\newcommand{\ip}[2]{\left\langle #1, #2 \right\rangle}
\newcommand{\abs}[1]{\left| #1 \right|}
\newcommand{\grad}{\nabla}
\newcommand{\ba}{\boldsymbol{a}}
\newcommand{\bb}{\boldsymbol{b}}
\newcommand{\bell}{\boldsymbol{\ell}}
\newcommand{\bg}{\boldsymbol{g}}
\newcommand{\bx}{\boldsymbol{x}}
\newcommand{\bu}{\boldsymbol{u}}
\newcommand{\by}{\boldsymbol{y}}
\newcommand{\bz}{\boldsymbol{z}}
\newcommand{\bv}{\boldsymbol{v}}
\newcommand{\bbeta}{\boldsymbol{\beta}}
\newcommand{\argmin}{\mathop{\mathrm{argmin}}}
\newcommand{\argmax}{\mathop{\mathrm{argmax}}}
\newcommand{\field}[1]{\mathbb{#1}}
\newcommand{\R}{\field{R}}
\newcommand{\btheta}{\boldsymbol{\theta}}
\newcommand{\norm}[1]{\left\|{#1}\right\|}
\newcommand{\sign}{{\rm sign}}
\DeclareMathOperator*{\Exp}{\mathbf{E}}
\newtheorem{theorem}{Theorem}
\newtheorem{lemma}[theorem]{Lemma}
\newtheorem{definition}[theorem]{Definition}
\title{Parameter-free Stochastic Optimization of Variationally Coherent Functions}
\author{
 Francesco Orabona\\
 Boston University, Boston, MA\\
 \url{francesco@orabona.edu}
 \and
 D\'avid P\'al\\
 New York, NY\\
 \url{davidko.pal@gmail.com}\\
}
\begin{document}

\maketitle

\begin{abstract}%
We design and analyze an algorithm for first-order stochastic optimization of a large class of functions on $\R^d$. In particular, we consider the \emph{variationally coherent} functions which can be convex or non-convex. The iterates of our algorithm on variationally coherent functions converge almost surely to the global minimizer $\bx^*$.
Additionally, the very same algorithm with the same hyperparameters, after $T$ iterations guarantees on convex functions that the expected suboptimality gap is bounded by $\widetilde{O}(\norm{\bx^* - \bx_0} T^{-1/2+\epsilon})$ for any $\epsilon>0$. It is the first algorithm to achieve both these properties at the same time.
Also, the rate for convex functions essentially matches the performance of parameter-free algorithms.
Our algorithm is an instance of the Follow The Regularized Leader algorithm with the added twist of using \emph{rescaled gradients} and time-varying linearithmic regularizers.
\end{abstract}

\section{Introduction}

We consider the problem of finding the minimizer of a differentiable function
$F:\R^d \to \R$ using access only to noisy gradients of the function. This is a
fundamental problem in stochastic optimization and machine learning. Indeed, a
plethora of algorithms have been proposed to solve this problem, some of them
being optimal with respect to some measure \citep[see, e.g.,][]{BottouCN16}.
However, the choice of an algorithm crucially depends on the assumptions on the
function $F$.

In an effort to go beyond convex functions, we focus on \emph{variationally
coherent} functions. Variationally coherent
functions~\citep{ZhouMBBG17,ZhouMBBG20} are defined by the property that, at any
point $\bx$, the vector pointing towards the optimal solution $\bx^*$ and the
negative gradient form an angle of at most 90 degrees. \citet{ZhouMBBG20} proved
that this class contains convex, quasi-convex,
$\tau$-star-convex~\citep{JoulaniGS17}, and pseudo-convex functions.

Smooth variationally coherent functions can be asymptotically minimized by
Stochastic Gradient Descent (SGD) with learning rates proportional to
$t^{-\alpha}$ where $t$ is the iteration number and $\alpha >
\tfrac{1}{2}$~\citep{ZhouMBBG20}. If the function happens to be also convex, SGD
with the same learning rate guarantees a convergence rate of $O
\left(\tfrac{1+\norm{\bx^* - \bx_0}^2}{T^{1-\alpha}} \right)$, where $\bx_0$ is
the initial point.

However, for convex functions a significantly better convergence rate
$O\left(\tfrac{1+\norm{\bx^* - \bx_0} \sqrt{\ln(1 + \norm{\bx^* -
\bx_0})}}{\sqrt{T}}\right)$ can be achieved by using the so-called
\emph{parameter-free}
algorithms~\citep[e.g.,][]{OrabonaP16,McMahanO14,CutkoskyO18} for online convex
optimization and averaging their iterates. Specifically, parameter-free
algorithms have better dependency on $\norm{\bx^* - \bx_0}$, that can be
arbitrarily large. Unfortunately, parameter-free algorithms are not known to
work for non-convex functions.

In this paper, we design a new parameter-free algorithm for convex functions
with bounded stochastic gradients that achieves convergence rate $O
\left(\tfrac{1+\norm{\bx^* - \bx_0} \ln (1 + \norm{\bx^* - \bx_0})}{T^{1-\alpha}}
\right)$ for $\alpha> \tfrac{1}{2}$ and \emph{at the same time} guarantees
asymptotic almost sure convergence to $\bx^*$ for variationally coherent
functions. No averaging of the iterates is required, we can guarantee
convergence directly for the last iterate. As far as we know, our algorithm is
the first of this kind.

Our algorithm is based on Follow The Regularized Leader (FTRL) algorithm with
time-varying linearithmic regularizer and \emph{rescaled gradients}. The
regularizer we use is similar to those used in other parameter-free algorithms.
Both FTRL and the regularizer are essential to guarantee the better dependency
of the convergence rate on $\norm{\bx^* - \bx_0}$ for convex functions. On the
other hand, rescaling of the gradients is needed to guarantee convergence for
variationally coherent functions and it is reminiscent of SGD. So our algorithm
can be viewed as a novel combination of FTRL and SGD and it might be of
independent interest.

The rest of the paper is organized as follows. In
Section~\ref{section:related-work}, we discuss related work. In
Section~\ref{section:def}, we formally define the problem and the class of
variationally coherent functions. The algorithm and the main results are stated
in Section~\ref{section:main-results}. In Section~\ref{section:ftrl}, we prove
basic properties of FTRL with rescaled gradients. In
Section~\ref{section:regularizer}, we present the time-varying regularizer and
its basic properties. Section~\ref{section:proofs} contains the proofs of the
main results. However, due to space limitations many supporting lemmas and their
proofs are deferred to appendices. Finally, in Section~\ref{section:conclusions}
we conclude the paper with discussion on limitations and future work.

\section{Related Work}
\label{section:related-work}

Follow The Regularized Leader (FTRL) was introduced as an algorithm for online
convex optimization (OCO) on linearized losses by \cite{ShalevS06,ShalevS07,Shalev-Shwartz07}. The name of
the algorithm comes from \citet{AbernethyHR08}. For offline
optimization, FTRL with linearized losses was introduced under the name Dual Averaging (DA) by\footnote{Note
that this paper by Nesterov is actually from 2005:
\url{https://papers.ssrn.com/sol3/papers.cfm?abstract_id=912637}}
\citet{Nesterov09}, with the main motivation of using non-decreasing weights for the (sub)gradients, contrary to Mirror Descent (MD)~\citep{NemirovskyY83}. Hence, even if the general scheme in \citet{Nesterov09} would support generic weights, DA in the stochastic setting is used with uniform weights. \citet{JuditskyKM20} propose another way to merge aspects of MD and DA. Also, they do not allow for generic time-varying regularizers that are essential here.

Parameter-free algorithms for OCO were introduced by
\citet{Orabona13,Orabona14}.  However, the seed of these ideas was already present in
\citet{StreeterM12,McMahanJ13}. The same ideas were developed in parallel for
the problem of the learning with expert
advice~\citep{ChaudhuriYH09,ChernovV10,LuoS15,KoolenVE15}. In fact, the name
\emph{parameter-free} originated in \citet{ChaudhuriYH09}. It is now clear that
these two approaches are fundamentally the same~\citep{OrabonaP16}. Our regularizers are inspired to the ones in \citet{KoolenVE15}, but with a max rather than a prior over $\beta$ (see \eqref{equation:psi-star-definition}) that gives simple closed forms.

Parameter-free algorithm can easily be used in the stochastic setting through
online-to-batch conversion~\citep{Cesa-bianchiCG02}. In the OCO setting, regret of parameter-free algorithms have optimal
dependency on $\norm{\bx^* - \bx_0}$, while regret of online gradient descent
has provably suboptimal dependency on $\norm{\bx^* - \bx_0}$
\citep{StreeterM12,CutkoskyB17}. It is not known if parameter-free algorithms
are optimal for stochastic optimization of convex Lipschitz functions, but it is
reasonable to assume that lower bound from \citet{StreeterM12} can be extended to
the stochastic optimization setting as well. As far as we know, there are no
stochastic optimization algorithms that achieves the convergence rate of
parameter-free algorithms for convex functions and guarantee convergence for
variationally coherent functions.

\citet{Zhang04b,ShamirZ13} proved convergence of the last iterate of SGD for
convex Lipschitz functions. However, the analysis critically relies on the
assumption of bounded domain. \citet{Orabona20} proved the convergence
of the last iterate of SGD on unbounded domains. We are not aware of other
proofs of convergence of last iterate of FTRL-based parameter-free algorithms
without changing the update rule \citep[e.g.,][]{Cutkosky19}.

Variationally coherent functions were introduced by~\cite{ZhouMBBG17,ZhouMBBG20}.
\citet{ZhouMBBG20} also points out the connection between variationally coherent functions and variational inequalities.
For this class of functions, \citet{ZhouMBBG17} proved almost
sure convergence for DA, without assuming a unique minimizer, but assuming Lipschitz gradients.
The classic analysis of \citet{Bottou98} used essentially the same definition and proved almost sure
convergence of SGD for smooth variationally coherent functions. Both definitions
can be traced back to the concept of \emph{pseudogradients} introduced by
\cite{PolyakT73}. They defined a pseudogradient of a function $F$ at a point
$\bx$ as any vector $\bg$ such that $\ip{\grad F(\bx)}{\Exp[\bg]} \ge 0$.
%Geometric interpretation of the condition is that the expected pseudogradient
%forms an acute angle with the gradient of the function.
They also introduced the
idea of having a pseudogradient of a surrogate objective function. In
particular, they considered the surrogate function
$\widetilde{F}(\bx)=\frac{1}{2} \norm{\bx^* - \bx}^2_2$, where $\bx^*$ is the
minimizer of $F$, so that the pseudogradient condition becomes $\ip{\Exp[\bg]}{\bx
-\bx^*} \ge 0$.

% The idea of having an algorithm that adapts to the underlying characteristic
% of the optimization problem without any change in its hyperparameters is a
% natural one. In the online convex optimization (OCO) community, people have
% designed algorithms that adapt, for example, to the sum of the squared norm of
% the gradients~\citep{McMahanS10,DuchiHS11}, scale~\citep{OrabonaP18}, strong
% convexity~\citep{vanErvenK16, CutkoskyO18}. In the offline optimization
% literature, this idea is sometimes called
% \emph{universality}~\citep{Nesterov15b}.

\section{Problem Setup and Notation}
\label{section:def}

\paragraph{Problem Setup}
We consider a model in which an algorithm has access to a stochastic first-order
oracle for a differentiable function $F:\R^d \to \R$. In each round
$t=1,2,\dots$, the algorithm computes an iterate $\bx_t \in \R^d$. The oracle
produces a stochastic gradient $\bg_t \in \R^d$ such that
\begin{equation}
\label{equation:stochastic-gradient}
\Exp \left[\bg_t ~\middle|~ \bx_1, \bx_2, \dots, \bx_t, \bg_1, \bg_2, \dots, \bg_{t-1} \right] = \grad F(\bx_t) \: .
\end{equation}
The iterate $\bx_t$ produced by the algorithm depends on the past gradients
$\bg_1, \bg_2, \dots, \bg_{t-1}$ and past iterates $\bx_1, \bx_2, \dots,
\bx_{t-1}$. Thus, even if the algorithm is deterministic, both $\bg_t$ and
$\bx_t$ are random variables. We denote by $\mathcal{F}_t$ the $\sigma$-algebra
generated by $\bx_1, \bx_2, \dots, \bx_t, \bg_1, \bg_2, \dots, \bg_{t-1}$. Using
this notation the condition \eqref{equation:stochastic-gradient} can be written
as
$$
\Exp \left[\bg_t ~\middle|~ \mathcal{F}_t \right] = \grad F(\bx_t) \: .
$$
The goal of the algorithm is to approach the minimizer
$\bx^*$ of $F$, that is,
\[
\lim_{t \to \infty} \bx_t = \bx^* \qquad \text{almost surely} \: .
\]
We make the additional assumption that
\begin{equation}
\label{equation:gradient-bound}
\norm{\bg_t} \le G \qquad \text{almost surely}
\end{equation}
where $G$ is a positive number. The assumption \eqref{equation:gradient-bound}
implies that the function $F$ is $G$-Lipschitz.

We design and analyze an algorithm for two classes of functions. The first class
is the class of differentiable convex functions with a minimizer $\bx^*$
(possibly not unique). The second class consists of variationally coherent
functions. Essentially the same class was studied already by~\citet{ZhouMBBG17}
and \citet{Bottou98}.
The class contains non-convex functions, see \citet{ZhouMBBG17} for examples.
\begin{definition}[Variatonally coherent function]
A function $F:\R^d \to \R$ is called \emph{variationally coherent} if
it is continuously differentiable, has a unique minimizer $\bx^*$, satisfies
\begin{equation}
\label{equation:variationally-coherent}
\ip{\grad F(\bx)}{\bx - \bx^*} \ge 0 \qquad \text{for all $\bx \in \R^d$}
\end{equation}
and the equality in \eqref{equation:variationally-coherent} holds if only if
$\bx = \bx^*$.
\end{definition}
We remark that \eqref{equation:variationally-coherent} is satisfied if $F$ is a
convex differentiable function with a minimizer $\bx^*$. Furthermore, any
function that is continuously differentiable and strictly convex with a unique
minimizer is necessarily variationally coherent.

\paragraph{Notation}
We denote by $\ip{\bx}{\by}$ the standard inner product on $\R^d$ and
$\norm{\bx} = \sqrt{\ip{\bx}{\bx}}$ is the Euclidean norm. \emph{Fenchel
conjugate} of a function $f:\R^d \to \R \cup \{-\infty, +\infty\}$ is the
function $f^*:\R^d \to \R \cup \{-\infty,+\infty\}$ defined as $f^*(\btheta) = \sup_{\bx \in \R^d} \ip{\btheta}{\bx} - f(\bx)$.
\emph{Bregman divergence} associated with a differentiable function $f:\R^d \to \R$ is
the function $B_f:\R^d \times \R^d \to \R$ defined as
$B_f(\bx, \by) = f(\bx) - f(\by) - \ip{\grad f(\by)}{\bx - \by}$.
Bregman divergence associated with a convex differentiable function is
non-negative. We will use this property throughout the paper.

\section{Main Results}
\label{section:main-results}

We propose the following algorithm for our setting. The algorithm is a Follow
The Regularized Leader (FTRL) algorithm with a particular sequence of
regularizers operating on the sequence of \emph{rescaled gradients} $\bell_t =
\eta_t \bg_t$, $t=1,2,\dots$. Section~\ref{section:ftrl} gives a detailed
explanation of FTRL. We call the scale factors $\eta_1, \eta_2, \dots$
\emph{learning rates}.

\begin{algorithm}[H]
\caption{\textsc{FTRL with rescaled gradients and linearithmic regularizer}}
\label{algorithm:ftrl-rescaled-gradients-exponetial-potential}
\begin{algorithmic}[1]
{
\REQUIRE{Initial point $\bx_0 \in \R^d$, sequence of learning rates $\eta_1, \eta_2, \dots$}
\STATE{Initialize $S_0^2=4, Q_0=0, \btheta_0 = \boldsymbol{0}$}
\FOR{$t=1,2,\dots$}
\STATE{Output
\[
\bx_t \leftarrow \bx_0+\begin{cases}
\dfrac{\norm{\btheta_{t-1}}}{2S_{t-1}^2}\exp\left( \dfrac{\norm{\btheta_{t-1}}^2}{4S_{t-1}^2} - Q_{t-1} \right) &   \text{if $\norm{\btheta} \le S_{t-1}^2$,} \\[0.5cm]
\dfrac{\btheta_{t-1}}{2\norm{\btheta_{t-1}}}\exp\left( \dfrac{\norm{\btheta_{t-1}}}{2} - \frac{1}{4} S_{t-1}^2 - Q_{t-1} \right) &  \text{if $\norm{\btheta} > S_{t-1}^2$.}
\end{cases}
\]}
\STATE{Receive stochastic gradient $\bg_t \in \R^d$ such that $\Exp \left[\bg_t ~\middle|~ \mathcal{F}_t \right] = \nabla F(\bx_t)$ and $\norm{\bg_t} \le G$}
\STATE{Compute rescaled gradient $\bell_t \leftarrow \eta_t \bg_t$}
\STATE{Update $S_t^2 \leftarrow S_{t-1}^2 + \norm{\bell_t}^2$}
\STATE{Update $Q_t \leftarrow Q_{t-1} + \frac{\norm{\bell_t}^2}{S_t}$}
\STATE{Update sum of negative rescaled gradients $\btheta_t \leftarrow \btheta_{t-1} - \bell_t$}
\ENDFOR
}
\end{algorithmic}
\end{algorithm}

We prove three results about the algorithm. Their proofs can be found in
Section~\ref{section:proofs} and the proofs of supporting lemmas in Appendices
\ref{section:variationally-coherent}, \ref{section:convex-case}, and
\ref{section:last-iterate}.
Theorem~\ref{theorem:variationally-coherent-convergence} states that, under
general assumptions on the sequence of learning rates, the iterates $\bx_1,
\bx_2, \dots$ converge to the minimizer $\bx^*$ provided that $F$ is
variationally coherent. Theorems~\ref{theorem:convex-average-non-adaptive} and
\ref{theorem:convex-last-iterate} are $\widetilde{O}(G T^{-1/2 + \epsilon}
\norm{\bx^*-\bx_0})$ bounds on the speed of convergence of the function values to the
optimal value $F(\bx^*)$ provided that $F$ is convex.
Theorem~\ref{theorem:convex-average-non-adaptive} applies to the running average
of the iterates. Theorem~\ref{theorem:convex-last-iterate} applies to iterates
directly. The last two theorems require a particular sequence of learning rates
$\eta_t = \frac{1}{G t^{\alpha}}$, where $\alpha \in (\frac{1}{2}, 1)$.

\begin{theorem}[$\bx_t$ converges to $\bx^*$]
\label{theorem:variationally-coherent-convergence}
Let $F:\R^d \to \R$ be a variationally coherent function with minimizer $\bx^*$.
Assume the stochastic gradients satisfy \eqref{equation:stochastic-gradient} and
\eqref{equation:gradient-bound}. Assume that learning rate $\eta_t$ is a
non-negative $\mathcal{F}_t$-measurable random variable, $t=1,2,\dots$, and
there exists a real number $\gamma > 0$ such that
\begin{align}
\label{equation:learning-rates-assumption-1}
\sum_{t=1}^\infty \eta_t^2 \norm{\bg_t}^2 & < \gamma  & \text{almost surely,} \\
\label{equation:learning-rates-assumption-2}
\sum_{t=1}^\infty \eta_t & = + \infty  & \text{almost surely,} \\
\label{equation:learning-rates-assumption-3}
\eta_t & \le \frac{1}{G} & \text{for all $t=1,2,\dots$ almost surely.}
\end{align}
Then, the sequence $\bx_1, \bx_2, \dots$ generated by
Algorithm~\ref{algorithm:ftrl-rescaled-gradients-exponetial-potential} satisfies
$$
\lim_{t \to \infty} \bx_t = \bx^* \qquad \text{almost surely.}
$$
\end{theorem}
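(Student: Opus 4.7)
The proof is a stochastic Lyapunov argument in the spirit of Polyak--Tsypkin, built on top of the FTRL regret identity developed in Section~\ref{section:ftrl}. Setting the comparator to $\bx^*$ and substituting $\bell_t = \eta_t \bg_t$, that identity should provide, for each $t$, an inequality of the form
\[
\langle \bell_t, \bx_t - \bx^*\rangle \le V_{t-1} - V_t + R_t,
\]
where $V_t$ is a non-negative Lyapunov function built from $\psi_t^*(\btheta_t)$ and the linear piece $\langle \btheta_t, \bx_0 - \bx^*\rangle$, and $R_t$ is a per-round ``stability'' term which, by the design of the linearithmic regularizer in Section~\ref{section:regularizer}, is controlled by a constant times $\|\bell_t\|^2/S_t^2$ (or a similar quantity). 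Hence $\sum_t R_t < \infty$ almost surely by assumption \eqref{equation:learning-rates-assumption-1}.

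Conditioning on $\mathcal{F}_t$, the relation $\Exp[\bg_t \mid \mathcal{F}_t] = \nabla F(\bx_t)$ and variational coherence give
\[
\Exp\bigl[\langle \bell_t, \bx_t - \bx^*\rangle \,\big|\, \mathcal{F}_t\bigr] = \eta_t \langle \nabla F(\bx_t), \bx_t - \bx^*\rangle \ge 0.
\]
Plugging this into the per-round bound yields a genuine supermartingale-with-errors recursion for $V_t$, which is exactly the setting of the Robbins--Siegmund quasi-supermartingale convergence theorem. I would apply it to obtain, almost surely, (i) $V_t$ converges to a finite limit, and (ii) $\sum_{t=1}^\infty \eta_t \langle \nabla F(\bx_t), \bx_t - \bx^*\rangle < \infty$. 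Combined with \eqref{equation:learning-rates-assumption-2} this forces $\liminf_{t\to\infty} \langle \nabla F(\bx_t), \bx_t - \bx^*\rangle = 0$ almost surely. Convergence of $V_t$, together with the growth estimate for $\psi_t^*$ established in Section~\ref{section:regularizer}, gives almost sure boundedness of $\{\bx_t\}$. Any cluster point $\bar{\bx}$ then satisfies $\langle \nabla F(\bar{\bx}), \bar{\bx} - \bx^*\rangle = 0$ by continuity of $\nabla F$, and the strictness clause in the definition of variational coherence identifies $\bar{\bx} = \bx^*$ as the unique cluster point.

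The main obstacle is promoting ``unique cluster point'' to actual sequential convergence $\bx_t \to \bx^*$. The plan is to show that $\|\bx_{t+1} - \bx_t\| \to 0$ almost surely, so that a bounded sequence with vanishing increments and a unique cluster point must converge. That $\|\bell_t\| = \eta_t\|\bg_t\| \to 0$ follows from \eqref{equation:learning-rates-assumption-1} as the terms of an a.s.\ convergent series. Turning this into $\|\bx_{t+1}-\bx_t\|\to 0$ is non-trivial because the FTRL map $\btheta \mapsto \bx_0 + \nabla \psi_t^*(\btheta)$ is time-varying through $S_t^2$ and $Q_t$, so one must separately estimate both a local Lipschitz constant of the map at fixed $t$ and the drift of the map between consecutive rounds. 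I expect both estimates to be accessible from the explicit two-case closed form in Algorithm~\ref{algorithm:ftrl-rescaled-gradients-exponetial-potential} and the regularizer analysis of Section~\ref{section:regularizer}, and they are presumably packaged as supporting lemmas in Appendix~\ref{section:variationally-coherent}. This continuity/drift control is the technically delicate part; everything upstream is a fairly direct composition of variational coherence with Robbins--Siegmund applied to an FTRL Lyapunov.
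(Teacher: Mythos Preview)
Your supermartingale setup is essentially the paper's: the Lyapunov function is $Y_t=B_{\phi_t}(\bx^*,\bx_t)+\phi_\infty(\bx^*)-\phi_t(\bx^*)$ (Lemma~\ref{lemma:bregman-convergence}), and the key inequality (Lemma~\ref{lemma:key-inequality}) makes the per-round remainder non-positive rather than merely summable, so plain non-negative supermartingale convergence already gives (i) $B_{\phi_t}(\bx^*,\bx_t)\to B_\infty<\infty$ a.s.\ and (ii) $\sum_t \eta_t\langle\nabla F(\bx_t),\bx_t-\bx^*\rangle<\infty$ a.s.

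The genuine gap is in your passage from $\liminf$ to ``any cluster point''. From $\liminf_t \langle\nabla F(\bx_t),\bx_t-\bx^*\rangle=0$ and boundedness you can conclude that $\bx^*$ is \emph{a} cluster point, but not that every cluster point $\bar\bx$ satisfies $\langle\nabla F(\bar\bx),\bar\bx-\bx^*\rangle=0$: continuity of $\nabla F$ only carries the inner product to the limit along a subsequence on which it already tends to $0$, and nothing prevents other subsequences from converging to points with strictly positive inner product. Vanishing increments do not close this: they would make the cluster set connected, but connectedness plus ``contains $\bx^*$'' does not force the set to be $\{\bx^*\}$. Moreover, the $\|\bx_{t+1}-\bx_t\|\to 0$ estimate you anticipate is neither proved in Appendix~\ref{section:variationally-coherent} nor used anywhere in the paper.

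The paper avoids cluster points entirely by extracting a \emph{two-sided} control from the Lyapunov convergence. Lemma~\ref{lemma:iterates-squeezed} produces random constants $0<C_1\le C_2<\infty$ with
\[
C_1\|\bx_t-\bx^*\|^2 \;\le\; B_{\phi_t}(\bx^*,\bx_t) \;\le\; C_2\|\bx_t-\bx^*\|^2 .
\]
If $B_\infty>0$, then for all large $t$ one has $\tfrac12 B_\infty\le B_{\phi_t}(\bx^*,\bx_t)\le 2B_\infty$, so the entire tail of $(\bx_t)$ lies in the compact annulus $\{\bx:\sqrt{B_\infty/(2C_2)}\le\|\bx-\bx^*\|\le\sqrt{2B_\infty/C_1}\}$. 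By continuity of $\nabla F$ and strict coherence, $\langle\nabla F(\bx),\bx-\bx^*\rangle\ge\delta>0$ uniformly on this annulus, whence $\sum_{t\ge T_0}\eta_t\langle\nabla F(\bx_t),\bx_t-\bx^*\rangle\ge\delta\sum_{t\ge T_0}\eta_t=\infty$, contradicting (ii). Hence $B_\infty=0$, and the lower squeeze bound gives $\|\bx_t-\bx^*\|\to 0$ directly. The point you are missing is the \emph{lower} half of the squeeze: you use the Lyapunov only to get boundedness (the upper half), whereas the paper also uses it to keep the tail bounded \emph{away} from $\bx^*$ in the contradiction branch.
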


The assumption that $\eta_t$ is $\mathcal{F}_t$-measurable means that $\eta_t$
is an arbitrary function of $\bg_1, \bg_2, \dots \bg_{t-1}$ and $\bx_1, \bx_2,
\dots \bx_t$. This way, $\Exp[\eta_t \bg_t ~|~ \mathcal{F}_t] = \eta_t
\Exp[\bg_t ~|~ \mathcal{F}_t] = \eta_t \grad F(\bx_t)$. Importantly, $\eta_t$
cannot depend on $\bg_t$, see also discussion in \citet{LiO19}. Assumptions
\eqref{equation:learning-rates-assumption-1} and
\eqref{equation:learning-rates-assumption-2} are essentially the same as the
assumptions used in the convergence results for stochastic gradient descent
algorithm~\citep{RobbinsM51}. Assumption \eqref{equation:learning-rates-assumption-1} means that
$\sum_{t=1}^\infty \eta_t^2 \norm{\bg_t}^2$ as a random variable is bounded.
Assumption \eqref{equation:learning-rates-assumption-3} ensures that
$\norm{\bell_t} \le 1$ which is important for the underlying FTRL algorithm.
Note that learning rate $\eta_t = \frac{1}{G t^{\alpha}}$, where $\alpha \in
(\frac{1}{2}, 1)$, satisfies all these assumptions.

\begin{theorem}[Convergence rate of running average for convex functions]
\label{theorem:convex-average-non-adaptive}
Let $F:\R^d \to \R$ be a convex differentiable function with a (possibly
non-unique) minimizer $\bx^*$. Let $\alpha \in (\frac{1}{2}, 1)$. Suppose the
stochastic gradients satisfy \eqref{equation:stochastic-gradient} and
\eqref{equation:gradient-bound}.
Algorithm~\ref{algorithm:ftrl-rescaled-gradients-exponetial-potential} with
learning rate  $\eta_t = \frac{1}{Gt^\alpha}$ satisfies for all $T \ge 1$
\[
\Exp\left[ F(\overline{\bx}_T) \right] 
\le F(\bx^*) + \frac{G}{T^{1-\alpha}} \left(1 + \sqrt{5 + \frac{1}{2\alpha - 1}} \norm{\bx^*-\bx_0} \left[ 2\ln(1 + 2 \norm{\bx^*-\bx_0}) + 9 \sqrt{5 + \frac{1}{2\alpha -1}} \right]\right) \: ,
\]
where $\overline{\bx}_T = \frac{1}{T} \sum_{t=1}^T \bx_t$ is the running average
of the iterates.
\end{theorem}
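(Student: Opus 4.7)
The plan is to reduce the convergence rate of the running average to the expected regret of the underlying FTRL on the rescaled gradient sequence, and then to invoke the regret bound delivered by Section~\ref{section:ftrl} with the linearithmic regularizer of Section~\ref{section:regularizer}. First I would use convexity of $F$ together with Jensen's inequality on the running average $\overline{\bx}_T$ to obtain
\[
\Exp[F(\overline{\bx}_T)] - F(\bx^*) \;\le\; \frac{1}{T}\sum_{t=1}^{T}\Exp[F(\bx_t) - F(\bx^*)] \;\le\; \frac{1}{T}\Exp\Bigl[\sum_{t=1}^{T}\ip{\grad F(\bx_t)}{\bx_t - \bx^*}\Bigr],
\]
so the task reduces to bounding the expected sum of the \emph{non-negative} quantities $\ip{\grad F(\bx_t)}{\bx_t - \bx^*}$.

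The key manipulation is a summation by parts that converts this sum into one to which the FTRL regret bound directly applies. I set $D_t := \sum_{s=1}^{t}\eta_s \ip{\grad F(\bx_s)}{\bx_s - \bx^*}$; by convexity this sequence is non-negative and non-decreasing in $t$, and $1/\eta_t = G t^\alpha$ is non-decreasing as well. Abel's identity then yields
\[
\sum_{t=1}^{T}\ip{\grad F(\bx_t)}{\bx_t - \bx^*} = \sum_{t=1}^{T}\frac{D_t - D_{t-1}}{\eta_t} = GT^\alpha D_T - G\sum_{t=1}^{T-1}\bigl((t+1)^\alpha - t^\alpha\bigr) D_t \;\le\; GT^\alpha D_T,
\]
since the subtracted quantity is non-negative. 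Taking expectation and using that $\bx_t$ and $\eta_t$ are $\mathcal{F}_t$-measurable together with $\Exp[\bg_t \mid \mathcal{F}_t] = \grad F(\bx_t)$, the tower property lets me rewrite $\Exp[D_T]$ as $\Exp\bigl[\sum_{t=1}^{T}\ip{\bell_t}{\bx_t - \bx^*}\bigr]$, which is precisely the expected FTRL regret against the comparator $\bx^*$ on the rescaled gradient sequence $\bell_1,\dots,\bell_T$. Combining gives
\[
\Exp[F(\overline{\bx}_T)] - F(\bx^*) \;\le\; \frac{G}{T^{1-\alpha}}\,\Exp\Bigl[\sum_{t=1}^{T}\ip{\bell_t}{\bx_t - \bx^*}\Bigr].
\]

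The last step is to plug in the FTRL regret bound from Section~\ref{section:ftrl} for the linearithmic regularizer of Section~\ref{section:regularizer}. Because $\norm{\bell_t} \le \eta_t G = t^{-\alpha}$ and $\alpha > 1/2$, the scale variable stays uniformly bounded: $S_T^2 = 4 + \sum_{t=1}^{T}\norm{\bell_t}^2 \le 5 + \tfrac{1}{2\alpha-1}$, which explains the $\sqrt{5 + 1/(2\alpha-1)}$ constant in the theorem, and $Q_T$ is similarly controlled. The main obstacle is exactly this last step: obtaining a regret bound whose dependence on $\norm{\bx^* - \bx_0}$ is only $\norm{\bx^*-\bx_0}\ln(1+\norm{\bx^*-\bx_0})$ rather than polynomial is precisely the parameter-free property, and is what forces the piecewise exponential form of the dual visible in Algorithm~\ref{algorithm:ftrl-rescaled-gradients-exponetial-potential}; matching the exact constants $2\ln(1+2\norm{\bx^*-\bx_0})$ and $9\sqrt{5+1/(2\alpha-1)}$ of the stated bound is then routine bookkeeping of the Bregman-divergence terms, handled by the supporting lemmas in Appendix~\ref{section:convex-case}.
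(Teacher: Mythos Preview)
Your proposal is correct and follows essentially the same approach as the paper: convexity plus Jensen to reduce to a weighted regret, Lemma~\ref{lemma:regret-bound} to bound $\sum_t \Exp[\eta_t\ip{\grad F(\bx_t)}{\bx_t-\bx^*}]$ by $1+\Exp[\phi_{T+1}(\bx^*)]$, and Lemma~\ref{lemma:bound-S-Q-psi-non-adaptive} for the explicit constants. The only notable difference is that your Abel summation step is more elaborate than needed---the paper simply uses $\eta_T \le \eta_t$ together with $F(\bx_t)-F(\bx^*)\ge 0$ to write $\eta_T\sum_t \Exp[F(\bx_t)-F(\bx^*)] \le \sum_t \eta_t\,\Exp[F(\bx_t)-F(\bx^*)]$, which is exactly what your summation by parts delivers after dropping the non-negative subtracted term.
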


\begin{theorem}[Convergence rate of last iterate for convex functions]
\label{theorem:convex-last-iterate}
Let $F:\R^d \to \R$ be a convex differentiable function with a (possibly
non-unique) minimizer $\bx^*$. Let $\alpha \in (\frac{1}{2}, 1)$. Suppose the
stochastic gradients satisfy \eqref{equation:stochastic-gradient} and
\eqref{equation:gradient-bound}.
Algorithm~\ref{algorithm:ftrl-rescaled-gradients-exponetial-potential} with
learning rate  $\eta_t = \frac{1}{Gt^\alpha}$ satisfies for all $T \ge 1$
\begin{multline*}
\Exp[F(\bx_T)] - F(\bx^*) \le \frac{G}{T^{1-\alpha}} \left( 2 + \frac{1}{e(2\alpha - 1)} \right) \left( \exp(S) + 3\norm{\bx^*-\bx_0} \right.  \\
\left. + 6(S + 2) \left(2 + S \norm{\bx^*-\bx_0} \left[ 2\ln(1 + 2 \norm{\bx^*-\bx_0}) + 9 S \right] \right) \right) \: ,
\end{multline*}
where $S =  \sqrt{5 + \frac{1}{2\alpha - 1}}$ and $e = 2.718\dots$ is Euler's
constant.
\end{theorem}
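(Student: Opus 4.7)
The plan is to reduce the last-iterate bound to the running-average bound of Theorem~\ref{theorem:convex-average-non-adaptive} via a suffix-averaging telescope in the style of \citet{ShamirZ13}, applied on top of a suffix-regret bound for our FTRL algorithm. Introduce the suffix averages $S_k \defeq \frac{1}{T-k+1}\sum_{t=k}^T F(\bx_t)$, so that $S_T = F(\bx_T)$ and, by convexity, $S_1 \ge F(\overline{\bx}_T)$. Then write the telescope
\[
F(\bx_T) - F(\bx^*) \;=\; (S_1 - F(\bx^*)) \;+\; \sum_{k=1}^{T-1}(S_{k+1}-S_k).
\]
The first term is, in expectation, controlled directly by Theorem~\ref{theorem:convex-average-non-adaptive}, which already supplies the inner $(2 + S\norm{\bx^*-\bx_0}[2\ln(1+2\norm{\bx^*-\bx_0}) + 9S])$ factor. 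An elementary rearrangement gives $S_{k+1}-S_k = \frac{1}{(T-k+1)(T-k)}\sum_{t=k+1}^T (F(\bx_t)-F(\bx_k))$, so the whole burden is to bound this suffix sum.

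For each fixed $k$, convexity yields $F(\bx_t) - F(\bx_k) \le \ip{\grad F(\bx_t)}{\bx_t - \bx_k}$, and since $\bx_k$ is $\mathcal{F}_t$-measurable for $t > k$, the stochastic-gradient condition \eqref{equation:stochastic-gradient} lets us pass to $\Exp[\ip{\bg_t}{\bx_t - \bx_k}]$ under the expectation. Absorbing the learning rates $\eta_t$, the quantity to control becomes
\[
\textstyle \sum_{t=k+1}^T \tfrac{1}{\eta_t}\,\Exp\bigl[\ip{\bell_t}{\bx_t - \bx_k}\bigr],
\]
which I would handle by adapting the FTRL regret bound of Section~\ref{section:ftrl} to an arbitrary suffix $\{k+1,\dots,T\}$ with $\bx_k$ as comparator. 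Because $\bx_k$ itself is a random iterate produced by the algorithm, I would use the structure of the linearithmic regularizer from Section~\ref{section:regularizer} to express the suffix regret essentially as a difference of Fenchel-dual potentials $\psi_T^*(\btheta_T) - \psi_k^*(\btheta_k)$, together with a term measuring $\norm{\bx_k - \bx_0}$; here the exponential form of $\psi^*$ in \eqref{equation:psi-star-definition} is what produces the $\exp(S)$ and $\norm{\bx^* - \bx_0}$ contributions visible in the target bound.

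The main obstacle is the double randomness and non-constant comparator: the suffix-regret bound must hold with $\bx_k$ inside it, and its own deviation from $\bx^*$ must be controlled in expectation. My intended workaround is a two-step comparator swap — first bound $\ip{\bg_t}{\bx_t - \bx_k}$ by $\ip{\bg_t}{\bx_t - \bx^*} + \ip{\bg_t}{\bx^* - \bx_k}$, handle the first piece with the standard Theorem~\ref{theorem:convex-average-non-adaptive}-type dual bound on the suffix, and control the second piece using $\norm{\bg_t}\le G$ together with a high-probability bound on $\norm{\bx_k - \bx^*}$ coming from Theorem~\ref{theorem:convex-average-non-adaptive} applied at an earlier horizon. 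This is where I expect the factor $(S+2)$ multiplying the inner Theorem~\ref{theorem:convex-average-non-adaptive}-shaped expression to appear.

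Finally I would sum the weighted suffix contributions $\sum_{k=1}^{T-1}\frac{1}{(T-k+1)(T-k)}$, which collapses to a constant rather than a $\log T$ factor because of the telescoping structure; combined with the tail sum $\sum_{t\ge 1} \eta_t^2 = \sum_{t\ge 1} G^{-2} t^{-2\alpha}$, this yields the prefactor $\frac{1}{T^{1-\alpha}}(2 + \tfrac{1}{e(2\alpha-1)})$. Collecting the running-average term, the suffix-regret contribution, and the diameter control then gives the stated inequality. The technically delicate step I expect to have to fight with is precisely the suffix-regret bound with random comparator: verifying that the FTRL analysis in Section~\ref{section:ftrl} extends to intervals $[k+1,T]$ without introducing an $\mathcal{F}_k$-dependent term that fails to be handled by the tower property.
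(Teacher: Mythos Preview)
Your skeleton (suffix telescope plus suffix FTRL regret) is the right shape, but the unweighted Shamir--Zhang telescope you wrote does not mesh with this algorithm. The FTRL-with-rescaled-gradients analysis only controls $\eta_t$-weighted sums: Lemma~\ref{lemma:regret-bound} and its suffix version (Lemma~\ref{lemma:partial-regret}) bound $\sum_{t\ge A}\ip{\bell_t}{\bx_t-\bx_A}=\sum_{t\ge A}\eta_t\ip{\bg_t}{\bx_t-\bx_A}$, whereas after your telescope you are left with the unweighted $\sum_{t>k}\ip{\bg_t}{\bx_t-\bx_k}=\sum_{t>k}\tfrac{1}{\eta_t}\ip{\bell_t}{\bx_t-\bx_k}$. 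You cannot pass from one to the other without pulling out a factor $1/\eta_T=GT^{\alpha}$, which would kill the rate. The paper fixes this by using an $\eta_t$-\emph{weighted} telescope (Lemma~\ref{lemma:last-average}, from \citet{Orabona20}): with $q_t=\Exp[F(\bx_t)]-F(\bx^*)$ one gets $\eta_T q_T\le \tfrac{1}{T}\sum_t\eta_t q_t+\sum_k\tfrac{1}{k(k+1)}\sum_{t=T-k}^T\eta_t(q_t-q_{T-k})$, so the suffix term is exactly $\sum_{t\ge A}\eta_t\Exp[F(\bx_t)-F(\bx_A)]$, matching the FTRL partial regret.

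Your two-step comparator swap to $\bx^*$ is also a detour and does not resolve the weighting issue. The paper keeps $\bx_A$ as comparator throughout: Lemma~\ref{lemma:partial-regret} gives $\sum_{t\ge A}\ip{\bell_t}{\bx_t-\bx_A}\le\phi_{T+1}(\bx_A)-\phi_A(\bx_A)$ (a \emph{primal} regularizer difference, not a dual-potential one), and the key technical step you are missing is Lemma~\ref{lemma:difference-of-regularizers}, which shows this difference is at most $\max\{\tfrac12\exp(\tfrac14 S_T^2-Q_T),\norm{\bx_A}\}\cdot\sum_{t=A}^T\norm{\bell_t}^2$ and then bounds $\Exp[\norm{\bx_A}]$ via the Bregman inequality~\eqref{equation:regret-bound-3} and Lemma~\ref{lemma:iterate-bound}. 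This yields $\Exp[\phi_{T+1}(\bx_A)-\phi_A(\bx_A)]\le K\sum_{t=A}^T t^{-2\alpha}$ with $K$ carrying exactly the $\exp(S)$, $3\norm{\bx^*-\bx_0}$, and $(S+2)\Exp[2+\phi_\infty(\bx^*)]$ pieces you see in the statement. No high-probability control of $\norm{\bx_k-\bx^*}$ is needed; since $\bx_A$ is $\mathcal{F}_t$-measurable for $t\ge A$ the tower property handles the random comparator directly. The remaining double sum $\sum_k\tfrac{1}{k(k+1)}\sum_{t=T-k}^T t^{-2\alpha}$ is dispatched by Lemma~\ref{lemma:bound-sum-k}, which produces the $\bigl(2+\tfrac{1}{e(2\alpha-1)}\bigr)$ prefactor after multiplying through by $1/\eta_T$.
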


Theorem~\ref{theorem:convex-average-non-adaptive} and
\ref{theorem:convex-last-iterate} have the same assumptions.
Theorem~\ref{theorem:convex-last-iterate} is obviously stronger, however, its
proof is based on the one of Theorem~\ref{theorem:convex-average-non-adaptive}. For that reason we include both theorems.

\section{FTRL with Rescaled Gradients}
\label{section:ftrl}

As we said, Algorithm~\ref{algorithm:ftrl-rescaled-gradients-exponetial-potential} is a
special case of the FTRL\footnote{We prefer to use the name FTRL over DA because FTRL is more general: DA is a special case of FTRL when the losses are linear.} algorithm with
\emph{rescaled gradients} stated as
Algorithm~\ref{algorithm:ftrl-rescaled-gradients} below. The algorithm differs
from the standard FTRL algorithm in that gradients $\bg_t$ are rescaled by the
learning rate $\eta_t$. In other words,
Algorithm~\ref{algorithm:ftrl-rescaled-gradients} can be viewed as the standard
FTRL algorithm operating on the sequence $\bell_t = \eta_t \bg_t$,
$t=1,2,\dots$. As usual, the algorithm is specified by a sequence of functions
$\phi_1, \phi_2, \dots$ called \emph{regularizers}. The regularizer $\phi_t$ and
the learning rate $\eta_t$ can depend on the previous gradients $\bg_1, \bg_2,
\dots, \bg_{t-1}$. This way $\eta_t$ and $\phi_t$ are $\mathcal{F}_t$-measurable
random elements.

\begin{algorithm}
\caption{\textsc{FTRL with rescaled gradients}}
\label{algorithm:ftrl-rescaled-gradients}
\begin{algorithmic}[1]
{
\REQUIRE{Initial point $\bx_0\in R^d$, regularizers $\phi_1, \phi_2, \dots$, and learning rates $\eta_1, \eta_2, \dots$}
\STATE{Initialize $\btheta_0 = \boldsymbol{0}$}
\FOR{$t=1,2,\dots$}
\STATE{Predict $\bx_t \leftarrow \bx_0+\argmin_{\bx \in \R^d} \ \phi_t(\bx) - \ip{\btheta_{t-1}}{\bx}$}
\STATE{Receive gradient $\bg_t \in \R^d$}
\STATE{Compute rescaled gradient $\bell_t \leftarrow \eta_t \bg_t$}
\STATE{Update sum of negative rescaled gradients $\btheta_t \leftarrow \btheta_{t-1} - \bell_t$}
\ENDFOR
}
\end{algorithmic}
\end{algorithm}

In Section~\ref{section:regularizer} we present the sequence of regularizers that
gives rise to
Algorithm~\ref{algorithm:ftrl-rescaled-gradients-exponetial-potential} and we
make the derivation of the formulas used in algorithm.

Other choices of regularizers and learning rates are also possible. Two simple
special cases are worth mentioning. The first special case is the choice $\eta_t
= 1$ for all $t$ which recovers standard FTRL algorithm. The second special case
is the choice $\phi_t(\bx) = \frac{1}{2} \norm{\bx}^2$ for all $t$ which
recovers the standard stochastic/online gradient descent algorithm. However, in
general, the algorithm is neither FTRL, nor stochastic gradient descent, not
even online mirror descent algorithm.

\textbf{Remark} In the following, for simplicity of notation, we assume
$\bx_0=\boldsymbol{0}\in \R^d$. It is easy to obtain the results for any other
choice of $\bx_0$ with a simple translation of the coordinate system.

The analysis of both algorithms relies on Lemma~\ref{lemma:regret-bound}. The
proof of Lemma~\ref{lemma:regret-bound} uses
Lemma~\ref{lemma:ftrl-equality}~\citep{Orabona19}, we report its proof in
Appendix~\ref{section:ftrl-proofs} for completeness. Both lemmas are expressed
in terms of the objective function that FTRL minimizes in step $t$,
\begin{align}
\label{equation:ftrl-objective}
H_t(\bx) & = \phi_t(\bx) - \ip{\btheta_{t-1}}{\bx} = \phi_t(\bx) + \sum_{i=1}^{t-1} \ip{\bell_i}{\bx} && \text{for $t=1,2,\dots$} \: .
\end{align}

\begin{lemma}[FTRL regret equality]
\label{lemma:ftrl-equality}
Let $\bell_1, \bell_2, \dots, \bell_T \in \R^d$ and $\bx_t \in \argmin_{\bx \in
\R^d} \ H_t(\bx)$ where $H_t(\bx)$ is defined in
\eqref{equation:ftrl-objective}. Then, for any $\bu \in \R^d$,
\begin{equation}
\label{equation:ftrl-equality}
\sum_{t=1}^T \ip{\bell_t}{\bx_t - \bu}
= \phi_{T+1}(\bu) - \min_{\bx \in \R^d} \phi_1(\bx) + H_{T+1}(\bx_{T+1}) - H_{T+1}(\bu) + \sum_{t=1}^T [H_t(\bx_t) - H_{t+1}(\bx_{t+1}) + \ip{\bell_t}{\bx_t}] \: .
\end{equation}
\end{lemma}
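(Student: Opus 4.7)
The plan is to verify this identity essentially by rearrangement and telescoping, so no deep idea is required — the work is mostly bookkeeping. The only ``structural'' fact I will need about the iterates is that $\bx_1$ minimizes $\phi_1$, which follows since $\btheta_0 = \boldsymbol{0}$ forces $H_1(\bx) = \phi_1(\bx)$.

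First, I would handle the $\bu$-dependent part of the left-hand side. From the definition \eqref{equation:ftrl-objective}, $H_{T+1}(\bu) = \phi_{T+1}(\bu) + \sum_{t=1}^T \ip{\bell_t}{\bu}$, so
\[
-\sum_{t=1}^T \ip{\bell_t}{\bu} \;=\; \phi_{T+1}(\bu) - H_{T+1}(\bu).
\]
Substituting this into the left-hand side, the identity \eqref{equation:ftrl-equality} reduces to showing
\[
\sum_{t=1}^T \ip{\bell_t}{\bx_t}
\;=\; -\min_{\bx\in\R^d} \phi_1(\bx) \;+\; H_{T+1}(\bx_{T+1})
\;+\; \sum_{t=1}^T \big[H_t(\bx_t) - H_{t+1}(\bx_{t+1}) + \ip{\bell_t}{\bx_t}\big].
\]
The $\sum_t \ip{\bell_t}{\bx_t}$ terms cancel across the equation, so after that cancellation the statement becomes purely a claim about a telescoping sum:
\[
0 \;=\; -\min_{\bx\in\R^d}\phi_1(\bx) \;+\; H_{T+1}(\bx_{T+1}) \;+\; \sum_{t=1}^T \big[H_t(\bx_t) - H_{t+1}(\bx_{t+1})\big].
\]

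Second, I telescope: the inner sum equals $H_1(\bx_1) - H_{T+1}(\bx_{T+1})$, which cancels the $+H_{T+1}(\bx_{T+1})$ term, leaving only $-\min_\bx \phi_1(\bx) + H_1(\bx_1)$. Since $\btheta_0 = \boldsymbol{0}$ we have $H_1 = \phi_1$, and since $\bx_1 \in \argmin H_1$ by hypothesis, $H_1(\bx_1) = \min_\bx \phi_1(\bx)$, so the remaining expression is zero. This closes the proof.

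There is no real obstacle here: the identity is a purely algebraic one, and the only subtle step is noticing that the cleanest way to verify it is to separate the $\bu$-dependence out using the definition of $H_{T+1}$ and then telescope what remains. I would present it in the order above, i.e., first eliminate $\bu$, then cancel the $\ip{\bell_t}{\bx_t}$ terms on both sides, then telescope and apply the initial condition $\bx_1\in\argmin \phi_1$.
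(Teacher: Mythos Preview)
Your proof is correct and follows essentially the same approach as the paper: both arguments cancel the $\sum_t \ip{\bell_t}{\bx_t}$ terms, telescope $\sum_t[H_t(\bx_t)-H_{t+1}(\bx_{t+1})]$ to $H_1(\bx_1)-H_{T+1}(\bx_{T+1})$, and use $H_1(\bx_1)=\min_\bx\phi_1(\bx)$ together with the definition of $H_{T+1}(\bu)$ to finish. The only difference is the order in which these steps are carried out.
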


\begin{lemma}[FTRL for stochastic optimization]
\label{lemma:regret-bound}
Let $F:\R^d \to \R$ be any differentiable function. Suppose that for all $t \ge
1$, the gradient $\bg_t$ satisfies \eqref{equation:stochastic-gradient} and
\eqref{equation:gradient-bound} and the learning rate $\eta_t$ is a non-negative
$\mathcal{F}_t$-measurable random variable. Assume $\phi_1, \phi_2, \dots$ are convex
differentiable and for all $t \ge 1$ satisfy
\begin{equation}
\label{equation:key-inequality-assumption}
H_t(\bx_t) - H_{t+1}(\bx_{t+1}) + \ip{\bell_t}{\bx_t} \le 0 \: .
\end{equation}
Then, Algorithm~\ref{algorithm:ftrl-rescaled-gradients} satisfies for all $T \ge 0$ and all $\bu \in \R^d$,
\begin{equation}
\label{equation:regret-bound-1}
\Exp[B_{\phi_{T+1}}(\bu, \bx_t)] + \sum_{t=1}^T \Exp\left[ \eta_t \ip{\grad F(\bx_t)}{\bx_t - \bu} \right] \le \Exp\left[ \phi_{T+1}(\bu) \right] - \min_{\bx \in \R^d} \phi_1(\bx) \: .
\end{equation}
In particular,
\begin{equation}
\label{equation:regret-bound-2}
\sum_{t=1}^T \Exp\left[ \eta_t \ip{\grad F(\bx_t)}{\bx_t - \bu} \right] \le \Exp\left[ \phi_{T+1}(\bu) \right] - \min_{\bx \in \R^d} \phi_1(\bx) \: .
\end{equation}
Additionally, if there exists $\bx^* \in \R^d$ such that $\ip{\grad F(\bx)}{\bx
- \bx^*} \ge 0$ for all $\bx \in \R^d$ then
\begin{equation}
\label{equation:regret-bound-3}
\Exp[B_{\phi_{T+1}}(\bx^*, \bx_t)] \le \Exp\left[ \phi_{T+1}(\bx^*) \right] - \min_{\bx \in \R^d} \phi_1(\bx) \: .
\end{equation}
\end{lemma}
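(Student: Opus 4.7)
The plan is to start from the FTRL regret equality (Lemma~\ref{lemma:ftrl-equality}) applied to the sequence of rescaled gradients $\bell_1,\dots,\bell_T$ with iterates $\bx_t = \argmin_{\bx \in \R^d} H_t(\bx)$ (using $\bx_0 = \bzero$ as per the remark). After this substitution, three of the four terms on the right-hand side of \eqref{equation:ftrl-equality} are essentially in final form: the sum $\sum_{t=1}^T [H_t(\bx_t) - H_{t+1}(\bx_{t+1}) + \ip{\bell_t}{\bx_t}]$ is non-positive by the hypothesis \eqref{equation:key-inequality-assumption}, and the term $\phi_{T+1}(\bu) - \min_{\bx} \phi_1(\bx)$ is exactly what appears on the right-hand side of \eqref{equation:regret-bound-1}. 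This leaves the boundary term $H_{T+1}(\bx_{T+1}) - H_{T+1}(\bu)$ to interpret.

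The key algebraic step is to recognise that boundary term as a Bregman divergence. Since $\bx_{T+1}$ minimises $H_{T+1}(\bx) = \phi_{T+1}(\bx) - \ip{\btheta_T}{\bx}$, first-order optimality gives $\grad \phi_{T+1}(\bx_{T+1}) = \btheta_T$. Expanding
\[
H_{T+1}(\bx_{T+1}) - H_{T+1}(\bu) = \phi_{T+1}(\bx_{T+1}) - \phi_{T+1}(\bu) - \ip{\btheta_T}{\bx_{T+1} - \bu}
\]
and substituting this gradient identity collapses the expression to $-B_{\phi_{T+1}}(\bu, \bx_{T+1})$. (I read the $\bx_t$ in the statement of \eqref{equation:regret-bound-1} and \eqref{equation:regret-bound-3} as a typo for $\bx_{T+1}$.) Rearranging the resulting pointwise inequality gives
\[
B_{\phi_{T+1}}(\bu, \bx_{T+1}) + \sum_{t=1}^T \ip{\bell_t}{\bx_t - \bu} \le \phi_{T+1}(\bu) - \min_{\bx \in \R^d} \phi_1(\bx).
\]

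Next I take expectations and convert $\ip{\bell_t}{\bx_t - \bu}$ into a gradient-of-$F$ expression. For fixed deterministic $\bu$, the random variables $\bx_t$ and $\eta_t$ are $\mathcal{F}_t$-measurable, so the tower property combined with \eqref{equation:stochastic-gradient} yields
\[
\Exp[\ip{\bell_t}{\bx_t - \bu}] = \Exp\bigl[\eta_t \ip{\Exp[\bg_t \mid \mathcal{F}_t]}{\bx_t - \bu}\bigr] = \Exp[\eta_t \ip{\grad F(\bx_t)}{\bx_t - \bu}],
\]
which gives \eqref{equation:regret-bound-1}. Bound \eqref{equation:regret-bound-2} is then immediate from $B_{\phi_{T+1}}(\bu,\bx_{T+1}) \ge 0$, which holds because each $\phi_t$ is convex and differentiable. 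Finally, \eqref{equation:regret-bound-3} follows by plugging $\bu = \bx^*$ into \eqref{equation:regret-bound-1} and using the hypothesis $\ip{\grad F(\bx_t)}{\bx_t - \bx^*} \ge 0$ together with $\eta_t \ge 0$ to drop the entire sum on the left-hand side.

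There is no substantive obstacle here: the proof is essentially bookkeeping once Lemma~\ref{lemma:ftrl-equality} is in hand. The only subtle point is the measurability argument in the expectation step, where we must use that $\eta_t$ is $\mathcal{F}_t$-measurable in order to pull it outside the conditional expectation against $\bg_t$, and that $\bu$ is a fixed non-random comparator. This is precisely the subtlety the paper flags in its comment that $\eta_t$ may not depend on $\bg_t$.
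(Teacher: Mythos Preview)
Your proposal is correct and follows essentially the same approach as the paper's proof: apply Lemma~\ref{lemma:ftrl-equality}, drop the sum via \eqref{equation:key-inequality-assumption}, identify $H_{T+1}(\bx_{T+1}) - H_{T+1}(\bu) = -B_{\phi_{T+1}}(\bu,\bx_{T+1})$, take expectations using the tower property and $\mathcal{F}_t$-measurability of $\eta_t$, and then deduce \eqref{equation:regret-bound-2} and \eqref{equation:regret-bound-3} from non-negativity of the Bregman divergence and of $\eta_t\ip{\grad F(\bx_t)}{\bx_t-\bx^*}$ respectively. The only cosmetic difference is that the paper reaches the Bregman identification by noting $B_{H_{T+1}} = B_{\phi_{T+1}}$ (since the two functions differ by a linear term), whereas you expand directly via $\grad\phi_{T+1}(\bx_{T+1}) = \btheta_T$; these are the same computation, and your reading of $\bx_t$ as a typo for $\bx_{T+1}$ matches the paper's own derivation.
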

\begin{proof}
Lemma~\ref{lemma:ftrl-equality}~and the assumption
\eqref{equation:key-inequality-assumption} imply that
\begin{equation}
\label{lemma:regret-bound-proof-1}
\sum_{t=1}^T \ip{\bell_t}{\bx_t - \bu} \le \phi_{T+1}(\bu) - \min_{\bx \in \R^d} \phi_1(\bx) + H_{T+1}(\bx_{T+1}) - H_{T+1}(\bu) \: .
\end{equation}
Since $\bx_{t+1}$ is a minimizer of $H_{T+1}$, $B_{H_{T+1}}(\bu, \bx_t) =
H_{T+1}(\bu) - H_{T+1}(\bx_{T+1})$. Furthermore, since  $H_{T+1}$ and
$\phi_{T+1}$ differ by a linear function, $B_{H_{T+1}}(\bu, \bx_t) =
B_{\phi_{T+1}}(\bu, \bx_{T+1})$. Therefore, \eqref{lemma:regret-bound-proof-1}
is equivalent to
$$
B_{\phi_{T+1}}(\bu, \bx_{T+1}) + \sum_{t=1}^T \ip{\bell_t}{\bx_t - \bu} \le \phi_{T+1}(\bu) - \min_{\bx \in \R^d} \phi_1(\bx) \: .
$$
Substituting $\bell_t=\eta_t \bg_t$ and taking expectation of both sides yields
$$
\Exp[B_{\phi_{T+1}}(\bu, \bx_{T+1})] + \sum_{t=1}^T \Exp\left[ \eta_t \ip{\bg_t}{\bx_t - \bu} \right] \le \Exp\left[ \phi_{T+1}(\bu) \right] - \min_{\bx \in \R^d} \phi_1(\bx) \: .
$$
We compute $\Exp\left[ \eta_t \ip{\bg_t}{\bx_t - \bu} \right]$ as
\begin{align*}
\Exp\left[ \eta_t \ip{\bg_t}{\bx_t - \bu} \right]
= \Exp\left[ \Exp[\eta_t  \ip{\bg_t}{\bx_t - \bu} ~\middle|~ \mathcal{F}_t ] \right]
= \Exp\left[ \eta_t \ip{\Exp[ \bg_t ~|~ \mathcal{F}_t ]}{\bx_t - \bu} \right]
= \Exp[\eta_t \ip{\grad F(\bx_t)}{\bx_t - \bu}]
\end{align*}
and inequality \eqref{equation:regret-bound-1} follows. Inequality
\eqref{equation:regret-bound-2} follows from \eqref{equation:regret-bound-1} and
the fact that $B_{\phi_{T+1}}(\cdot, \cdot)$ is non-negative. Inequality
\eqref{equation:regret-bound-3} follows from \eqref{equation:regret-bound-1} and
the inequality $\ip{\grad F(\bx_t)}{\bx_t - \bx^*} \ge 0$ which holds by
assumption.
\end{proof}

The assumption \eqref{equation:key-inequality-assumption} might seem strange at
first. Generally speaking, the analysis of FTRL with an arbitrary sequence of
regularizers boils down to proving an upper bound on $H_t(\bx_t) -
H_{t+1}(\bx_{t+1}) + \ip{\bell_t}{\bx_t}$. By adding a suitable constant to the
regularizer $\phi_t$, one can ensure that the upper bound is zero.

The surprising fact is that the regularizer $\phi_t$ we use to construct
Algorithm~\ref{algorithm:ftrl-rescaled-gradients-exponetial-potential} satisfies
\eqref{equation:key-inequality-assumption} under the assumption $\norm{\bell_t}
\le 1$ (see Lemma~\ref{lemma:key-inequality} in
Appendix~\ref{section:regularizer-appendix}) and $\min_{\bx \in \R^d}
\phi_1(\bx) = \phi_1(\boldsymbol{0}) =-1$. These kind of regularizers and this
proof technique were introduced by \citet{Orabona13}.

The inequality \eqref{equation:regret-bound-3} is essential for the proof of the
asymptotic convergence for variationally coherent functions. The idea of using
this Bregman divergence to guarantee convergence for non-strongly convex
functions was pioneered by \citet{DekelGS2010}.

\section{Linearithmic Regularizer}
\label{section:regularizer}

In order to define the sequence of (non-strongly convex) linearithmic regularizers, we define the functions
$\psi^*$ and $\psi$. The function $\psi^*:\R \times (0, \infty) \times
[0,\infty) \to \R$ is defined by
\begin{equation}
\label{equation:psi-star-definition}
\psi^*(\theta, S, Q) = \exp \left( \max_{\beta \in \left[-\frac{1}{2},\frac{1}{2}\right]} \theta \beta - \beta^2 S^2 - Q \right) \: .
\end{equation}
The function $\psi:\R \times (0, \infty) \times [0,\infty) \to \R$ is defined
as the Fenchel conjugate of $\psi^*$ with respect to the first argument,
\begin{equation}
\label{equation:psi-definition}
\psi(x, S, Q) = \sup_{\theta \in \R} \theta x - \psi^*(\theta, S, Q) \: .
\end{equation}
The maximum over $\beta$ in \eqref{equation:psi-star-definition} can be removed
and replaced with an explicit formula
\begin{equation}
\label{equation:explicit-formula-for-psi-star}
\psi^*(\theta, S, Q) =
\begin{cases}
\exp\left( \dfrac{\theta^2}{4S^2} - Q \right) & \text{if $\abs{\theta} \le S^2$,} \\[0.5cm]
\exp\left( \dfrac{\abs{\theta}}{2} - \frac{1}{4} S^2 - Q \right) &  \text{if $\abs{\theta} > S^2$.}
\end{cases}
\end{equation}
Lemma~\ref{lemma:psi-properties} in Appendix~\ref{section:regularizer-appendix}
lists many properties of $\psi^*$ and $\psi$, including an explicit
formula for $\psi$ of the order of $O(|x| S (\ln(|x|+1)+Q+S))$. For now, it suffices to say
that both $\theta \mapsto \psi^*(\theta,S,Q)$ and $x \mapsto \psi(x,S,Q)$ are
even, strictly convex, continuously differentiable, and increasing on $[0, +\infty)$. Furthermore, $\theta \mapsto
\psi^*(\theta,S,Q)$ and $x \mapsto \psi(x,S,Q)$ are Fenchel conjugates of each
other. Their partial derivatives $\theta \mapsto \frac{\partial
\psi^*(\theta,S,Q)}{\partial \theta}$ and $x \mapsto \frac{\partial
\psi(x,S,Q)}{\partial x}$ are continuous bijections from $\R$ to $\R$ that are
inverses of each other.

\paragraph{Definition of $\phi_t$ and its properties}
The regularizer $\phi_t:\R^d \to \R$ is defined in terms $\psi$ as
\begin{align}
\phi_t(\bx) & = \psi(\norm{\bx}, S_{t-1}, Q_{t-1})  && \text{for $t=1,2,\dots$} \label{equation:regularizer} \: ,
\end{align}
where $S_t$ and $Q_t$ are defined in Algorithm~\ref{algorithm:ftrl-rescaled-gradients-exponetial-potential}.
We also define $\phi_t^*:\R^d \to \R$,
\begin{align}
\phi_t^*(\btheta) & = \psi^*(\norm{\btheta}, S_{t-1}, Q_{t-1}) && \text{for $t=1,2,\dots$} \label{equation:potential} \: .
\end{align}
Lemma~\ref{lemma:fenchel-conjugate-of-function-of-norm} in
Appendix~\ref{section:regularizer-appendix} implies that $\phi_t^*$ and $\phi_t$
are Fenchel conjugates of each other. Using the properties of $\psi$ and
$\psi^*$, it is easy to verify that both $\phi^*_t$ and $\phi^*_t$ are strictly
convex and continuously differentiable. The gradient maps $\grad \phi_t:\R^d \to
\R^d$ and $\grad \phi_t^*:\R^d \to \R^d$ are continuous bijections and inverses
of each other.

The sequences $\{S_t\}_{t=1}^\infty$, $\{Q_t\}_{t=1}^\infty$ and
$\{\phi_t(\bx)\}_{t=1}^\infty$ are non-decreasing. Under assumption
\eqref{equation:learning-rates-assumption-1} the sequences are bounded and have
finite limits $S_\infty$, $Q_\infty$ and $\phi_\infty(\bx) = \psi(\norm{\bx},
S_\infty, Q_\infty)$ and these limits are bounded random variables; see
Lemma~\ref{lemma:limits} in Appendix~\ref{section:regularizer-appendix}.

\paragraph{Explicit formulas}
We derive Algorithm~\ref{algorithm:ftrl-rescaled-gradients-exponetial-potential}
as a special case of Algorithm~\ref{algorithm:ftrl-rescaled-gradients} with
sequence of regularizers defined in \eqref{equation:regularizer}. According to the
the definitions of the algorithms, $\bx_t = \argmin_{\bx \in \R^d} \phi_t(\bx) - \ip{\btheta_{t-1}}{\bx}$.
Since $\bx_t$ is a minimizer of $\phi_t(\bx) - \ip{\btheta_{t-1}}{\bx}$, it
satisfies the first order stationarity condition $\grad \phi_t(\bx_t) =
\btheta_{t-1}$. Since $\grad \phi^*_t$ and $\grad \phi_t$ are inverses of each
other, $\bx_t = \grad \phi^*_t(\btheta_{t-1})$.
Formulas \eqref{equation:explicit-formula-for-psi-star} and
\eqref{equation:potential} give an explicit formula
\begin{equation}
\label{equation:regularizer-conjugate-1}
\phi^*_t(\btheta) =
\begin{cases}
\exp\left( \dfrac{\norm{\btheta}^2}{4S_{t-1}^2} - Q_{t-1} \right) &   \text{if $\norm{\btheta} \le S_{t-1}^2$,} \\[0.5cm]
\exp\left( \dfrac{\norm{\btheta}}{2} - \frac{1}{4} S_{t-1}^2 - Q_{t-1} \right) &  \text{if $\norm{\btheta} > S_{t-1}^2$,}
\end{cases}
\end{equation}
from which can compute $\bx_t = \grad \phi^*_t(\btheta_{t-1})$ and derive the
formula on Line~3 of
Algorithm~\ref{algorithm:ftrl-rescaled-gradients-exponetial-potential}.

\section{Proofs of the Main Results}
\label{section:proofs}

In this section, we present the proofs of our main results. For a matter of
readability, we only present the main and most interesting steps here, leaving
the proofs of the technical lemmas to the Appendix. As in
Section~\ref{section:ftrl}, for simplicity of notation, we assume
$\bx_0=\boldsymbol{0}\in \R^d$ and obtain the general results with a simple
translation of the coordinate system.

\subsection{Proof of Theorem~\ref{theorem:variationally-coherent-convergence}}
\label{section:proof-variationally-coherent-convergence}

In the proof, we first show that $B_{\phi_t}(\bx^*, \bx_t)$ converges to a
finite limit almost surely. Then, we show that this limit is 0. In turn, this
will prove the convergence of $\bx_t$ to $\bx^*$, even if $\phi_t$ is not
strongly convex. We will need the following two lemmas, the proofs are in
Appendix~\ref{section:variationally-coherent}.
\begin{lemma}[Convergence of Bregman divergences]
\label{lemma:bregman-convergence}
If $F:\R^d \to \R$ is variationally coherent then there exists a random variable
$B_\infty$ such that $\lim_{t \to \infty} \ B_{\phi_t}(\bx^*, \bx_t) = B_{\infty} < \infty$ almost surely.
\end{lemma}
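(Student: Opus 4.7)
The plan is to decompose $B_{\phi_t}(\bx^*, \bx_t)$ via the FTRL equality so that each summand converges almost surely. Applying Lemma~\ref{lemma:ftrl-equality} with $\bu = \bx^*$, splitting each $\langle \bell_s, \bx_s - \bx^*\rangle$ as $\eta_s \langle \nabla F(\bx_s), \bx_s - \bx^*\rangle + \eta_s \langle \bg_s - \nabla F(\bx_s), \bx_s - \bx^*\rangle$, and using the identity $H_t(\bx^*) - H_t(\bx_t) = B_{\phi_t}(\bx^*, \bx_t)$, I obtain the exact decomposition
\[
B_{\phi_t}(\bx^*, \bx_t) + A_{t-1} + J_{t-1} + N_{t-1} \;=\; \phi_t(\bx^*) - \min_{\bx \in \R^d} \phi_1(\bx) \;=:\; C_t,
\]
with $A_n = \sum_{s=1}^n \eta_s \langle \nabla F(\bx_s), \bx_s - \bx^*\rangle$, $N_n = \sum_{s=1}^n \eta_s \langle \bg_s - \nabla F(\bx_s), \bx_s - \bx^*\rangle$, and $J_n = -\sum_{s=1}^n [H_s(\bx_s) - H_{s+1}(\bx_{s+1}) + \langle \bell_s, \bx_s\rangle]$. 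Variational coherence makes $A$ non-decreasing and non-negative, the key inequality~\eqref{equation:key-inequality-assumption} makes $J$ non-decreasing and non-negative, and $N$ is a martingale with respect to $(\mathcal{F}_{n+1})_{n \ge 0}$. Integrability of the increments of $N$ is automatic: $\|\bg_s\| \le G$ and $\eta_s \le 1/G$ deterministically bound $\|\btheta_{s-1}\|$, and the closed form on Line~3 of Algorithm~\ref{algorithm:ftrl-rescaled-gradients-exponetial-potential} then deterministically bounds $\|\bx_s\|$ for every $s$.

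Next I would handle the three terms one by one. By Lemma~\ref{lemma:limits}, $C_t$ is non-decreasing and converges almost surely to a finite $C_\infty$, and $\Exp[C_\infty] < \infty$ because assumption~\eqref{equation:learning-rates-assumption-1} together with $S_t \ge 2$ gives the deterministic bounds $Q_\infty \le \gamma/2$ and $S_\infty^2 \le 4 + \gamma$. Taking expectation in the identity and using $B_{\phi_t} \ge 0$ and $\Exp[N_{t-1}] = 0$ yields $\Exp[A_{t-1} + J_{t-1}] \le \Exp[C_t] \le \Exp[C_\infty]$, so monotone convergence gives $A_{t-1} + J_{t-1} \uparrow A_\infty + J_\infty < \infty$ almost surely, and each of $A_{t-1}, J_{t-1}$ converges almost surely separately (each is non-decreasing and bounded by the convergent sum). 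For the martingale, the identity together with $B_{\phi_t}, A, J \ge 0$ forces $N_{t-1} \le C_t \le C_\infty$ almost surely, so $N_n^+ \le C_\infty$ and hence $\Exp[N_n^+] \le \Exp[C_\infty]$; combining with $\Exp[N_n] = 0$ gives $\Exp[N_n^-] = \Exp[N_n^+]$ and therefore $\sup_n \Exp[|N_n|] \le 2 \Exp[C_\infty] < \infty$. Doob's $L^1$ martingale convergence theorem then delivers $N_n \to N_\infty$ almost surely for an integrable $N_\infty$. Substituting back, $B_{\phi_t}(\bx^*, \bx_t) = C_t - A_{t-1} - J_{t-1} - N_{t-1} \to C_\infty - A_\infty - J_\infty - N_\infty =: B_\infty \in [0,\infty)$ almost surely, which is the claim.

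The main obstacle I anticipate is the convergence of the noise martingale $N$ without an a priori bound on $\|\bx_s - \bx^*\|$. The naive $L^2$ route, bounding the conditional variance by $4G^2 \eta_s^2 \|\bx_s - \bx^*\|^2$ and asking for almost sure summability, does not close because the closed-form expression for $\|\bx_s\|$ can grow with $\|\btheta_{s-1}\|$. What rescues the argument is that only a one-sided almost sure upper bound on $N$ is needed to obtain $L^1$-boundedness via the martingale identity $\Exp[N_n^+] = \Exp[N_n^-]$, and this one-sided bound is handed to us essentially for free by the non-negativity of the Bregman divergence together with the convergence of $A + J$. In short, the very identity we use to prove convergence of $B_{\phi_t}$ already provides enough control on $N$ to invoke Doob, which is why the Bregman framework with the non-strongly convex linearithmic regularizer suffices.
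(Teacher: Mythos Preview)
Your proof is correct, but it is organized differently from the paper's. The paper avoids the explicit three-term decomposition by packaging everything into a single non-negative supermartingale
\[
Y_t \;=\; B_{\phi_t}(\bx^*,\bx_t) + \phi_\infty(\bx^*) - \phi_t(\bx^*),
\]
shows $Y_{t+1}\le Y_t - \eta_t\ip{\bg_t}{\bx_t-\bx^*}$ from the key inequality, takes conditional expectation and uses variational coherence to obtain $\Exp[Y_{t+1}\mid\mathcal{F}_t]\le Y_t$, and then invokes the non-negative supermartingale convergence theorem (Lemma~\ref{lemma:supermartingale-convergence}). In your notation $Y_t = C_\infty - A_{t-1} - J_{t-1} - N_{t-1}$, so the two arguments are algebraically the same object viewed differently: the paper bounds $Y$ below by zero and lets the supermartingale theorem do all the work, whereas you separate the predictable increasing part $A+J$ from the pure martingale $N$, control $A+J$ by taking expectations, and then handle $N$ via the one-sided bound $N_{t-1}\le C_\infty$ to obtain $\sup_n\Exp|N_n|<\infty$ and invoke Doob's $L^1$ convergence theorem. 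The paper's route is shorter (one process, one convergence theorem); your decomposition is more explicit and makes visible exactly which piece requires which hypothesis, and in particular your observation that the identity itself furnishes the one-sided bound on $N$ needed for Doob is a nice alternative to the supermartingale packaging.
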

\begin{lemma}[$\norm{\bx_t-\bx^*}^2$ is squeezed]
\label{lemma:iterates-squeezed}
There exists two random variables $C_1$ and $C_2$ such that with probability one,
$0 < C_1 < C_2 < \infty$ and $C_1 \norm{\bx_t - \bx^*}^2 \le B_{\phi_t}(\bx^*, \bx_t) \le C_2 \norm{\bx_t - \bx^*}^2$.
\end{lemma}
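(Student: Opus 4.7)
The plan is to reduce the two-sided bound to a uniform (in $t$) bound on the eigenvalues of $\nabla^2\phi_t$ on a random compact region containing both $\bx^*$ and the entire iterate sequence $\{\bx_t\}_{t\ge 1}$, and then to invoke Taylor's theorem with integral remainder. Since $\phi_t(\bx)=\psi(\norm{\bx},S_{t-1},Q_{t-1})$ is radially symmetric, at any $\bx\neq\bzero$ the Hessian has eigenvalues $\frac{\partial^2\psi}{\partial x^2}(\norm{\bx},S_{t-1},Q_{t-1})$ (radial, multiplicity $1$) and $\frac{1}{\norm{\bx}}\frac{\partial\psi}{\partial x}(\norm{\bx},S_{t-1},Q_{t-1})$ (tangential, multiplicity $d-1$); these extend continuously to $\bx=\bzero$ since $\psi$ is even in its first argument. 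The task therefore reduces to bounding these two scalar functions uniformly.

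The key and only delicate step is almost sure boundedness of $\norm{\bx_t}$ uniformly in $t$. The first-order optimality condition gives $\grad\phi_t(\bx_t)=\btheta_{t-1}$, and Fenchel--Young equality rewrites
\[
B_{\phi_t}(\bx^*,\bx_t)=\phi_t(\bx^*)+\phi_t^*(\btheta_{t-1})-\ip{\btheta_{t-1}}{\bx^*}.
\]
By Lemma~\ref{lemma:bregman-convergence} the left-hand side converges almost surely to a finite limit, and by Lemma~\ref{lemma:limits} $S_{t-1}\to S_\infty<\infty$ and $Q_{t-1}\to Q_\infty<\infty$ almost surely, so $\phi_t(\bx^*)=\psi(\norm{\bx^*},S_{t-1},Q_{t-1})\to\phi_\infty(\bx^*)<\infty$ almost surely. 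Consequently $\phi_t^*(\btheta_{t-1})-\ip{\btheta_{t-1}}{\bx^*}$ is almost surely bounded above. Formula \eqref{equation:explicit-formula-for-psi-star} yields the lower bound $\phi_t^*(\btheta)\ge\exp(\norm{\btheta}/2-S_\infty^2/4-Q_\infty)$ whenever $\norm{\btheta}>S_\infty^2$; since this exponential term dominates the linear $\norm{\btheta}\norm{\bx^*}$, $\norm{\btheta_{t-1}}$ must be almost surely bounded uniformly in $t$. The explicit formula $\bx_t=\grad\phi_t^*(\btheta_{t-1})$ in Algorithm~\ref{algorithm:ftrl-rescaled-gradients-exponetial-potential} then gives $\norm{\bx_t}$ almost surely bounded, say by some random $R<\infty$, and without loss of generality we enlarge $R$ so that $\bx^*\in\overline{B(\bzero,R)}$ too.

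On the compact parameter box $[0,R]\times[2,S_\infty]\times[0,Q_\infty]$ (using $S_{t-1}\ge S_0=2$), both $\frac{\partial^2\psi}{\partial x^2}$ and $x\mapsto\frac{1}{x}\frac{\partial\psi}{\partial x}$ are strictly positive (strict convexity of $\psi$ plus the limit $\frac{1}{x}\frac{\partial\psi}{\partial x}\to\frac{\partial^2\psi}{\partial x^2}(0,\cdot,\cdot)>0$ at $x=0$) and piecewise continuous, with the only discontinuity of $\frac{\partial^2\psi}{\partial x^2}$ being a single finite jump across the $x$ corresponding to $|\theta|=S^2$ in \eqref{equation:explicit-formula-for-psi-star}. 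Hence both are uniformly bounded between strictly positive random constants $2C_1$ and $2C_2$ on this box. Taylor's theorem with integral remainder,
\[
B_{\phi_t}(\bx^*,\bx_t)=\int_0^1(1-s)\,(\bx^*-\bx_t)^\top\nabla^2\phi_t(\bx_t+s(\bx^*-\bx_t))(\bx^*-\bx_t)\,ds,
\]
combined with $\int_0^1(1-s)\,ds=\tfrac12$ and the eigenvalue bounds just obtained, yields $C_1\norm{\bx_t-\bx^*}^2\le B_{\phi_t}(\bx^*,\bx_t)\le C_2\norm{\bx_t-\bx^*}^2$ for every $t$ almost surely.

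The main obstacle is the boundedness argument in the second paragraph: we may not yet appeal to $\bx_t\to\bx^*$, since that is precisely what Lemma~\ref{lemma:iterates-squeezed} is being used to prove. The Fenchel--Young rewriting combined with the exponential growth of $\phi_t^*$ is exactly what breaks the apparent circularity; the remainder is standard compactness and the integral form of Taylor's theorem.
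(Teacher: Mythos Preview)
Your argument is correct and reaches the same conclusion as the paper, but the route you take for the \emph{boundedness of $\norm{\bx_t}$} is genuinely different from the paper's. The paper first proves a Bregman bound of the form
\[
\frac{\norm{\bx^*-\bx_t}^2}{S_{t-1}+2}\min\Bigl\{1,\tfrac{1}{\widetilde x_t}\Bigr\}\le B_{\phi_t}(\bx^*,\bx_t)\le 2\norm{\bx^*-\bx_t}^2(S_{t-1}^2+Q_{t-1})e^{Q_{t-1}}
\]
with $\widetilde x_t$ lying between $\norm{\bx^*}$ and $\norm{\bx_t}$ (Lemma~\ref{lemma:bregman-bounds}), and then extracts a uniform bound on $\norm{\bx_t}$ by a three-way case analysis on the size of $\widetilde x_t$ and the ratio $\norm{\bx_t}/\norm{\bx^*}$ (Lemma~\ref{lemma:iterate-bound}). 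You instead pass to the dual: rewriting $B_{\phi_t}(\bx^*,\bx_t)=\phi_t(\bx^*)+\phi_t^*(\btheta_{t-1})-\ip{\btheta_{t-1}}{\bx^*}$ via Fenchel--Young, you combine the almost sure convergence of $B_{\phi_t}(\bx^*,\bx_t)$ (Lemma~\ref{lemma:bregman-convergence}) with the exponential growth of $\phi_t^*$ along the linear-exponent branch of \eqref{equation:explicit-formula-for-psi-star} to force $\sup_t\norm{\btheta_{t-1}}<\infty$ directly, then read off $\sup_t\norm{\bx_t}<\infty$ from the explicit formula for $\grad\phi_t^*$. This dual argument is cleaner and bypasses the case analysis entirely. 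For the squeeze itself you invoke compactness of $[0,R]\times[2,S_\infty]\times[0,Q_\infty]$ and piecewise continuity of the radial and tangential eigenvalues rather than the explicit first/second-derivative inequalities of Lemma~\ref{lemma:psi-properties}, parts~\ref{lemma:psi-property-6}--\ref{lemma:psi-property-7}; this is more abstract but equally valid (the single finite jump of $\partial^2\psi/\partial x^2$ across $x=\tfrac12\exp(S^2/4-Q)$ does not spoil the positive lower bound since both branches are continuous and strictly positive on their closed pieces). The trade-off is that the paper's route yields explicit formulas for $C_1,C_2$ in terms of $S_\infty,Q_\infty$ and $\sup_t\norm{\bx_t}$, whereas yours only yields existence; for the statement of the lemma as written this is immaterial.
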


\begin{proof}[Proof of Theorem~\ref{theorem:variationally-coherent-convergence}]
Lemma~\ref{lemma:regret-bound} and $\min_{\bx \in \R^d} \phi_1(\bx) =
-1$ imply that for any $T \ge 0$,
\[
\sum_{t=1}^T \Exp\left[ \eta_t \ip{\grad F(\bx_t)}{\bx_t - \bx^*} \right]
\le 1 + \Exp\left[ \phi_{\infty}(\bx^*) \right] \: .
\]
Since $F$ is variationally coherent, $\eta_t \ip{\grad F(\bx_t)}{\bx_t - \bx^*} \ge 0$.
Monotone convergence theorem implies
\[
\Exp\left[ \sum_{t=1}^\infty \eta_t \ip{\grad F(\bx_t)}{\bx_t - \bx^*} \right]
\le 1 + \Exp\left[ \phi_{\infty}(\bx^*) \right]
< \infty \: .
\]
Therefore,
\begin{equation}
\label{equation:regret}
\sum_{t=1}^\infty \eta_t \ip{\grad F(\bx_t)}{\bx_t - \bx^*} < \infty \qquad \text{almost surely.}
\end{equation}
Moreover, Lemma~\ref{lemma:bregman-convergence} implies that
$\lim_{t \to \infty} B_{\phi_t}(\bx^*,\bx_t) = B_{\infty} < \infty$ almost surely.

Now, we claim that $B_\infty = 0$. Clearly, $B_\infty \ge 0$. Suppose by
contradiction that $B_\infty$ is strictly positive. Then, there exists a random
variable $T_0$ such that $T_0 < \infty$ almost surely and $\frac{1}{2} B_\infty
\le B_{\phi_t}(\bx^*,\bx_t) \le 2 B_\infty$ for all $t \ge T_0$. So,
Lemma~\ref{lemma:iterates-squeezed} implies that $\sqrt{\frac{B_\infty}{2C_2}}
\le \norm{\bx_t - \bx^*} \le \sqrt{\frac{2B_\infty}{C_1}}$ for all $t \ge T_0$.
Now, let
\[
\delta = \inf \left\{ \ip{\grad F(\bx)}{\bx - \bx^*} ~:~ \bx \in \R^d,
\sqrt{\frac{B_\infty}{2C_2}} \le \norm{\bx - \bx^*} \le \sqrt{\frac{2B_\infty}{C_1}}  \right\} \: .
\]
Since $F$ is continuously differentiable, the function $\bx \mapsto \ip{\grad
F(\bx)}{\bx - \bx^*}$ is continuous. The infimum is taken over a compact set
$\{\bx \in \R^d ~:~ \sqrt{\frac{B_\infty}{2C_2}} \le \norm{\bx_t - \bx^*} \le
\sqrt{\frac{2B_\infty}{C_1}} \}$. Therefore, the infimum is attained at some
point $\widetilde{\bx}$ in this set. That is, $\delta = \ip{\grad
F(\widetilde{\bx})}{\widetilde{\bx} - \bx^*}$. Since $B_{\infty} > 0$,
$\widetilde{\bx} \neq \bx^*$ and therefore $\delta > 0$. Thus,
\[
\sum_{t=1}^\infty \eta_t \ip{\grad F(\bx_t)}{\bx_t - \bx^*}
\ge \sum_{t=T_0}^\infty \eta_t \ip{\grad F(\bx_t)}{\bx_t - \bx^*}
\ge \delta \sum_{t=T_0}^\infty \eta_t = \infty \qquad \text{almost surely} \: ,
\]
which contradicts \eqref{equation:regret}.
Thus, $\lim_{t \to \infty} B_{\phi_t}(\bx^*,\bx_t) = 0$ almost surely.
Finally, Lemma~\ref{lemma:iterates-squeezed} implies that  $\norm{\bx_t - \bx^*}$
converges to $0$ almost surely as well.
\end{proof}

\subsection{Proof of Theorem~\ref{theorem:convex-average-non-adaptive}}
\label{section:proof-convex-average-non-adaptive}

Given the results in Lemma~\ref{lemma:regret-bound}, the proof of
Theorem~\ref{theorem:convex-average-non-adaptive} follows from standard
arguments from online convex optimization and online-to-batch
conversion~\citep{Cesa-bianchiCG02}. We only need a technical lemma to upper
bound the values of $\phi_T(\bu)$. Its proof is in
Appendix~\ref{section:convex-case}.

\begin{lemma}[Bound on $S_T,Q_T,\phi_T(\bu)$]
\label{lemma:bound-S-Q-psi-non-adaptive}
Let $\alpha > 1/2$. If $\eta_t = \frac{1}{Gt^\alpha}$,
then, for any $T \ge 0$ and any $\bu \in \R^d$, we have
\begin{align*}
S_T \le \sqrt{5  + \frac{1}{2\alpha-1}} \: , \qquad
Q_T \le \ln \left(5  + \frac{1}{2\alpha-1} \right) \: , \\
\phi_T(\bu) \le \sqrt{5 + \frac{1}{2\alpha - 1}} \norm{\bu} \left[ 2\ln(1 + 2 \norm{\bu}) + 9 \sqrt{5 + \frac{1}{2\alpha -1}} \right] \: .
\end{align*}
\end{lemma}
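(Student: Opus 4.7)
\noindent The three estimates are essentially independent, and my plan is to treat them as three short one-dimensional calculations; the first two feed into the third via the explicit closed form for $\psi$ from Lemma~\ref{lemma:psi-properties}. For $S_T$: the choice $\eta_t = 1/(Gt^\alpha)$ together with $\norm{\bg_t} \le G$ gives $\norm{\bell_t} \le t^{-\alpha}$, so unwinding the recursion and comparing the resulting series to an integral tail (valid for $\alpha > 1/2$) yields
\[
S_T^2 = 4 + \sum_{t=1}^T \norm{\bell_t}^2 \le 4 + \sum_{t=1}^T t^{-2\alpha} \le 4 + 1 + \int_1^\infty t^{-2\alpha}\, dt = 5 + \frac{1}{2\alpha-1}.
\]

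\noindent For $Q_T$: I would apply the elementary inequality $(y-x)/y \le \ln(y/x)$ for $0 < x \le y$ (which is immediate from $\int_x^y ds/s \ge \int_x^y ds/y$) to each increment $\norm{\bell_t}^2 = S_t^2 - S_{t-1}^2$ against $S_t^2$. The per-step bound telescopes to
\[
Q_T \le \sum_{t=1}^T \ln\bigl(S_t^2/S_{t-1}^2\bigr) = \ln(S_T^2/4) \le \ln(S_T^2) \le \ln\bigl(5 + 1/(2\alpha-1)\bigr),
\]
where I use monotonicity of $\ln$ and the previously-established bound on $S_T^2$.

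\noindent For $\phi_T(\bu) = \psi(\norm{\bu}, S_{T-1}, Q_{T-1})$, I substitute the preceding estimates into the explicit upper bound on $\psi$ supplied by Lemma~\ref{lemma:psi-properties}, which is of leading order $|x|\,S\,\bigl(\ln(|x|+1) + Q + S\bigr)$. Writing $S = \sqrt{5+1/(2\alpha-1)}$ and using $S \ge 2$, one has $Q_T \le 2\ln S \le S$, so $Q+S$ collapses to $O(S)$ and the bracket reduces to the form $c_1 \ln(1+c_2\norm{\bu}) + c_3 S$; chasing the numerical constants produced by Lemma~\ref{lemma:psi-properties} delivers the stated $2\ln(1+2\norm{\bu}) + 9S$. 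The main obstacle is precisely this constant-tracking: because $\psi^*$ is the piecewise exponential \eqref{equation:explicit-formula-for-psi-star} with a regime change at $|\theta| = S^2$, producing the explicit closed form for $\psi$ needed here requires a case analysis on $\norm{\bu}$ (small, corresponding to the quadratic regime of $\psi^*$, versus large, corresponding to the linear regime), and the precise $2$ and $9$ in the bracket emerge only after tightening both cases. This case analysis is what I would defer to the regularizer appendix; the $S_T$ and $Q_T$ pieces are, by contrast, essentially textbook parameter-free-style calculations.
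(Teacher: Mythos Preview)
Your proposal is correct and follows essentially the same route as the paper's own proof: the integral-comparison bound for $S_T^2$, the telescoping argument $Q_T \le \ln(S_T^2/4) \le 2\ln S_T$ (the paper packages this as Lemma~\ref{lemma:bound-Q} via the integral $\int_4^{S_T^2} dx/x$, which is the same computation), and then the substitution into part~9 of Lemma~\ref{lemma:psi-properties} together with $Q_{T-1} \le 2\ln S_{T-1}$ and $\ln S \le S$ to collapse $3Q+3S$ into $9S$. Your remark that the constants $2$ and $9$ emerge from the two-regime case analysis for $\psi$ is exactly how the paper organizes it, deferring that analysis to Lemma~\ref{lemma:psi-properties} in the appendix.
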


\begin{proof}[Proof of Theorem~\ref{theorem:convex-average-non-adaptive}]
Lemma~\ref{lemma:regret-bound} and $\min_{\bx \in \R^d} \phi_1(\bx) = -1$ imply
that for any $\bu \in \R^d$,
\[
\sum_{t=1}^T \Exp\left[ \eta_t \ip{\grad F(\bx_t)}{\bx_t - \bu} \right] \le 1 + \Exp\left[\phi_{T+1}(\bu) \right] \: .
\]
Since $F$ is convex, $F(\bx_t) - F(\bu) \le \ip{\grad F(\bx_t)}{\bx_t - \bu}$.
Substituting $\bx^*$ for $\bu$, we have
\[
\sum_{t=1}^T  \Exp\left[ \eta_t (F(\bx_t) - F(\bx^*)) \right] \le 1 + \Exp\left[\phi_{T+1}(\bx^*) \right] \: .
\]
Since $\{\eta_t\}_{t=1}^\infty$ is non-negative decreasing and $F(\bx_t) -
F(\bx^*)$ is non-negative,
\[
\eta_T \sum_{t=1}^T  \Exp\left[ F(\bx_t) - F(\bx^*) \right] \le 1 + \Exp\left[\phi_{T+1}(\bx^*) \right] \: .
\]
By Jensen's inequality, $F(\overline{\bx}_T) \le \frac{1}{T} \sum_{t=1}^T
F(\bx_t)$. Thus, substituting for $\eta_T$ and using
Lemma~\ref{lemma:bound-S-Q-psi-non-adaptive} to upper bound $\phi_{T+1}(\bx^*)$,
we get the stated bound.
\end{proof}

\subsection{Proof of Theorem~\ref{theorem:convex-last-iterate}}
\label{section:proof-convex-last-iterate}

Here, we prove the convergence of the last iterate, extending the approach of
\citet{Orabona20} to FTRL with rescaled gradients. We need the following Lemmas
that are proved in Appendix~\ref{section:last-iterate}.
\begin{lemma}{\citep{Orabona20}}
\label{lemma:last-average}
Let $\eta_1, \eta_2, \dots, \eta_T$ be a non-increasing sequence of non-negative
numbers. Let $q_1, q_2, \dots, q_T$ be non-negative. Then
\[
\eta_T q_T \le \frac{1}{T} \sum_{t=1}^T \eta_t q_t + \sum_{k=1}^{T-1} \frac{1}{k(k+1)} \sum_{t=T-k}^T \eta_t (q_t - q_{T-k})~.
\]
\end{lemma}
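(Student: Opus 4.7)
The plan is to exhibit both sides as telescoping sums over a family of weighted tail averages, and then show that the remainder is a sum of manifestly non-negative quantities. I would set
\[
A_k \;:=\; \frac{1}{k+1}\sum_{t=T-k}^T \eta_t\, q_t \qquad \text{for } k=0,1,\dots,T-1,
\]
and note that $A_0 = \eta_T q_T$ (the left-hand side of the claim) while $A_{T-1} = \frac{1}{T}\sum_{t=1}^T \eta_t q_t$ (the first term on the right-hand side). The telescoping identity $A_0 = A_{T-1} + \sum_{k=1}^{T-1}(A_{k-1}-A_k)$ then already accounts for the leading term, and the only task is to compare each increment $A_{k-1}-A_k$ with the $k$-th summand in the stated bound.

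A direct calculation, pulling the $t=T-k$ term out of $A_k$, gives
\[
A_{k-1} - A_k \;=\; \frac{1}{k(k+1)}\left[\,\sum_{t=T-k+1}^T \eta_t q_t \;-\; k\,\eta_{T-k}\,q_{T-k}\,\right].
\]
On the other hand, since the $t=T-k$ contribution vanishes (the factor $q_t-q_{T-k}$ is $0$ there), the $k$-th summand on the right-hand side of the claim equals
\[
\frac{1}{k(k+1)}\left[\,\sum_{t=T-k+1}^T \eta_t q_t \;-\; q_{T-k}\sum_{t=T-k+1}^T \eta_t\,\right].
\]

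Subtracting and summing over $k$, the claim's RHS minus $\eta_T q_T$ collapses to
\[
\sum_{k=1}^{T-1}\frac{q_{T-k}}{k(k+1)}\left[\,k\,\eta_{T-k} \;-\; \sum_{t=T-k+1}^T \eta_t\,\right].
\]
Each bracket is non-negative: the inner sum has exactly $k$ terms, and by monotonicity each $\eta_t$ with $t\ge T-k+1$ is at most $\eta_{T-k}$. Combined with $q_{T-k}\ge 0$, every summand is non-negative, which proves the inequality.

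The argument is elementary and I expect no real obstacle. The only mildly delicate point is choosing the telescoping family $A_k$ correctly so that the leftover is cleanly a weighted comparison between $k\,\eta_{T-k}$ and $\eta_{T-k+1}+\dots+\eta_T$; a naive choice of weights would leave cross-terms mixing $q_t$ and $q_{T-k}$ that cannot be controlled by the monotonicity of $\eta_t$ alone.
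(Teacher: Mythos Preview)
Your proof is correct and follows essentially the same approach as the paper's. Your tail averages $A_k$ coincide with the paper's $S_{k+1}$ (the paper indexes from $1$ to $T$ rather than $0$ to $T-1$), and both arguments telescope over this family and invoke the same monotonicity estimate $\sum_{t=T-k+1}^T \eta_t \le k\,\eta_{T-k}$; the only cosmetic difference is that the paper bounds each increment $S_k - S_{k+1}$ inequality-first and then unrolls, whereas you write the telescoping identity exactly and isolate the non-negative remainder at the end.
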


\begin{lemma}[Difference of regularizers]
\label{lemma:difference-of-regularizers}
Let $A,T$ be integers such that $1 \le A \le T+1$. Then,
\begin{equation}
\label{equation:difference-of-regularizers}
\Exp[\phi_{T+1} (\bx_A) - \phi_A(\bx_A)]
\le K \sum_{t=A}^T t^{-2\alpha},
\end{equation}
where $K=\frac{1}{2} \exp\left(\frac{1}{4} S \right) + 3\norm{\bx^*} + 5(S + 2) \Exp[2 + \phi_\infty(\bx^*)]$.
\end{lemma}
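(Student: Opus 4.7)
\textbf{Proof plan for Lemma~\ref{lemma:difference-of-regularizers}.}
My plan is to telescope the difference, bound each per-step increment by a derivative-times-increment quantity, and absorb all the worst-case constants into $K$. Since $\phi_s(\bx)=\psi(\norm{\bx},S_{s-1},Q_{s-1})$, the telescoping identity
\begin{equation*}
\phi_{T+1}(\bx_A)-\phi_A(\bx_A) \;=\; \sum_{t=A}^{T}\bigl[\psi(\norm{\bx_A},S_t,Q_t)-\psi(\norm{\bx_A},S_{t-1},Q_{t-1})\bigr]
\end{equation*}
is immediate. By \eqref{equation:explicit-formula-for-psi-star}, $\psi^*$ is non-increasing in both $S$ and $Q$, so its Fenchel conjugate $\psi$ is non-decreasing in both and jointly smooth. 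The mean value theorem then gives, for some $(\tilde S_t,\tilde Q_t)$ on the segment from $(S_{t-1},Q_{t-1})$ to $(S_t,Q_t)$,
\begin{equation*}
\psi(\norm{\bx_A},S_t,Q_t)-\psi(\norm{\bx_A},S_{t-1},Q_{t-1}) \;=\; \partial_S\psi(\norm{\bx_A},\tilde S_t,\tilde Q_t)\,(S_t-S_{t-1}) \,+\, \partial_Q\psi(\norm{\bx_A},\tilde S_t,\tilde Q_t)\,(Q_t-Q_{t-1}).
\end{equation*}

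Next, I bound the two ingredients of each summand. From the algorithm's updates $S_t^2=S_{t-1}^2+\norm{\bell_t}^2$ and $Q_t=Q_{t-1}+\norm{\bell_t}^2/S_t$, together with $S_0=2$, I get $S_t-S_{t-1}\le \norm{\bell_t}^2/(2S_0)=\norm{\bell_t}^2/4$ and $Q_t-Q_{t-1}\le \norm{\bell_t}^2/2$. Since $\norm{\bell_t}=\eta_t\norm{\bg_t}\le t^{-\alpha}$, the joint increment is of order $t^{-2\alpha}$, which explains the shape of the bound. For the derivative factors, the envelope theorem applied to $\psi(x,S,Q)=\sup_\theta[\theta x-\psi^*(\theta,S,Q)]$ gives $\partial_Q\psi(x,S,Q)=\psi^*(\theta^\star,S,Q)$ and an analogous formula for $\partial_S\psi$ involving the active branch of \eqref{equation:explicit-formula-for-psi-star}, where $\theta^\star=\psi'(x,S,Q)$. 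Using the explicit two-piece form of $\psi^*$, the a priori bounds $S_{t-1}\ge S_0=2$ and $S_{t-1}\le S=\sqrt{5+\tfrac{1}{2\alpha-1}}$ from Lemma~\ref{lemma:bound-S-Q-psi-non-adaptive}, and Fenchel duality $\psi^*(\theta^\star,S,Q)=\theta^\star\norm{\bx_A}-\psi(\norm{\bx_A},S,Q)$, I can bound each derivative term by a sum of three contributions: a pure exponential-branch constant of size $\tfrac{1}{2}\exp(S/4)$, a linear-in-$\norm{\bx_A}$ term, and an $(S+2)$-multiple of $\phi_\infty(\bx_A)+\text{const}$.

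Finally, I pass to expectation and identify the three summands in $K$. The $\tfrac{1}{2}\exp(S/4)$ piece absorbs directly. The linear term $3\norm{\bx^*}$ appears after the triangle inequality $\norm{\bx_A}\le \norm{\bx_A-\bx^*}+\norm{\bx^*}$, where the random piece $\norm{\bx_A-\bx^*}$ is rolled into the $\phi_\infty$ contribution. The $5(S+2)\Exp[2+\phi_\infty(\bx^*)]$ piece arises by replacing the random $\phi_\infty(\bx_A)$ with the reference quantity $\phi_\infty(\bx^*)$. \textbf{Main obstacle.} This last replacement is the delicate step: I need a deterministic-style bound of the form $\Exp[\phi_\infty(\bx_A)]\lesssim \Exp[\phi_\infty(\bx^*)]+\norm{\bx^*}+\text{const}$, uniformly in $A$. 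I plan to obtain it by combining (i)~the near-linear growth of $\psi$ (its explicit upper bound of order $|x|S(\ln(|x|+1)+Q+S)$ stated after \eqref{equation:explicit-formula-for-psi-star}), (ii)~the Bregman inequality $\Exp[B_{\phi_A}(\bx^*,\bx_A)]\le \Exp[\phi_A(\bx^*)]+1$ from Lemma~\ref{lemma:regret-bound} applied with $\bu=\bx^*$ and using convexity of $F$ to drop the gradient sum, and (iii)~careful constant-tracking so that the resulting coefficient of $\sum_{t=A}^T t^{-2\alpha}$ matches $K$ exactly.
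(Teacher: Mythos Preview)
Your telescoping-plus-envelope-theorem route is a reasonable alternative to the paper's direct case analysis on the explicit formula \eqref{equation:psi-explicit-formula}, and the increment bounds $S_t-S_{t-1}\le\|\bell_t\|^2/4$, $Q_t-Q_{t-1}\le\|\bell_t\|^2/4$ are fine. But you misidentify the quantity that the derivative bounds leave you with, and this leads you to the wrong ``main obstacle.''

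If you carry out the envelope computation you sketched, you get $\partial_Q\psi(x,S,Q)=\psi^*(\theta^\star,S,Q)$ and $\partial_S\psi(x,S,Q)\le\tfrac{S}{2}\psi^*(\theta^\star,S,Q)$, where $\theta^\star=\partial_x\psi(x,S,Q)$. Now use the inverse relation $x=\partial_\theta\psi^*(\theta^\star,S,Q)$ together with \eqref{equation:explicit-formula-for-psi-star}: in the branch $|\theta^\star|>S^2$ this gives $\psi^*(\theta^\star,S,Q)=2|x|$, and in the branch $|\theta^\star|\le S^2$ it gives $\psi^*(\theta^\star,S,Q)\le\exp(\tfrac14 S^2-Q)$. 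So the derivative factors are controlled by $\max\{\,\|\bx_A\|,\ \tfrac12\exp(\tfrac14 S^2-Q)\,\}$, which is exactly the prefactor the paper obtains in \eqref{equation:difference-of-regularizer-proof} by its case analysis. There is \emph{no} contribution of the form ``$(S+2)\cdot\phi_\infty(\bx_A)$'' from the derivatives; the Fenchel identity $\psi^*(\theta^\star)=\theta^\star x-\psi(x)$ does not help here and only obscures the clean bound above.

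Consequently, the quantity you must control in expectation is $\Exp[\|\bx_A\|]$, not $\Exp[\phi_\infty(\bx_A)]$. Your proposed fix---bounding $\Exp[\phi_\infty(\bx_A)]$ via the near-linear growth of $\psi$---goes the wrong way, since $\phi_\infty(\bx_A)$ grows like $\|\bx_A\|\ln\|\bx_A\|$ and is harder, not easier, to bound. The missing ingredient is Lemma~\ref{lemma:iterate-bound}, which bounds $\|\bx_A\|$ in terms of $B_{\phi_A}(\bx^*,\bx_A)$; combined with \eqref{equation:regret-bound-3} from Lemma~\ref{lemma:regret-bound} (valid here because convexity of $F$ gives $\ip{\nabla F(\bx)}{\bx-\bx^*}\ge 0$), this yields $\Exp[\|\bx_A\|]\le 3\|\bx^*\|+5(S+2)\Exp[2+\phi_\infty(\bx^*)]$ uniformly in $A$, and the constant $K$ falls out.
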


\begin{lemma}[Interesting sum]
\label{lemma:bound-sum-k}
Let $\alpha> \frac{1}{2}$. Then, $\sum_{k=1}^{T-1} \frac{1}{k(k+1)} \sum_{t=T-k}^T t^{-2\alpha} \le \frac{1}{T} + T^{-2\alpha} + \frac{1}{e(2\alpha - 1)T}$.
\end{lemma}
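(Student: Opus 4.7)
My plan is to swap the order of summation, exploit the telescoping nature of $\frac{1}{k(k+1)} = \frac{1}{k} - \frac{1}{k+1}$, and then reduce the whole expression to a single-variable sum that can be controlled by integral comparison together with a one-shot maximization. First, isolate the contribution of $t = T$ in the inner sum. Since $\sum_{k=1}^{T-1} \frac{1}{k(k+1)} = 1 - \frac{1}{T}$, this contributes $T^{-2\alpha}(1 - 1/T) \le T^{-2\alpha}$, accounting for one of the three terms on the right-hand side.

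Next, swap the order of summation for the remaining $t \in \{1,\dots,T-1\}$. For each such $t$, the set of admissible $k$ is $\{T-t, T-t+1, \dots, T-1\}$, and the inner telescoping sum is $\sum_{k=T-t}^{T-1} \frac{1}{k(k+1)} = \frac{1}{T-t} - \frac{1}{T}$. Using the algebraic identity $\frac{1}{T-t} - \frac{1}{T} = \frac{t}{T(T-t)}$, the remaining piece becomes
\[
\sum_{t=1}^{T-1} t^{-2\alpha}\left(\frac{1}{T-t} - \frac{1}{T}\right) = \frac{1}{T}\sum_{t=1}^{T-1} \frac{t^{1-2\alpha}}{T-t}.
\]
It therefore suffices to establish $\sum_{t=1}^{T-1} \frac{t^{1-2\alpha}}{T-t} \le 1 + \frac{1}{e(2\alpha-1)}$, which yields the remaining two summands $\frac{1}{T}$ and $\frac{1}{e(2\alpha-1)T}$ after multiplying through by $\frac{1}{T}$.

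To control this residual sum, split the range at $T/2$. For $t \le T/2$, use $\frac{1}{T-t} \le \frac{2}{T}$ and approximate $\sum_t t^{1-2\alpha}$ by an integral; for $t > T/2$, bound $t^{1-2\alpha} \le 2^{2\alpha-1}T^{1-2\alpha}$ and dominate $\sum \frac{1}{T-t}$ by $1 + \ln(T/2)$. The critical ingredient is the elementary calculus fact
\[
\max_{x \ge 1} x^{1-2\alpha} \ln x = \frac{1}{e(2\alpha-1)}, \qquad \text{attained at } x = e^{1/(2\alpha-1)},
\]
which absorbs the $T^{1-2\alpha}\ln T$ factor produced in the large-$t$ regime.

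The main obstacle is constant management: the crude dyadic split introduces extraneous factors of $2^{2\alpha-1}$ and a $\frac{1}{2-2\alpha}$ from the small-$t$ integral, neither of which is present in the target bound $1 + \frac{1}{e(2\alpha-1)}$. Squeezing out these constants likely requires either a sharper split (for instance at the natural minimum $t^{\ast} = T(2\alpha-1)/(2\alpha)$ of the U-shaped function $f(t) = t^{1-2\alpha}/(T-t)$), or directly using that $f$ is unimodal to compare $\sum_{t=1}^{T-1} f(t)$ against $f(1) + f(T-1) + \int_1^{T-1} f(t)\,dt$ and then invoking the max-inequality on the resulting integral after the substitution $t = Tu$. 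This clean endgame is the delicate part; everything else is mechanical.
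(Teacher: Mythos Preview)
Your reduction is correct and matches the paper exactly: swap the order of summation, telescope $\frac{1}{k(k+1)}$, separate the $t=T$ term to get $T^{-2\alpha}$, and reduce to showing
\[
\frac{1}{T}\sum_{t=1}^{T-1} \frac{t^{1-2\alpha}}{T-t} \le \frac{1}{T} + \frac{1}{e(2\alpha-1)T}.
\]
But the proof is not finished: you explicitly flag the endgame as ``the delicate part'' and only sketch possible approaches. The dyadic split you describe cannot produce the stated constants (as you note, it leaves a stray $2^{2\alpha-1}$ and, worse, a $\frac{1}{2-2\alpha}$ that blows up as $\alpha\to 1^-$ and has the wrong sign for $\alpha>1$). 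The alternative ideas---splitting at the minimizer $t^* = T(2\alpha-1)/(2\alpha)$, or comparing the sum of the U-shaped function $t^{1-2\alpha}/(T-t)$ with its integral---are speculative; neither the integral $\int_1^{T-1} t^{1-2\alpha}/(T-t)\,dt$ nor the unimodality argument obviously yields the clean bound $1+\frac{1}{e(2\alpha-1)}$.

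The missing idea, which the paper uses, is a convexity-based \emph{linear interpolation} rather than a split. Since $\alpha>\tfrac12$, the map $t\mapsto t^{1-2\alpha}$ is convex on $(0,\infty)$, so on $[1,T]$ it lies below its secant:
\[
t^{1-2\alpha} \le \frac{T-t}{T-1} + \frac{t-1}{T-1}\,T^{1-2\alpha}.
\]
Substituting into $\sum_{t=1}^{T-1} \frac{t^{1-2\alpha}}{T(T-t)}$, the first piece cancels the factor $T-t$ and contributes exactly $\frac{1}{T}$. The second piece becomes $\frac{T^{-2\alpha}}{T-1}\sum_{t=1}^{T-1}\frac{t-1}{T-t}$; writing $\frac{t-1}{T-t}=\frac{T-1}{T-t}-1$ collapses this to $T^{-2\alpha}\sum_{j=1}^{T-1}\frac{1}{j}-T^{-2\alpha}$, and the harmonic sum is at most $1+\ln T$. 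Finally $T^{-2\alpha}\ln T = T^{-1}\cdot T^{1-2\alpha}\ln T \le T^{-1}\cdot\frac{1}{e(2\alpha-1)}$ by the very maximization you identified. This convexity trick is what replaces your dyadic split and delivers the exact constants.
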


\begin{lemma}[FTRL partial regret bound]
\label{lemma:partial-regret}
Let $\bell_1, \bell_2, \dots, \bell_T \in \R^d$ and $\bx_t \in \argmin_{\bx \in
\R^d} \ H_t(\bx)$ where $H_t(\bx)$ is defined in
\eqref{equation:ftrl-objective}. Assume that for all $t \ge 1$,
$H_t(\bx_t) - H_{t+1}(\bx_{t+1}) + \ip{\bell_t}{\bx_t} \le 0$.
Then, for any $A \le T$, we have
\[
\sum_{t=A}^T \ip{\bell_t}{\bx_t - \bx_A}
\le \phi_{T+1} (\bx_A) - \phi_A(\bx_A) \: .
\]
\end{lemma}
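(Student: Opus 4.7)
The plan is to mimic the telescoping argument behind Lemma~\ref{lemma:ftrl-equality}, but restricted to the tail of the sequence starting at time $A$, with the specific comparator $\bx_A$ rather than an arbitrary $\bu$. Since $\bx_A$ is an iterate produced by the algorithm, several cancellations happen naturally.

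The first step is to notice the identity following from the definition of $H_t$:
\[
H_{t+1}(\bx) - H_t(\bx) = \phi_{t+1}(\bx) - \phi_t(\bx) + \ip{\bell_t}{\bx}, \qquad \text{so} \qquad \ip{\bell_t}{\bx} = H_{t+1}(\bx) - H_t(\bx) + \phi_t(\bx) - \phi_{t+1}(\bx).
\]
Evaluate this at $\bx = \bx_A$ and sum from $t=A$ to $T$. Both right-hand sums telescope, yielding
\[
\sum_{t=A}^T \ip{\bell_t}{\bx_A} = H_{T+1}(\bx_A) - H_A(\bx_A) + \phi_A(\bx_A) - \phi_{T+1}(\bx_A).
\]
Subtracting this from $\sum_{t=A}^T \ip{\bell_t}{\bx_t}$ gives an exact expression for $\sum_{t=A}^T \ip{\bell_t}{\bx_t - \bx_A}$ in terms of $\sum_{t=A}^T \ip{\bell_t}{\bx_t}$ and the two Bregman-like boundary terms.

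Next, I would upper bound $\sum_{t=A}^T \ip{\bell_t}{\bx_t}$ using the per-step hypothesis $H_t(\bx_t) - H_{t+1}(\bx_{t+1}) + \ip{\bell_t}{\bx_t} \le 0$. Summing from $A$ to $T$ makes $H_t(\bx_t) - H_{t+1}(\bx_{t+1})$ telescope to $H_A(\bx_A) - H_{T+1}(\bx_{T+1})$, so
\[
\sum_{t=A}^T \ip{\bell_t}{\bx_t} \le H_{T+1}(\bx_{T+1}) - H_A(\bx_A).
\]
Substituting into the exact expression from the previous step collapses $H_A(\bx_A)$ and leaves
\[
\sum_{t=A}^T \ip{\bell_t}{\bx_t - \bx_A} \le \bigl[H_{T+1}(\bx_{T+1}) - H_{T+1}(\bx_A)\bigr] + \phi_{T+1}(\bx_A) - \phi_A(\bx_A).
\]

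The final step is to discard the bracketed term: since $\bx_{T+1} \in \argmin_{\bx} H_{T+1}(\bx)$, we have $H_{T+1}(\bx_{T+1}) - H_{T+1}(\bx_A) \le 0$, producing the claimed inequality. There is no serious obstacle — the entire argument is algebraic telescoping plus one application of the key-inequality hypothesis and one use of optimality of $\bx_{T+1}$; the only thing one has to be careful about is correctly tracking the index conventions ($H_{t+1}$ versus $\phi_{t+1}$) so that the two telescopes align.
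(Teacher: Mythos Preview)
Your proof is correct and follows essentially the same approach as the paper: telescope $\sum_{t=A}^T \ip{\bell_t}{\bx_A}$ via the definition of $H_t$, bound $\sum_{t=A}^T \ip{\bell_t}{\bx_t}$ by telescoping the key-inequality hypothesis, and finish with the optimality of $\bx_{T+1}$. The only cosmetic difference is that the paper first carries a general comparator $\bu$ and then specializes to $\bu=\bx_A$, whereas you work with $\bx_A$ from the outset; your presentation is slightly more direct but the underlying argument is identical.
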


\begin{proof}[Proof of Theorem~\ref{theorem:convex-last-iterate}]
Starting from Lemma~\ref{lemma:partial-regret}, we substitute $\bell_t = \eta_t
\bg_t$, take expectation of both sides, and use that $\Exp\left[ \eta_t
\ip{\bg_t}{\bx_t - \bx_A} \right] = \Exp\left[ \eta_t \ip{\grad F(\bx_t)}{\bx_t
- \bx_A} \right]$ by assumption \eqref{equation:stochastic-gradient}. We get
\[
\sum_{t=A}^T \Exp\left[ \eta_t \ip{\grad F(\bx_t)}{\bx_t - \bx_A} \right] \le \Exp\left[\phi_{T+1} (\bx_A) - \phi_A(\bx_A) \right] \: .
\]
From Lemma~\ref{lemma:difference-of-regularizers} and convexity of $F$, we obtain
\begin{equation}
\label{equation:tail-regret}
\sum_{t=A}^T \eta_t \Exp\left[  F(\bx_t)  - F(\bx_A) \right]
\le \sum_{t=A}^T \Exp\left[ \eta_t \ip{\grad F(\bx_t)}{\bx_t - \bx_A} \right]
\le \sum_{t=A}^T \Exp\left[\phi_{T+1} (\bx_A) - \phi_A(\bx_A) \right]
\le K \sum_{t=A}^T t^{-2\alpha} \: .
\end{equation}
We now apply Lemma~\ref{lemma:last-average} with $q_t=\Exp[F(\bx_t)] - F(\bx^*)$ and get
\begin{align*}
\eta_T \Exp[F(\bx_T) - F(\bx^*)]
& \le \frac{1}{T} \sum_{t=1}^T \eta_t \Exp[F(\bx_t) - F(\bx^*)] + \sum_{k=1}^{T-1} \frac{1}{k(k+1)} \sum_{t=T-k}^T \eta_t \Exp[F(\bx_t) - F(\bx_{T-k})] \\
& \le \frac{1}{T} \sum_{t=1}^T \eta_t \Exp[F(\bx_t) - F(\bx^*)] + K \sum_{k=1}^{T-1} \frac{1}{k(k+1)} \sum_{t=T-k}^T t^{-2\alpha}
\end{align*}
where in second step we used \eqref{equation:tail-regret} with $A=T-k$. We upper bound the first sum
using convexity of $F$ and Lemma~\ref{lemma:regret-bound} as follows
\begin{align*}
\sum_{t=1}^T \eta_t \Exp[F(\bx_t) - F(\bx^*)]
\le \sum_{t=1}^T \eta_t \Exp[\ip{\grad F(\bx_t)}{\bx_t - \bx^*}]
\le 1 + \Exp[\phi_{T+1}(\bx^*)] \: .
\end{align*}
Finally, using Lemma~\ref{lemma:bound-sum-k}, we have
\begin{equation*}
\eta_T \Exp[F(\bx_T) - F(\bx^*)]
\le \frac{1 + \Exp[\phi_{T+1}(\bx^*)]}{T} + K \left( \frac{1}{T} + T^{-2\alpha} + \frac{1}{e(2\alpha - 1)T} \right) \: .
\end{equation*}
We multiply both sides by $1/\eta_T = G T^{\alpha}$ and get
and use $1/T^{1-\alpha} \ge 1/T^{\alpha}$ and $\phi_{T+1}(\bx^*) \le \phi_{\infty}(\bx^*)$, to get
\[
\Exp[F(\bx_T) - F(\bx^*)] \le G \frac{1 + \Exp[\phi_\infty(\bx^*)]}{T^{1-\alpha}} + G K \left( \frac{2}{T^{1-\alpha}} + \frac{1}{e(2\alpha - 1)T^{\alpha-1}} \right) \: .
\]
Substituting the definition of $K$, using Lemma~\ref{lemma:bound-S-Q-psi-non-adaptive} to upper bound $\phi_{\infty}(\bx^*)$ and over-approximating, we obtain the stated result.
% \begin{align*}
% & \Exp[F(\bx_T)] - F(\bx^*) \\
% & \le \frac{G}{T^{1-\alpha}} \left( 1 + \Exp[\phi_\infty(\bx^*)] +  \left( \frac{1}{2} \exp\left(\frac{1}{4} S \right) + 3\norm{\bx^*} + 5(S + 2) \Exp[2 + \phi_\infty(\bx^*)] \right) \left( 2 + \frac{1}{e(2\alpha - 1)} \right) \right) \\
% & \le \frac{G}{T^{1-\alpha}} \left( \frac{1}{2} \exp\left(\frac{1}{4} S \right) + 3\norm{\bx^*} + 6(S + 2) \Exp[2 + \phi_\infty(\bx^*)] \right) \left( 2 + \frac{1}{e(2\alpha - 1)} \right) \\
% & \le \frac{G}{T^{1-\alpha}} \left( \exp(S) + 3\norm{\bx^*} + 6(S + 2) \Exp[2 + \phi_\infty(\bx^*)] \right) \left( 2 + \frac{1}{e(2\alpha - 1)} \right) \: .
% \end{align*}
% The proof is finished by using Lemma~\ref{lemma:bound-S-Q-psi-non-adaptive} to upper bound $\phi_{\infty}(\bx^*)$.
\end{proof}

\section{Discussions on Limitations and Future Work}
\label{section:conclusions}

We have presented the first algorithm that simultaneously achieve the best known
convergence rate on convex function with bounded stochastic gradients and also
guarantees asymptotic convergence with probability one on variationally coherent
functions. In the following, we want to discuss some limitations and possible
future directions.

\paragraph{Alternative assumptions} \cite{Bottou98} uses a slightly different
set of conditions in the definition of variationally coherent functions. He
assumes that for every $\epsilon > 0$ there exists $\delta > 0$ such that
$\ip{\grad F(\bx)}{\bx - \bx^*} > \delta$ whenever $\norm{\bx - \bx^*} >
\epsilon$ and drops the condition of continuous differentiability. His
assumption is incomparable with ours. Nevertheless, our
Theorem~\ref{theorem:variationally-coherent-convergence} would still hold true
as is, with a minor modification of its proof. The advantage of Bottou's
condition is that Theorem~\ref{theorem:variationally-coherent-convergence}
generalizes to (infinite-dimensional) Hilbert spaces, while our argument is
based on the compactness of balls in $\R^d$.

\paragraph{Additional adaptivity}
It is very natural to ask if further adaptivity is possible. For example, one
could think to use data-dependent learning rates that depends on the sum of the
squared norms of the previous gradients. Indeed, we have an additional result in
Appendix~\ref{section:adaptive-learning-rate} that shows that the function value
evaluate on the average iterate would convergence at a rate of $O(1/T)$ if the
gradients are deterministic and it would match the convergence of
Theorem~\ref{theorem:convex-average-non-adaptive} in the stochastic case.
Moreover, the convergence result on variationally coherent functions would still
hold! However, we were unable to prove the convergence of the last iterate for
these learning rates and we leave it as a future direction of work.

\paragraph{Further applications of FTRL with rescaled gradients}
We firmly believe that FTRL with Rescaled Gradients might have many more
applications that the one presented here. The common knowledge in OCO and
optimization literature is that the degree of freedom of choosing the learning
rates in OMD corresponds to the degree of freedom to choosing time-varying
regularizers in FTRL. However, we have shown here that sometimes \emph{both}
degrees of freedom are necessary. Another example is the general form of the
recently proposed dual-stabilized OMD~\citep{FangHPF20}, that with Legendre
regularizer can be verified being an instantiation of FTRL with rescaled
gradients \emph{and} time-varying regularizers~\citep[Proposition
H.5,][]{FangHPF20}.

\paragraph{The need for bounded stochastic gradients}
Parameter-free algorithms have a fundamental limitation in the fact that the
(stochastic) gradient must be bounded and the bound must be known to the
algorithm, due to the lower bound in \citet{CutkoskyB17}. In the deterministic
case, it is enough to use normalized gradients to avoid the knowledge of the
bound on the gradients, as explained in \citet[Section 3.2.3]{Nesterov04}.
Another approach that would work also in the stochastic setting has been
proposed by \citet{Cutkosky19c}, that showed that it is possible to avoid the
knowledge of the maximum gradient norm, paying an additional
$O(\frac{\norm{\bx^*}^3}{T})$ term in the convergence guarantee. Yet, we do not
know how to extend parameter-free algorithm to non-Lipschitz function, for
example, to smooth functions. Note that in our theorems we proved that $\bx_t$
is bounded, that would imply a bounded gradient even with smooth functions. Yet, it
is unclear how to modify to the current proof to argue that $\bx_t$ are bounded
even in the smooth case. On the other hand, it is important to remember that
assuming smoothness is not a weaker assumption than bounded gradients.

\paragraph{Optimality of the results}
As explained in Section~\ref{section:related-work}, it is unclear if these
results are optimal even in the stochastic convex case. We would need a lower
bound for stochastic convex optimization with bounded gradients for unbounded
domains, that is currently missing. Indeed, all the lower bounds we know assume
a bounded domain. We conjecture that a similar lower bound to the one
\citet{StreeterM12} could be proven for stochastic convex optimization.

\bibliography{biblio}
\bibliographystyle{plainnat_nourl}

\appendix

\pagebreak
\section{Proofs and Lemmas for FTRL with Rescaled Gradients}
\label{section:ftrl-proofs}

\begin{proof}[Proof of Lemma~\ref{lemma:ftrl-equality}]
We cancel $\sum_{t=1}^T \ip{\bell_t}{\bx_t}$ on both sides of \eqref{equation:ftrl-equality}.
We get an equivalent equation
$$
- \sum_{t=1}^T \ip{\bell_t}{\bu}
= \phi_{T+1}(\bu) - \min_{\bx \in \R^d} \phi_1(\bx) + H_{T+1}(\bx_{T+1}) - H_{T+1}(\bu) + \sum_{t=1}^T [H_t(\bx_t) - H_{t+1}(\bx_{t+1})] \: .
$$
The sum $\sum_{t=1}^T [H_t(\bx_t) - H_{t+1}(\bx_{t+1})]$ is a telescopic sum equal to $H_1(\bx_1) - H_{T+1}(\bx_{T+1})$. Therefore, \eqref{equation:ftrl-equality} is equivalent to
$$
- \sum_{t=1}^T \ip{\bell_t}{\bu}
= \phi_{T+1}(\bu) - \min_{\bx \in \R^d} \phi_1(\bx) + H_{T+1}(\bx_{T+1}) - H_{T+1}(\bu) + H_1(\bx_1) - H_{T+1}(\bx_{T+1}) \: .
$$
We cancel common terms and use the fact $\min_{\bx \in \R^d} \phi_1(\bx) = H_1(\bx_1)$ and we get
$$
- \sum_{t=1}^T \ip{\bell_t}{\bu}
= \phi_{T+1}(\bu) - H_{T+1}(\bu) \: ,
$$
which holds true by definition of $H_{T+1}(\bu)$.
\end{proof}

\pagebreak
\section{Properties of the Regularizer}
\label{section:regularizer-appendix}

\begin{lemma}[Fenchel conjugate of a function of $\norm{\cdot}$]
\label{lemma:fenchel-conjugate-of-function-of-norm}
Let $f:\R \to \R \cup \{-\infty, +\infty\}$ be even and let $f^*$ be its Fenchel conjugate. Let $g:\R^d
\to \R \cup \{-\infty, +\infty\}$ be defined as $g(\bx) = f(\norm{\bx})$. The Fenchel conjugate of $g$
satisfies $g^*(\btheta) = f^*(\norm{\btheta})$ for every $\btheta \in \R^d$.
\end{lemma}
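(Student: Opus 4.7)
The plan is to unpack the definition of $g^{*}$, reduce the supremum over $\bx\in\R^{d}$ to a one-dimensional supremum over the radius $r=\norm{\bx}$ using Cauchy--Schwarz, and then use the evenness of $f$ to match it with the one-dimensional Fenchel conjugate $f^{*}(\norm{\btheta})$.

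First I would write $g^{*}(\btheta) = \sup_{\bx\in\R^{d}}\ip{\btheta}{\bx} - f(\norm{\bx})$ and split the sup by level sets of $\norm{\bx}$: for each $r\ge 0$, the maximum of $\ip{\btheta}{\bx}$ over $\{\bx:\norm{\bx}=r\}$ equals $r\norm{\btheta}$ by Cauchy--Schwarz (attained at $\bx=r\btheta/\norm{\btheta}$ when $\btheta\ne\bzero$, and trivially $0$ when $\btheta=\bzero$). Therefore
\[
g^{*}(\btheta) \;=\; \sup_{r\ge 0}\bigl(r\norm{\btheta} - f(r)\bigr).
\]

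Next I would show that extending the sup from $r\ge 0$ to $r\in\R$ does not change the value, using evenness. For any $s>0$, substituting $r=-s$ and using $f(-s)=f(s)$,
\[
-s\norm{\btheta} - f(-s) \;=\; -s\norm{\btheta} - f(s) \;\le\; s\norm{\btheta} - f(s),
\]
since $s\ge 0$ and $\norm{\btheta}\ge 0$. Hence $\sup_{r<0}(r\norm{\btheta}-f(r))\le\sup_{r\ge 0}(r\norm{\btheta}-f(r))$, and the two one-dimensional suprema coincide:
\[
g^{*}(\btheta) \;=\; \sup_{r\in\R}\bigl(r\norm{\btheta} - f(r)\bigr) \;=\; f^{*}(\norm{\btheta}).
\]

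The only edge case is $\btheta=\bzero$, which I would handle separately: both sides reduce to $-\inf_{\bx\in\R^{d}}f(\norm{\bx}) = -\inf_{r\ge 0}f(r) = -\inf_{r\in\R}f(r)$, where the last equality again uses that $f$ is even. There is no real obstacle here; the only thing to be careful about is stating the Cauchy--Schwarz step cleanly when $\btheta=\bzero$ so that the parameterization $\bx=r\btheta/\norm{\btheta}$ does not need to be invoked in that degenerate case.
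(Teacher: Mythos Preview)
Your proof is correct and follows essentially the same approach as the paper: decompose $\bx$ into a radius and a direction, maximize over the direction to get $r\norm{\btheta}$, then use evenness of $f$ to extend the one-dimensional supremum from $[0,\infty)$ to $\R$ so it matches the definition of $f^{*}(\norm{\btheta})$. The paper's proof is a one-line chain of equalities using the polar parametrization $\bx=\rho\bz$ with $\norm{\bz}=1$, which is exactly your Cauchy--Schwarz step; your separate treatment of $\btheta=\bzero$ is unnecessary but harmless.
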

\begin{proof}
For any $\btheta \in \R^d$,
\begin{align*}
g^*(\btheta)
& = \sup_{\bx \in \R^d} \ip{\btheta}{\bx} - g(\bx)
= \sup_{\bx \in \R^d} \ip{\btheta}{\bx} - f(\norm{\bx})
= \sup_{\rho \in [0,\infty)} \sup_{\substack{\bz \in \R^d \\ \norm{\bz} = 1}} \ip{\btheta}{\rho \bz} - f(\norm{\rho \bz}) \\
& = \sup_{\rho \in [0, \infty)} \sup_{\substack{\bz \in \R^d \\ \norm{\bz} = 1}} \rho \ip{\btheta}{\bz} - f(\rho)
= \sup_{\rho \in [0, \infty)} \rho \norm{\btheta} - f(\rho)
= \sup_{\rho \in \R} \abs{\rho} \norm{\btheta} - f(\abs{\rho})
= \sup_{\rho \in \R} \rho \norm{\btheta} - f(\abs{\rho}) \\
& = \sup_{\rho \in \R} \rho \norm{\btheta} - f(\rho)
%& \text{(since $f$ is even)} \\
= f^*(\norm{\btheta}),
\end{align*}
where in the second to last equality we used the fact that $f$ is even.
\end{proof}

\begin{lemma}[Properties of Lambert $W$ function]
\label{lemma:lambert-function-properties}
Let $W:[0,\infty) \to [0,\infty)$ be the inverse of the function $f:[0,\infty) \to [0, \infty)$ for $f(x) = x e^x$.
Then, $W$ is a continuous increasing bijection and satisfies
\begin{align*}
W(0) & = 0 \: , &
W(x e^x) & = x \: , &
W(x) e^{W(x)} & = x \: .
\end{align*}
Furthermore, for any $x \in [0,\infty)$,
\begin{equation}
\label{equation:lambert-function-bounds}
\frac{1}{2} \ln(1 + x) \le W(x) \le \ln(1 + x) \: .
\end{equation}
\end{lemma}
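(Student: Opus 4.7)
The plan is to verify the structural claims about $W$ from elementary properties of $f(x) = xe^x$, and then establish the logarithmic sandwich by reducing each inequality to a one-variable calculus exercise via the monotonicity of $W$.

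For the structural part, I would note that $f'(x) = (1+x)e^x > 0$ on $[0,\infty)$, so $f$ is strictly increasing; combined with $f(0) = 0$ and $\lim_{x\to\infty} f(x) = +\infty$, it is a continuous bijection from $[0,\infty)$ onto $[0,\infty)$. The inverse $W$ therefore inherits continuity and strict monotonicity, and the identities $W(0) = 0$, $W(xe^x) = x$, and $W(x)e^{W(x)} = x$ follow directly from the definition of inverse function.

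For the upper bound $W(x) \le \ln(1+x)$, since $W$ is increasing it suffices to show $x \le f(\ln(1+x)) = (1+x)\ln(1+x)$ for all $x \ge 0$. Letting $h(x) = (1+x)\ln(1+x) - x$, one has $h(0) = 0$ and $h'(x) = \ln(1+x) \ge 0$, so $h \ge 0$. For the lower bound $W(x) \ge \tfrac{1}{2}\ln(1+x)$, monotonicity again reduces the claim to $x \ge f\!\left(\tfrac{1}{2}\ln(1+x)\right) = \tfrac{1}{2}\sqrt{1+x}\,\ln(1+x)$; substituting $u = 1+x \ge 1$ turns this into $2(u-1) \ge \sqrt{u}\,\ln u$, which vanishes at $u=1$ and whose derivative comparison reduces to $4\sqrt{u} \ge \ln u + 2$ on $[1,\infty)$, verified by one further derivative check since the function $u \mapsto 4\sqrt{u} - \ln u - 2$ has value $2$ at $u=1$ and derivative $(2\sqrt{u}-1)/u \ge 0$ there.

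The only step requiring any care is the nested monotonicity argument for the lower bound, but it is a routine single-variable computation and poses no substantive obstacle; the whole lemma is essentially an unpacking of the definition of $W$ together with two standard calculus comparisons.
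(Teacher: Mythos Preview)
Your proof is correct and in fact more self-contained than the paper's. For the upper bound, the paper does not argue directly but instead invokes a general inequality of Hoorfar and Hassani, $W(x) \le \ln\!\bigl(\tfrac{x+y}{1+\ln y}\bigr)$, specialized to $y=1$; your direct argument via $h(x) = (1+x)\ln(1+x) - x$ avoids the external citation. For the lower bound, the paper takes a different route: it uses the identity $W(x) = \ln(x/W(x))$ together with the already-established upper bound to obtain $W(x) \ge \ln\!\bigl(x/\ln(1+x)\bigr)$, and then reduces to the inequality $x \ge \sqrt{1+x}\,\ln(1+x)$ --- note this is \emph{twice} the inequality you need, since you apply $f$ directly to $\tfrac12\ln(1+x)$ and only require $x \ge \tfrac12\sqrt{1+x}\,\ln(1+x)$. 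So your monotonicity reduction yields a weaker sufficient inequality and is the more economical path; the paper's detour through $W(x)=\ln(x/W(x))$ has the minor conceptual appeal of tying the lower bound to the upper bound, but at the cost of a stronger intermediate claim. Both approaches finish with routine one-variable calculus.
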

\begin{proof}
The function $f:[0,\infty) \to [0,\infty)$, $f(x) = x e^x$, is an increasing
continuous bijection. Thus its inverse is an increasing continuous bijection.
The properties $W(x e^x) = x$ and $W(x) e^{W(x)} = x$ follow from the definition
of inverse. The property $W(0) = 0$ is a special case of $W(x e^x) = 0$ for $x=0$.

Both inequalities in \eqref{equation:lambert-function-bounds} holds for $x = 0$.
It thus suffices to prove them for $x > 0$. The second inequality in
\eqref{equation:lambert-function-bounds} is a special case of a more general
bound proved by \citet{HoorfarH08},
$$
W(x) \le \ln \left( \frac{x+y}{1 + \log y} \right) \qquad \text{for all $x > - \frac{1}{e}$ and all $y > -\frac{1}{e}$} \: ,
$$
for $y=1$.

To prove the first inequality in \eqref{equation:lambert-function-bounds} we
start from $W(x) e^{W(x)} = x$. We take logarithm of both sides and we get $W(x)
= \ln(x / W(x))$. Using the second inequality in
\eqref{equation:lambert-function-bounds}, we have
\[
W(x)
= \ln \left( \frac{x}{W(x)} \right)
\ge \ln \left( \frac{x}{\ln (1+x)} \right) \: .
\]
It remains to prove that for $x > 0$
\[
\ln \left( \frac{x}{\ln (1+x)} \right) \ge \frac{1}{2} \ln(1 + x) \: ,
\]
which is equivalent to
\[
x \ge \ln(1+x) \sqrt{1 + x} \: .
\]
The last inequality holds for $x=0$ with
equality. We take derivatives of both sides. It remains to prove
\[
1 \ge \frac{1 + \frac{1}{2}\ln(1+x)}{\sqrt{1+x}} \qquad \text{for $x \ge 0$.}
\]
The last inequality is equivalent to
\[
\sqrt{1+x} \ge 1 + \ln(\sqrt{1+x}) \qquad \text{for $x \ge 0$.}
\]
Substituting $\sqrt{1+x} = 1 + z$, we need to prove
\[
\ln(1+z) \le z  \qquad \text{for $z \ge 0$.}
\]
The last inequality is holds for $z=0$. We take derivative of both sides. It remains to prove
\[
\frac{1}{1+z} \le 1 \: ,
\]
which clearly holds for all $z \ge 0$.
\end{proof}

\begin{lemma}[Properties of $\psi$ and $\psi^*$]
\label{lemma:psi-properties}
The functions $\psi^*:\R \times (0, \infty) \times [0,\infty) \to \R$ and
$\psi:\R \times (0, \infty) \times [0,\infty) \to \R$ have the following
properties.
\begin{enumerate}
\item \label{lemma:psi-property-1} For any $S > 0$ and any $Q \ge 0$, the function $\theta \mapsto \psi^*(\theta, S, Q)$ is positive, even, continuously differentiable, strictly convex, decreasing on $(-\infty,0]$ and increasing on $[0,+\infty)$.
\item \label{lemma:psi-property-2} For any $S > 0$ and any $Q \ge 0$, the function $x \mapsto \psi(x, S, Q)$ is even, continuously differentiable, strictly convex, decreasing on $(-\infty,0]$ and increasing on $[0,+\infty)$.
\item \label{lemma:psi-property-3} $\psi(x, S, Q)$ is non-decreasing in its second and third argument.
\item \label{lemma:psi-property-4} If $x \ge 0$, $\frac{\partial \psi(x,S,Q)}{\partial x}$ is non-decreasing in $S$ and $Q$.
\item \label{lemma:psi-property-5} For any $S' \ge S > 0$, $Q' \ge Q \ge 0$, $x' \ge x \ge 0$,
$$
\psi(x,S',Q') - \psi(x,S,Q) \le \psi(x',S',Q') - \psi(x',S,Q) \: .
$$

\item \label{lemma:psi-property-6} For any $S \ge 1$, $Q \ge 0$ and any $x > 0$,
\begin{equation}
\label{equation:first-derivate-bounds}
\sqrt{\ln(1+2x^2)}
\le \frac{\partial \psi(x,S,Q)}{\partial x}
\le x \max\left\{2 S^2 \exp(Q), \ 4+\frac{S^2 + 4 Q-4}{\exp\left(\frac{1}{4} S^2 - Q\right)} \right\}
\end{equation}
\item \label{lemma:psi-property-7} For any $S \ge 1$, $Q \ge 0$ and any $x \in \R \setminus \{0, \pm \frac{1}{2}\exp(\frac{1}{4}S^2 - Q)\}$,
\begin{equation}
\label{equation:second-derivate-lower-bound}
\frac{\min\{2, \sqrt{\ln(1 + 2x^2)}\}}{\abs{x} (\frac{1}{2}S + 1)}
\le \frac{\partial^2 \psi(x,S,Q)}{\partial x^2}
\le \max\left\{ 2S^2 \exp(Q), \ 4\exp\left(Q-\frac{1}{4} S^2 \right)\right\} \: .
\end{equation}
\item For any $S > 0$, $Q \ge 0$, any $x \in \R$,
\begin{equation}
\label{equation:psi-explicit-formula}
\psi(x,S,Q) =
\begin{cases}
- \exp(-Q) & \text{if $x=0$} \: , \\[0.5cm]
S \abs{x} \sqrt{2} \dfrac{W(2 \exp(2Q) S^2 x^2) - 1}{\sqrt{W(2 \exp(2Q) S^2 x^2)}} & \text{if $0 < |x| \le \dfrac{1}{2} \exp\left(\frac{1}{4} S^2 - Q\right)$} \: , \\[0.5cm]
2 \abs{x} \ln (2\abs{x}) + \abs{x} (\frac{1}{2} S^2 + Q - 2) & \text{if $\abs{x} > \dfrac{1}{2} \exp\left(\frac{1}{4} S^2 - Q\right)$} \: ,
\end{cases}
\end{equation}
where $W:[0,\infty) \to [0,\infty)$ is the Lambert W-function, i.e., $W$ is the inverse function of $x \mapsto x e^x$.
\item For any $S \ge 1$, $Q \ge 0$, any $x \in \R$,
$$
\psi(x,S,Q) <  S \abs{x} \left[ 2\ln(1 + 2 x^2) + 3Q + 3S \right] \: .
$$
\end{enumerate}
\end{lemma}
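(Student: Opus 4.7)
The plan is to proceed by case analysis matching the three branches of the explicit formula \eqref{equation:psi-explicit-formula}, and verify the claimed upper bound separately in each regime. In each case the strategy is to massage $\psi(x,S,Q)$ into an expression involving $\ln(1+2x^2)$, $Q$, and $S$, using only the elementary inequalities $W(y) \le \ln(1+y)$ from Lemma~\ref{lemma:lambert-function-properties}, $\ln S \le S$, and the AM--GM-type bound $\sqrt{2a} \le a + \tfrac{1}{2}$ for $a \ge 0$.

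\textbf{Case $x=0$.} Here $\psi(0,S,Q) = -\exp(-Q) < 0 = S\cdot 0 \cdot [\,\cdots\,]$, so the inequality is immediate (strictly).

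\textbf{Case $0 < |x| \le \tfrac{1}{2}\exp(\tfrac{1}{4}S^2 - Q)$.} Let $u = W(2\exp(2Q) S^2 x^2) \ge 0$. Since $\tfrac{u-1}{\sqrt{u}} = \sqrt{u} - \tfrac{1}{\sqrt{u}} \le \sqrt{u}$, we get $\psi(x,S,Q) \le S|x|\sqrt{2u}$. Using $W(y) \le \ln(1+y)$ and the assumption $S \ge 1$, $Q \ge 0$, we have
\[
u \le \ln\bigl(1 + 2\exp(2Q)S^2 x^2\bigr) \le \ln\bigl(\exp(2Q)S^2(1+2x^2)\bigr) = 2Q + 2\ln S + \ln(1+2x^2).
\]
Applying $\sqrt{2u} \le u + \tfrac{1}{2}$ then yields
\[
\psi(x,S,Q) \le S|x|\left[\ln(1+2x^2) + 2Q + 2\ln S + \tfrac{1}{2}\right].
\]
Comparing with the desired bound $S|x|[2\ln(1+2x^2)+3Q+3S]$ and using $2\ln S \le 2S$ and $\tfrac{1}{2} \le S$, it suffices to check that $\ln(1+2x^2)+2Q > 0$, which holds strictly because $x \ne 0$.

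\textbf{Case $|x| > \tfrac{1}{2}\exp(\tfrac{1}{4}S^2 - Q)$.} After dividing by $|x| > 0$, the target reduces to
\[
2\ln(2|x|) + \tfrac{1}{2}S^2 + Q - 2 < S\bigl[2\ln(1+2x^2) + 3Q + 3S\bigr].
\]
Since $2\ln(2|x|) = \ln(4x^2) = \ln 2 + \ln(2x^2) \le \ln 2 + \ln(1+2x^2)$, the left side is at most $\ln(1+2x^2) + \tfrac{1}{2}S^2 + Q + \ln 2 - 2 \le \ln(1+2x^2) + \tfrac{1}{2}S^2 + Q - 1$, which is strictly less than $2S\ln(1+2x^2) + 3SQ + 3S^2$ because $S \ge 1$ majorises each term individually.

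The only mildly delicate step is the second case, where we must combine the $W \le \ln(1+\cdot)$ bound with $\sqrt{2a} \le a + \tfrac{1}{2}$ to linearise the square root; the rest is routine.
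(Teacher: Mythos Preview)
Your proof is correct and follows the same case-by-case strategy as the paper, invoking the Lambert bound $W(y)\le\ln(1+y)$ and absorbing $\ln S$ into $S$; the only cosmetic difference is that in the middle regime the paper bounds $\tfrac{u-1}{\sqrt u}\le u$ in one step to reach $S|x|\sqrt{2}\,u$, whereas you use $\tfrac{u-1}{\sqrt u}\le\sqrt{u}$ and then linearise via $\sqrt{2u}\le u+\tfrac12$. (Minor slip: the leftover slack in that case is $\ln(1+2x^2)+Q$, not $+2Q$, but this is still positive for $x\ne 0$, so the argument stands.)
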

\begin{proof}
$ $ % dummy line

\begin{enumerate}[wide,nosep]
\item Equation \eqref{equation:explicit-formula-for-psi-star} implies that
for any $S > 0$, $Q \ge 0$, the function $\theta \mapsto \psi^*(\theta,S,Q)$ is positive and even.
Its derivative is
$$
\frac{\partial \psi^*(\theta, S, Q)}{\partial \theta}
=
\begin{cases}
\frac{\theta}{2 S^2}\exp\left(\dfrac{\theta^2}{4 S^2}-Q\right) & \text{if $\abs{\theta} \le S^2$,} \\[0.5cm]
\frac{1}{2} \sign(\theta) \exp\left(\dfrac{|\theta|}{2}-\frac{1}{4} S^2 - Q\right) & \text{if $\abs{\theta} > S^2$.}
\end{cases}
$$
The function $\theta \mapsto \frac{\partial \psi^*(\theta, S, Q)}{\partial
\theta}$ is a continuous odd increasing bijection from $\R$ to $\R$ that is
negative on $(-\infty, 0)$ and positive on $(0,+\infty)$. Therefore, $\theta
\mapsto \psi^*(\theta, S, Q)$ is strictly convex, decreasing on $(-\infty,0]$
and increasing on $[0,+\infty)$.

\item Since $\theta \mapsto \psi^*(\theta,S,Q)$ is even,
$$
\psi(x, S, Q)
= \sup_{\theta \in \R} \theta x - \psi^*(\theta, S, Q)
= \sup_{\theta \in \R} \theta x - \psi^*(\abs{\theta}, S, Q)
= \sup_{\theta \in \R} \abs{\theta x} - \psi^*(\abs{\theta}, S, Q) \: ,
$$
and thus, the function $x \mapsto \psi(x, S, Q)$ is even as well.

Since $\psi$ and $\psi^*$ are Fenchel conjugates, the functions $x \mapsto \frac{\partial \psi(x,
S, Q)}{\partial x}$ and $\theta \mapsto \frac{\partial \psi^*(\theta, S,
Q)}{\partial \theta}$ are functional inverses of one another. We can express
$\frac{\partial \psi(x, S, Q)}{\partial x}$ as
\begin{equation}
\label{equation:psi-derivative}
\frac{\partial \psi(x, S, Q)}{\partial x}
=\begin{cases}
\sign(x) S \sqrt{2 W(2 \exp(2Q) S^2 x^2)} & \text{if $\abs{x} \le \dfrac{1}{2} \exp\left(\frac{1}{4} S^2 - Q\right)$,} \\[0.5cm]
\sign(x) \left(2 \ln (2\abs{x}) + \frac{1}{2} S^2 + 2 Q\right) & \text{if $\abs{x} > \dfrac{1}{2} \exp\left(\frac{1}{4} S^2 - Q\right)$,}
\end{cases}
\end{equation}
where $W:[0,\infty) \to [0,\infty)$ is the Lambert function that is the inverse
of the function $x \mapsto x e^x$. Using the properties of the Lambert function
(Lemma~\ref{lemma:lambert-function-properties}), it easy to verify that the
function $x \mapsto \frac{\partial \psi(x, S, Q)}{\partial x}$ is a continuous
odd increasing bijection from $\R$ to $\R$ that is negative on $(-\infty, 0)$
and positive on $(0,+\infty)$. Therefore $x \mapsto \psi(x, S, Q)$ is strictly
convex, decreasing on $(-\infty,0]$ and increasing on $[0, +\infty)$.

\item Equation \eqref{equation:explicit-formula-for-psi-star} implies that
$\psi^*(\theta,S,Q)$ is non-increasing both as a function of $S$ and as a
function of $Q$. From the definition \eqref{equation:psi-definition}, we see
that $\psi(x,S,Q)$ is non-decreasing both as a function of $S$ and as a function
of $Q$.

\item The equation \eqref{equation:psi-derivative} implies that for $x \ge 0$,
the function $S \mapsto \frac{\partial \psi(x, S, Q)}{\partial x}$ is
non-decreasing on the interval $(0,\infty)$. Likewise, for $x \ge 0$, the
function $Q \mapsto \frac{\partial \psi(x, S, Q)}{\partial x}$ is non-decreasing
on the interval $[0,\infty)$.

\item Using the previous property,
\begin{align*}
\psi(x,S',Q') - \psi(x,S,Q)
& = \psi(0,S',Q') - \psi(0,S,Q) + \int_0^x \frac{\partial \psi(y,S',Q')}{\partial y} - \frac{\partial \psi(y,S,Q)}{\partial y} dy \\
& \le \psi(0,S',Q') - \psi(0,S,Q) + \int_0^{x'} \frac{\partial \psi(y,S',Q')}{\partial y} - \frac{\partial \psi(y,S,Q)}{\partial y} dy \\
& = \psi(x',S',Q') - \psi(x',S,Q) \: .
\end{align*}

\item First, we prove the lower bound. If $0 < x \le \frac{1}{2} \exp\left(\frac{1}{4} S^2 - Q\right)$ then
\begin{align*}
\frac{\partial \psi(x, S, Q)}{\partial x}
& = \sqrt{2S^2 W(2 \exp(2Q) S^2 x^2)} \\
& \ge \sqrt{2 W(2 x^2)} & \text{(since $S \ge 1$ and $Q \ge 0$)} \\
& \ge \sqrt{\ln (1 + 2x^2)} & \text{(by Lemma~\ref{lemma:lambert-function-properties})} \: .
\end{align*}
If $x > \frac{1}{2} \exp\left(\frac{1}{4} S^2 - Q\right)$ then $\frac{\partial \psi(x, S, Q)}{\partial x} \ge 1$ and
therefore
\begin{align*}
\frac{\partial \psi(x, S, Q)}{\partial x}
& \ge \sqrt{\frac{\partial \psi(x, S, Q)}{\partial x}} \\
& = \sqrt{2 \ln (2 x) + \frac{1}{2} S^2 + 2 Q} \\
& \ge \sqrt{2 \ln (2x) + \frac{1}{2}} & \text{(since $S \ge 1$ and $Q \ge 0$)} \\
& = \sqrt{\ln(4\sqrt{e}x^2)} \\
& \ge \sqrt{\ln (6x^2)} \\
& \ge \sqrt{\ln (1 + 2x^2)} & \text{(since $x \ge 1/2$)} \: .
\end{align*}

For the upper bound, we study the function $\frac{1}{x}\frac{\partial \psi(x, S, Q)}{\partial x}$. For $0\le x\le \frac{1}{2} \exp\left(\frac{1}{4} S^2 - Q\right)$, we have that
\[
\frac{1}{x}\frac{\partial \psi(x, S, Q)}{\partial x}
= \frac{\sqrt{2S^2 W(2 \exp(2Q) S^2 x^2)}}{x}
= \sqrt{2S^2 \cdot 2 \exp(2Q) S^2}\frac{\sqrt{W(y)}}{\sqrt{y}}
\le 2 S^2 \exp(Q)
\]
where $y=2 \exp(2Q) S^2 x^2 \ge 0$ and since $\frac{\sqrt{W(y)}}{\sqrt{y}}\le 1$ for all $y\ge0$.
In the same way, for $x\ge \frac{1}{2} \exp\left(\frac{1}{4} S^2 - Q\right)$
\[
\frac{1}{x}\frac{\partial \psi(x, S, Q)}{\partial x}
= \frac{2 \ln (2x) + \frac{1}{2} S^2 + 2 Q}{x}
\le \frac{4x - 2 + \frac{1}{2} S^2 + 2 Q}{x}
\le 4+\frac{S^2 + 4 Q-4}{\exp\left(\frac{1}{4} S^2 - Q\right)} \: .
\]
The final upper bound is the sum of the upper bounds.

\item Taking derivative of \eqref{equation:psi-derivative}, we obtain
the second partial derivative,
\[
\frac{\partial^2 \psi(x, S, Q)}{\partial x^2}
=\begin{cases}
\dfrac{\sqrt{2S^2 W(2 \exp(2Q) S^2 x^2)}}{\abs{x} (W(2 \exp(2Q) S^2 x^2)+1)} & \text{if $0 < \abs{x} \le \dfrac{1}{2} \exp\left(\frac{1}{4} S^2 - Q\right)$,} \\[0.5cm]
\dfrac{2}{\abs{x}} & \text{if $\abs{x} > \dfrac{1}{2} \exp\left(\frac{1}{4} S^2 - Q\right)$.}
\end{cases}
\]
The lower bound holds, if $\abs{x} > \frac{1}{2} \exp\left(\frac{1}{4} S^2 - Q\right)$.
If $0 < \abs{x} \le \frac{1}{2} \exp\left(\frac{1}{4} S^2 - Q\right)$, we have
\begin{align*}
\frac{\partial^2 \psi(x, S, Q)}{\partial x^2}
& = \frac{\sqrt{2S^2 W(2 \exp(2Q) S^2 x^2)}}{\abs{x} (W(2 \exp(2Q) S^2 x^2)+1)} \\
& \ge \frac{\sqrt{2S^2 W(2 \exp(2Q) S^2 x^2)}}{\abs{x} (W(2 \exp(2Q) S^2 \frac{1}{4} \exp(\frac{1}{2} S^2 - 2 Q))+1)} & \text{(since $W(\cdot)$ is increasing)} \\
& = \frac{\sqrt{2S^2 W(2 \exp(2Q) S^2 x^2)}}{\abs{x} (W(\frac{1}{2}S^2 \exp(\frac{1}{2} S^2))+1)} \\
& = \frac{\sqrt{2S^2 W(2 \exp(2Q) S^2 x^2)}}{\abs{x} (\frac{1}{2}S^2 + 1)}  \\
& \ge \frac{\sqrt{S^2 \ln(1 + 2 \exp(2Q) S^2 x^2)}}{\abs{x} (\frac{1}{2}S^2 + 1)} & \text{(by Lemma~\ref{lemma:lambert-function-properties})} \\
& \ge \frac{S \sqrt{\ln(1 + 2x^2)}}{\abs{x} (\frac{1}{2}S^2 + 1)} & \text{(since $S \ge 1$ and $Q \ge 0$)} \\
& \ge \frac{\sqrt{\ln(1 + 2x^2)}}{\abs{x} (\frac{1}{2}S + 1)} & \text{(since $S \ge 1$)} \: .
\end{align*}

For the upper bound, if $0 < x \le \frac{1}{2} \exp\left(\frac{1}{4} S^2 - Q\right)$, we have
\begin{align*}
\frac{\partial^2 \psi(x, S, Q)}{\partial x^2}
& = \frac{\sqrt{2S^2 W(2 \exp(2Q) S^2 x^2)}}{\abs{x} (W(2 \exp(2Q) S^2 x^2)+1)}
= \sqrt{2S^2 \cdot 2 \exp(2 Q) S^2} \frac{\sqrt{W(y)}}{\sqrt{y} (W(y)+1)} \\
& = 2S \exp(2 Q) \frac{\sqrt{W(y)}}{\sqrt{y} (W(y)+1)} \: ,
\end{align*}
where $y = 2 \exp(2 Q) S^2 x^2$. It is possible to verify that
$\frac{\sqrt{W(y)}}{\sqrt{y} (W(y)+1)}\le 1$ for $y\ge 0$. Hence, the first
expression of the max follows. The second expression is immediate when $x \ge
\frac{1}{2} \exp\left(\frac{1}{4} S^2 - Q\right)$.

\item
We compute $\psi(0,S,Q)$ as
\[
\psi(0,S,Q)
= \sup_{\theta \in \R} - \psi^*(\theta,S,Q)
= - \inf_{\theta \in \R} \psi^*(\theta,S,Q)
= -\psi^*(0,S,Q)
= - \exp(-Q) \: .
\]
If $0 < \abs{x} \le \frac{1}{2} \exp\left(\frac{1}{4} S^2 - Q\right)$, from
\eqref{equation:psi-derivative} and the fact $x \mapsto \psi(x, S, Q)$ is even
and continuous, we have that
\begin{align*}
\psi(x,S,Q)
& = \psi(0,S,Q) + \int_0^{\abs{x}} \frac{\partial \psi(y, S, Q)}{\partial y} dy
= \psi(0,S,Q) + \int_0^{\abs{x}} \sqrt{2S^2 W(2 \exp(2Q) S^2 y^2)} dy \\
& = \psi(0,S,Q) + S \abs{x}\sqrt{2} \frac{W(2 \exp(2Q) S^2 x^2)-1}{\sqrt{W(2 \exp(2Q) S^2 x^2)}} + \frac{\sqrt{2 S^2}}{\sqrt{2 \exp(2Q) S^2}} \\
& = S \abs{x}\sqrt{2} \frac{W(2 \exp(2Q) S^2 x^2)-1}{\sqrt{W(2 \exp(2Q) S^2 x^2)}} \: .
\end{align*}

If $\abs{x} > \frac{1}{2} \exp\left(\frac{1}{4} S^2 - Q\right)$, let
$x_0=\frac{1}{2} \exp\left(\frac{1}{4} S^2 - Q\right)$. From
\eqref{equation:psi-derivative} and the fact $x \mapsto \psi(x, S, Q)$ is even
and continuous, we have that
\begin{align*}
\psi(x,S,Q)
& = \psi(x_0,S,Q) + \int_{x_0}^{\abs{x}} \frac{\partial \psi(y, S, Q)}{\partial y} dy
= \psi(x_0,S,Q) + \int_{x_0}^{\abs{x}} 2 \ln (2y) + \frac{1}{2} S^2 + 2 Q dy \\
& = \psi(x_0,S,Q) + 2|x| \ln (2\abs{x}) - 2 x_0 \ln(2x_0) + \left(\frac{1}{2} S^2 + 2 Q - 2 \right) (\abs{x} - x_0) \\
& = \psi(x_0,S,Q) + 2|x| \ln (2\abs{x}) - 2 x_0 \left( \frac{1}{4}S^2 - Q \right) + \left(\frac{1}{2} S^2 + 2 Q - 2 \right) (\abs{x} - x_0) \: .
\end{align*}
We express $\psi(x_0,S,Q)$ as
\begin{align*}
\psi(x_0,S,Q)
& = S x_0 \sqrt{2} \frac{W(2 \exp(2Q) S^2 x_0^2)-1}{\sqrt{W(2 \exp(2Q) S^2 x_0^2)}}
= S x_0 \sqrt{2} \frac{W(2 \exp(2Q) S^2 \frac{1}{4} \exp(\frac{1}{2} S^2 - 2Q)) - 1}{\sqrt{W(2 \exp(2Q) S^2 \frac{1}{4} \exp(\frac{1}{2} S^2 - 2Q))}} \\
& = S x_0 \sqrt{2} \frac{W(\frac{1}{2} S^2 \exp(\frac{1}{2} S^2)) - 1}{\sqrt{W(\frac{1}{2} S^2 \exp(\frac{1}{2} S^2))}}
= S x_0 \sqrt{2} \frac{\frac{1}{2} S^2 - 1}{\sqrt{\frac{1}{2} S^2}}
= 2 x_0 \left( \frac{1}{2} S^2 - 1 \right) \: .
\end{align*}
Hence, if $\abs{x} > \frac{1}{2} \exp\left(\frac{1}{4} S^2 - Q\right)$, we have
\begin{align*}
\psi(x,S,Q)
& = 2 x_0 \left( \frac{1}{2} S^2 - 1 \right) + 2|x| \ln (2\abs{x}) - 2 x_0 \left( \frac{1}{4}S^2 - Q \right) + \left(\frac{1}{2} S^2 + 2 Q - 2 \right) (\abs{x} - x_0) \\
& = 2|x| \ln (2\abs{x}) + \abs{x} \left(\frac{1}{2} S^2 + 2 Q - 2 \right) \: .
\end{align*}

\item If $\abs{x} \le \frac{1}{2} \exp(\frac{1}{4}S^2 - Q)$, then
\begin{align*}
\psi(x,S,Q)
& = S \abs{x} \sqrt{2} \frac{W(2 \exp(2Q) S^2 x^2) - 1}{\sqrt{W(2 \exp(2Q) S^2 x^2)}} \\
& \le S \abs{x} \sqrt{2} W(2 \exp(2Q) S^2 x^2) & \text{(since $\tfrac{W-1}{\sqrt{W}} \le W$ for $W \ge 0$)} \\
& \le S \abs{x} \sqrt{2} \ln(1 + 2 \exp(2Q) S^2 x^2) & \text{(Lemma~\ref{lemma:lambert-function-properties})} \\
& \le S \abs{x} \sqrt{2} \ln(\exp(2Q) S^2 + 2 \exp(2Q) S^2 x^2) & \text{(since $S \ge 1$ and $Q \ge 0$)} \\
& \le S \abs{x} \sqrt{2} \left[\ln(1 + 2 x^2) + \ln(\exp(2Q)S^2) \right] \\
& = S \abs{x} \sqrt{2} \left[ \ln(1 + 2 x^2) + 2Q + 2 \ln S \right] \\
& \le S \abs{x} \sqrt{2} \left[ \ln(1 + 2 x^2) + 2Q + 2S \right] \\
& < S \abs{x} \left[ 2\ln(1 + 2 x^2) + 3Q + 3S \right] \: .
\end{align*}
If $\abs{x} > \frac{1}{2} \exp(\frac{1}{4}S^2 - Q)$, then
\begin{align*}
\psi(x,S,Q)
& = 2 \abs{x} \ln (2\abs{x}) + \abs{x} \left( \frac{1}{2} S^2 + Q - 2 \right)
\le 2 S \abs{x} \ln (2\abs{x}) + \abs{x} \left( \frac{1}{2} S^2 + Q - 2 \right) \\
& \le 2 S \abs{x} \ln (1 + 2x^2) + \abs{x} \left( \frac{1}{2} S^2 + Q - 2 \right)
\le 2 S \abs{x} \ln (1 + 2x^2) + \abs{x} \left( \frac{1}{2} S^2 + SQ \right) \\
& = S \abs{x} \left[ 2 \ln (1 + 2x^2) + \frac{1}{2} S + Q \right]
< S \abs{x} \left[ 2 \ln (1 + 2x^2) + 3S + 3Q \right] \: .
\end{align*}
\end{enumerate}
\end{proof}

\begin{lemma}[Useful inequality]
\label{lemma:useful-inequality}
Let $a_0 \in \R$ and let $a_1, a_2, \dots, a_T \in [0,\infty)$.
Let $f:[a_0, \sum_{t=0}^T a_t] \to \R$ be a non-increasing function. Then,
\[
\sum_{t=1}^T a_t f\left(a_0 + \sum_{i=1}^{t} a_i\right) \le \int_{a_0}^{\sum_{t=0}^T a_t} f(x) dx \: .
\]
\end{lemma}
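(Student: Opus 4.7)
The plan is to exploit the monotonicity of $f$ by partitioning the integration interval into sub-intervals matching the terms in the sum, then lower-bounding the integral over each sub-interval by the corresponding term of the sum.

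First, I would introduce the partial sums $b_t = a_0 + \sum_{i=1}^{t} a_i$ for $t = 0, 1, \dots, T$, so that $b_0 = a_0$ and $b_T = \sum_{t=0}^T a_t$. Since each $a_t \ge 0$, the sequence $b_0 \le b_1 \le \dots \le b_T$ is non-decreasing, and the intervals $[b_{t-1}, b_t]$ partition $[a_0, b_T]$ (up to measure zero overlap at endpoints), each with length $b_t - b_{t-1} = a_t$.

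Next, on each interval $[b_{t-1}, b_t]$, the assumption that $f$ is non-increasing gives $f(x) \ge f(b_t)$ for all $x \in [b_{t-1}, b_t]$. Integrating yields
\[
\int_{b_{t-1}}^{b_t} f(x)\, dx \ge a_t \, f(b_t) = a_t \, f\!\left(a_0 + \sum_{i=1}^{t} a_i\right),
\]
which holds trivially (as $0 \ge 0$) in the degenerate case $a_t = 0$. Summing over $t = 1, \dots, T$ and using additivity of the integral over the partition gives
\[
\sum_{t=1}^T a_t \, f\!\left(a_0 + \sum_{i=1}^{t} a_i\right) \le \sum_{t=1}^T \int_{b_{t-1}}^{b_t} f(x)\, dx = \int_{a_0}^{b_T} f(x)\, dx,
\]
which is precisely the claim.

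There is no real obstacle: the result is the standard right-endpoint Riemann-sum lower bound on the integral of a non-increasing function, adapted to non-uniform step sizes. The only minor care required is to handle the case where some $a_t$ are zero (trivial) and to verify that the sub-intervals indeed partition $[a_0, \sum_{t=0}^T a_t]$, which follows directly from the definition of $b_t$.
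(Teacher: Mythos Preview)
Your proposal is correct and takes essentially the same approach as the paper: define the partial sums, use that $f$ is non-increasing to bound each term $a_t f(b_t)$ by $\int_{b_{t-1}}^{b_t} f(x)\,dx$, and sum. The paper's proof is identical up to notation (it writes $s_t$ for your $b_t$).
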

\begin{proof}
Denote by $s_t=\sum_{i=0}^{t} a_i$ for $t=0,1,2,\dots,T$ and note that $s_0 \le s_1 \le \dots \le s_T$. Then, for any $t=1,2,\dots,T$,
\[
a_t f\left(a_0+ \sum_{i=1}^{t} a_i\right) = a_t f(s_t) = \int_{s_{t-1}}^{s_t} f(s_t) d x \le \int_{s_{t-1}}^{s_t} f(x) dx \: .
\]
Summing over $t=1,2,\dots, T$, we have the stated bound.
\end{proof}

\begin{lemma}[Bound on $Q_T$]
\label{lemma:bound-Q}
For any $T \ge 0$, $Q_T \le 2 \ln S_T$.
\end{lemma}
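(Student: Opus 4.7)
The plan is to unroll the recursion for $Q_T$, turn it into a sum of ratios involving the $S_t^2$, and then bound that sum by an integral using Lemma~\ref{lemma:useful-inequality}. From the update rule, we have $Q_T = \sum_{t=1}^T \frac{\norm{\bell_t}^2}{S_t^2}$ (reading the denominator as $S_t^2$, consistent with the use of $Q_T$ alongside $S_T^2$ elsewhere in the paper). The key algebraic observation is that since $S_t^2 = S_{t-1}^2 + \norm{\bell_t}^2$, the numerators form a telescoping difference: $\norm{\bell_t}^2 = S_t^2 - S_{t-1}^2$.

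After this rewrite, $Q_T = \sum_{t=1}^T \frac{S_t^2 - S_{t-1}^2}{S_t^2}$, which has exactly the form required to apply Lemma~\ref{lemma:useful-inequality} with $a_0 = S_0^2 = 4$, $a_t = \norm{\bell_t}^2$ for $t \ge 1$, and the non-increasing function $f(x) = 1/x$ on $[4, S_T^2]$. The lemma then yields
\[
Q_T \le \int_{4}^{S_T^2} \frac{1}{x}\, dx = \ln(S_T^2) - \ln 4 = 2\ln S_T - \ln 4 \le 2\ln S_T,
\]
which is the desired bound.

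There is no real obstacle here; the proof is essentially a sum-to-integral estimate combined with the telescoping structure of the $S_t^2$ recursion. The only thing to verify carefully is that Lemma~\ref{lemma:useful-inequality} is applicable, which requires $f(x)=1/x$ to be non-increasing on the relevant interval (trivially true since $S_0^2 = 4 > 0$) and the partial sums $4 + \sum_{i=1}^t \norm{\bell_i}^2 = S_t^2$ to land inside that interval (true by definition of $S_t^2$). The initialization $S_0^2 = 4$ is precisely what gives the harmless $-\ln 4$ slack that lets us drop it and obtain the clean bound $Q_T \le 2\ln S_T$.
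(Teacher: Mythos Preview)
Your proof is correct and essentially identical to the paper's own argument: both apply Lemma~\ref{lemma:useful-inequality} with $a_0=4$, $a_t=\norm{\bell_t}^2$, and $f(x)=1/x$ to bound $Q_T \le \int_4^{S_T^2} x^{-1}\,dx \le 2\ln S_T$. The only cosmetic difference is that the paper upper-bounds $\int_4^{S_T^2}$ by $\int_1^{S_T^2}$ to get $2\ln S_T$ directly, whereas you compute $2\ln S_T - \ln 4$ and then drop the $-\ln 4$; your reading of the denominator as $S_t^2$ (rather than the $S_t$ appearing in the algorithm pseudocode) is also confirmed elsewhere in the paper.
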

\begin{proof}
We use Lemma~\ref{lemma:useful-inequality} with $f(x)=1/x$ and $a_0=4$ and $a_t=\norm{\bell_t}^2$. We have
\begin{align*}
Q_T
= \sum_{t=1}^T a_t f\left(a_0 + \sum_{i=1}^{t} a_i\right)
\le \int_{a_0}^{\sum_{t=0}^T a_t} f(x) dx
= \int_4^{S_T^2} f(x) dx
\le \int_1^{S_T^2} f(x) dx
= 2 \ln S_T \: .
\end{align*}
\end{proof}

\begin{lemma}[Limits $S_\infty$, $Q_\infty$, $\psi_\infty$]
\label{lemma:limits}
For any $\bx \in \R^d$, the sequences $\{S_t\}_{t=0}^\infty$,
$\{Q_t\}_{t=0}^\infty$, $\{\phi_t(\bx)\}_{t=1}^\infty$ are non-decreasing.
Furthermore, the assumption \eqref{equation:learning-rates-assumption-1} implies
that the sequences have finite limits
\begin{align*}
S_\infty & = \lim_{t \to \infty} S_t && \text{almost surely} \: , \\
Q_\infty & = \lim_{t \to \infty} Q_t && \text{almost surely} \: , \\
\phi_{\infty}(\bx) & = \lim_{t \to \infty} \phi_t(\bx) = \psi \left(\norm{\bx}, S_{\infty}, Q_{\infty} \right) && \text{almost surely}
\end{align*}
and $S_\infty, Q_\infty, \phi_{\infty}(\bx)$ are bounded random variables.
\end{lemma}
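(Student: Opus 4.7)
The plan is to verify monotonicity directly from the update equations and then bound the limits using assumption \eqref{equation:learning-rates-assumption-1} together with Lemma~\ref{lemma:bound-Q} already proved above.

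First, I would establish monotonicity coordinate by coordinate. For $S_t$, the recursion $S_t^2 = S_{t-1}^2 + \norm{\bell_t}^2$ immediately gives $S_t \ge S_{t-1}$ because $\norm{\bell_t}^2 \ge 0$. For $Q_t$, the recursion $Q_t = Q_{t-1} + \norm{\bell_t}^2/S_t$ gives $Q_t \ge Q_{t-1}$ for the same reason (together with $S_t > 0$). For $\phi_t(\bx) = \psi(\norm{\bx}, S_{t-1}, Q_{t-1})$, monotonicity follows from item~\ref{lemma:psi-property-3} of Lemma~\ref{lemma:psi-properties}, which states that $\psi(x, S, Q)$ is non-decreasing in both $S$ and $Q$, applied to the already-established monotonicity of $\{S_{t-1}\}$ and $\{Q_{t-1}\}$.

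Next, I would use assumption \eqref{equation:learning-rates-assumption-1} to get uniform upper bounds. Since $\bell_t = \eta_t \bg_t$, the assumption gives $\sum_{t=1}^\infty \norm{\bell_t}^2 < \gamma$ almost surely. Unrolling the $S_t^2$ recursion,
\[
S_t^2 = 4 + \sum_{i=1}^t \norm{\bell_i}^2 \le 4 + \gamma \qquad \text{almost surely,}
\]
so $S_t \le \sqrt{4+\gamma}$ almost surely. Then Lemma~\ref{lemma:bound-Q} gives
\[
Q_t \le 2 \ln S_t \le \ln(4 + \gamma) \qquad \text{almost surely.}
\]
Being monotone and bounded almost surely, both sequences converge almost surely to random variables $S_\infty, Q_\infty$ which are themselves bounded almost surely.

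Finally, for $\phi_\infty(\bx)$, the explicit formula \eqref{equation:psi-explicit-formula} in item 8 of Lemma~\ref{lemma:psi-properties} (or, equivalently, the continuous differentiability asserted in items~\ref{lemma:psi-property-1}--\ref{lemma:psi-property-2}) shows that $\psi$ is jointly continuous in $(S, Q)$ on $(0,\infty) \times [0,\infty)$. Hence
\[
\phi_t(\bx) = \psi(\norm{\bx}, S_{t-1}, Q_{t-1}) \xrightarrow[t \to \infty]{} \psi(\norm{\bx}, S_\infty, Q_\infty) = \phi_\infty(\bx) \qquad \text{almost surely,}
\]
and the monotonicity from item~\ref{lemma:psi-property-3} together with the bound $\psi(\norm{\bx}, S_\infty, Q_\infty) \le \psi(\norm{\bx}, \sqrt{4+\gamma}, \ln(4+\gamma))$ shows that $\phi_\infty(\bx)$ is a bounded random variable for each fixed $\bx$. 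There is no real obstacle here; the only thing one needs to be careful about is invoking continuity of $\psi$ at the possibly degenerate argument $S=S_0=2$ (which is in the interior of $(0,\infty)$, so no issue) and ensuring one is citing the correct monotonicity and continuity properties from Lemma~\ref{lemma:psi-properties}.
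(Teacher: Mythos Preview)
Your proposal is correct and follows essentially the same approach as the paper's own proof: monotonicity from the recursions and item~\ref{lemma:psi-property-3} of Lemma~\ref{lemma:psi-properties}, the bound $S_t \le \sqrt{4+\gamma}$ from assumption~\eqref{equation:learning-rates-assumption-1}, the bound $Q_t \le \ln(4+\gamma)$ via Lemma~\ref{lemma:bound-Q}, and then continuity of $\psi$ together with its monotonicity in $(S,Q)$ to handle $\phi_\infty(\bx)$.
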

\begin{proof}
According to the definition of Algorithm~\ref{algorithm:ftrl-rescaled-gradients-exponetial-potential},
\begin{align}
S_t & = \sqrt{4 + \sum_{i=1}^t \norm{\bell_i}^2} = \sqrt{4 + \sum_{i=1}^t \eta_i^2 \norm{\bg_i}^2} \: , \\
Q_t & = \sum_{i=1}^t \frac{\norm{\bell_i}^2}{S_i^2} = \sum_{i=1}^t \frac{\eta_i^2 \norm{\bg_i}^2}{S_i^2} \: .
\end{align}
Clearly, the sequences $\{S_t\}_{t=0}^\infty$, $\{Q_t\}_{t=0}^\infty$ are
non-decreasing and satisfy $S_t \ge 2$ and $Q_t \ge 0$. Assumption
\eqref{equation:learning-rates-assumption-1} implies that $S_t < \sqrt{4 +
\gamma}$. Therefore, the limit $S_\infty$ exists, is finite and $2 \le S_\infty
\le \sqrt{4 + \gamma}$. Thus, the random variable $S_\infty$ is bounded. By
Lemma~\ref{lemma:bound-Q}, $Q_t \le 2 \ln S_t < \ln (4 + \gamma)$. Therefore,
the limit $Q_\infty$ exists, finite, and $0 \le Q_\infty \le \ln (4 + \gamma)$.
Thus, the random variable $Q_\infty$ is bounded.

By Lemma~\ref{lemma:psi-properties}, $\psi(\norm{\bx}, S_{t-1}, Q_{t-1}) \le
\psi(\norm{\bx}, S_t, Q_t)$. In other words, $\{\phi_t(\bx)\}_{t=1}^\infty$ is a
non-decreasing sequence. Lemma~\ref{lemma:psi-properties} also implies that
$\psi(x, S_\infty, Q_\infty)$ is continuous as function on $\R \times (0,
\infty) \times [0, \infty)$. Therefore,
$$
\lim_{t \to \infty} \phi_t(\bx)
= \lim_{t \to \infty} \psi(\norm{\bx}, S_{t-1}, Q_{t-1})
= \psi \left(\norm{\bx}, \lim_{t \to \infty} S_{t-1}, \lim_{t \to \infty} Q_{t-1} \right)
= \psi(\norm{\bx}, S_\infty, Q_\infty) \: .
$$
Lemma~\ref{lemma:psi-properties} also implies that
$$
\psi(\norm{\bx}, 2, 0)
\le \psi(\norm{\bx}, S_\infty, Q_\infty)
\le \psi \left(\norm{\bx}, \sqrt{4 + \gamma}, \ln (4 + \gamma) \right) \: .
$$
Therefore, the random variable $\phi_\infty(\bx) = \psi(\norm{\bx}, S_\infty,
Q_\infty)$ is bounded.
\end{proof}

For simplicity, we prove the next lemma using first principles, but it is also possible to observe that $\beta_t$ itself is the output of a certain FTRL algorithm over a constrained set with strongly convex losses.
\begin{lemma}[Key inequality]
\label{lemma:key-inequality}
Let $H_t$ be defined by \eqref{equation:ftrl-objective} and $\psi_t$ be defined
by \eqref{equation:regularizer}. If $\norm{\bell_t} \le 1$ then
\begin{equation}
\label{equation:key-inequality}
H_t(\bx_t) - H_{t+1}(\bx_{t+1}) + \ip{\bell_t}{\bx_t} \le 0 \: .
\end{equation}
\end{lemma}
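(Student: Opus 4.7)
The plan is to translate the claim into a statement about the Fenchel duals $\phi_t^*$ via the standard FTRL identity, exploit the variational representation \eqref{equation:psi-star-definition}, and close the argument with an elementary scalar bound. Since $\bx_t = \argmin_\bx H_t(\bx)$, by definition of the Fenchel conjugate we have $H_t(\bx_t) = -\phi_t^*(\btheta_{t-1})$ and $H_{t+1}(\bx_{t+1}) = -\phi_{t+1}^*(\btheta_t)$, so the target inequality becomes
\[
\phi_{t+1}^*(\btheta_t) - \phi_t^*(\btheta_{t-1}) + \ip{\bell_t}{\bx_t} \;\le\; 0.
\]

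Next I would pass to the vectorized form of \eqref{equation:psi-star-definition}, writing $\phi_t^*(\btheta) = \exp\bigl(\max_{\|\bb\|\le 1/2} \ip{\bb}{\btheta} - \|\bb\|^2 S_{t-1}^2 - Q_{t-1}\bigr)$, with argmax $\bb_t$ aligned with $\btheta_{t-1}$ (the vector rendering of the scalar $\beta_t \in [-\tfrac12,\tfrac12]$). The envelope theorem gives $\bx_t = \grad \phi_t^*(\btheta_{t-1}) = \phi_t^*(\btheta_{t-1})\,\bb_t$, so the goal reduces to $\phi_{t+1}^*(\btheta_t) \le \phi_t^*(\btheta_{t-1})(1 - \ip{\bell_t}{\bb_t})$. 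The assumption $\|\bell_t\| \le 1$ combined with $\|\bb_t\| \le 1/2$ gives $|\ip{\bell_t}{\bb_t}| \le 1/2$, keeping both sides positive so that we can take logarithms. Using the key additive identity $v_{t+1}(\bb) = v_t(\bb) - \ip{\bb}{\bell_t} - \|\bb\|^2 \|\bell_t\|^2 - \|\bell_t\|^2/S_t$ for the inner objectives $v_t(\bb) = \ip{\bb}{\btheta_{t-1}} - \|\bb\|^2 S_{t-1}^2 - Q_{t-1}$, I would upper-bound the new maximum at its argmax $\bb_{t+1}$ using the $(2 S_{t-1}^2)$-strong concavity of $v_t$ and an AM--GM split of the cross term $\ip{\bb_{t+1}-\bb_t}{\bell_t}$. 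The remaining scalar step is $\ln(1-u) \ge -u - u^2$ for $u \in [-1/2, 1/2]$, which I would verify by showing that $g(u) = u + u^2 + \ln(1-u)$ attains its global minimum $0$ at $u=0$ via the derivative $g'(u) = u(1-2u)/(1-u)$.

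The main obstacle is the tight bookkeeping of this scalar step: the AM--GM surplus $\|\bell_t\|^2/(4 S_{t-1}^2)$ and the $Q$-update decrement $\|\bell_t\|^2/S_t$ must together dominate the Cauchy--Schwarz slack $\ip{\bell_t}{\bb_t}^2$, and in certain regimes this forces one to retain the $-\|\bb_{t+1}\|^2 \|\bell_t\|^2$ penalty that arises from the $S_t^2 = S_{t-1}^2 + \|\bell_t\|^2$ update. A case split between the quadratic ($\|\btheta_{t-1}\| \le S_{t-1}^2$) and linear ($\|\btheta_{t-1}\| > S_{t-1}^2$) regimes of $\psi^*$ would make the verification uniform. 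As the authors hint, a conceptually cleaner alternative views $\{\bb_t\}$ as the iterates of an auxiliary FTRL algorithm on the ball $\{\|\bb\| \le 1/2\}$ with strongly convex quadratic losses, whereupon the key inequality is immediate from the standard FTRL regret identity.
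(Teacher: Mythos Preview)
Your plan coincides with the paper's proof in every structural step: rewrite via $H_t(\bx_t)=-\phi_t^*(\btheta_{t-1})$, observe $\bx_t=\phi_t^*(\btheta_{t-1})\,\bbeta_t$ with $\bbeta_t$ the argmax in \eqref{equation:psi-star-definition}, reduce to $\phi_{t+1}^*(\btheta_t)\le\phi_t^*(\btheta_{t-1})(1-\ip{\bell_t}{\bbeta_t})$, and apply $1-u\ge e^{-u-u^2}$ (followed by Cauchy--Schwarz $\ip{\bell_t}{\bbeta_t}^2\le\|\bell_t\|^2\|\bbeta_t\|^2$) to land on a quadratic inequality in the $\bbeta$'s. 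Two small corrections: the $Q$-update is $\|\bell_t\|^2/S_t^2$, not $\|\bell_t\|^2/S_t$ (the algorithm pseudocode has a typo, but all proofs use $S_t^2$); and the paper's case split is three-way, on whether each of $\|\btheta_{t-1}\|$ and $\|\btheta_t\|$ lies above or below its threshold $S_{t-1}^2$, $S_t^2$.

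The one place where your sketch and the paper's execution diverge is the last reduction. Your ``strong concavity of $v_t$ plus AM--GM on $\ip{\bbeta_{t+1}-\bbeta_t}{\bell_t}$'' route does not close on its own: after those two steps you are left needing $\|\bbeta_t\|^2\le\|\bbeta_{t+1}\|^2+1/S_t^2-1/(4S_{t-1}^2)$, which can fail (take $\|\bbeta_t\|=1/2$ and $\|\bbeta_{t+1}\|$ small). The paper instead reduces to the clean target
\[
\ip{\btheta_t}{\bbeta_{t+1}}-\|\bbeta_{t+1}\|^2 S_t^2 \;\le\; \ip{\btheta_t}{\bbeta_t}-\|\bbeta_t\|^2 S_t^2 + \frac{\|\bell_t\|^2}{S_t^2}
\]
and proves it by direct computation of $\|\bbeta_{t+1}-\bbeta_t\|$ (or of $\ip{\btheta_t}{\bbeta_{t+1}-\bbeta_t}$ in the linear--linear case), exploiting the explicit formulas for $\bbeta_t,\bbeta_{t+1}$ together with $\btheta_t=\btheta_{t-1}-\bell_t$ and $S_t^2=S_{t-1}^2+\|\bell_t\|^2$. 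In the two ``mixed/quadratic'' cases this yields $\|\bbeta_{t+1}-\bbeta_t\|^2\le\|\bell_t\|^2/S_t^4$, which combined with the $2S_t^2$-strong concavity of $\bb\mapsto\ip{\btheta_t}{\bb}-\|\bb\|^2 S_t^2$ gives exactly the $\|\bell_t\|^2/S_t^2$ slack. So the strong-concavity intuition is right, but the work is in bounding $\|\bbeta_{t+1}-\bbeta_t\|$ case by case, not in AM--GM; your anticipated case split is precisely what is required.
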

\begin{proof}
Since $\phi_t^*$ is the Fenchel conjugate of $\phi_t$, $H_t(\bx_t) = -
\phi^*_t(\btheta_{t-1})$, $\phi^*_{t+1}(\btheta_t) = - \phi^*_t(\btheta_{t-1})$
and $\bx_t = \grad \phi^*_t(\btheta_{t-1})$. Therefore, the left-hand side of
\eqref{equation:key-inequality} equals to
$$
\phi^*_{t+1}(\btheta_t) - \phi^*_t(\btheta_{t-1}) + \ip{\bell_t}{\grad \phi^*_t(\btheta_{t-1})} \: .
$$
Since
\begin{align*}
\phi^*_t(\btheta_{t-1}) & =
\begin{cases}
\exp\left( \dfrac{\norm{\btheta}^2}{4S_{t-1}^2} - Q_{t-1} \right) &   \text{if $\norm{\btheta} \le S_{t-1}^2$,} \\[0.5cm]
\exp\left( \dfrac{\norm{\btheta}}{2} - \frac{1}{4} S_{t-1}^2 - Q_{t-1} \right) &  \text{if $\norm{\btheta} > S_{t-1}^2$,}
\end{cases}
\: ,
\end{align*}
the gradient $\grad \phi^*_t(\btheta_{t-1})$ can be expressed as
$$
\grad \phi^*_t(\btheta_{t-1}) = \bbeta_t \phi^*_t(\btheta_{t-1})
$$
where
\begin{equation}
\label{equation:beta}
\bbeta_t
=
\begin{cases}
\dfrac{\btheta_{t-1}}{2 S_{t-1}^2} & \text{if $\norm{\btheta_{t-1}} \le S_{t-1}^2$,} \\[0.5cm]
\dfrac{\btheta_{t-1}}{2 \norm{\btheta_{t-1}}} & \text{if $\norm{\btheta_{t-1}} > S_{t-1}^2$.}
\end{cases}
\end{equation}
Therefore, the left-hand side of \eqref{equation:key-inequality} equals to
$$
\phi^*_{t+1}(\btheta_t) - \phi^*_t(\btheta_{t-1}) (1 - \ip{\bell_t}{\bbeta_t}) \: .
$$
Since $\norm{\bbeta_t} \le 1/2$ and $\norm{\bell_t} \le 1$ and therefore
$\ip{\bbeta_t}{\bell_t} \in [-\frac{1}{2}, \frac{1}{2}]$. Since $1-x \ge
\exp(-x-x^2)$ for any $x \in [-\frac{1}{2}, \frac{1}{2}]$ and $\phi_t^*(\cdot)$
is non-negative, the last expression can be upper bounded as
\begin{align*}
\phi^*_{t+1}(\btheta_t) - \phi^*_t(\btheta_{t-1}) (1 - \ip{\bell_t}{\bbeta_t})
& \le \phi^*_{t+1}(\btheta_t) - \phi^*_t (\btheta_{t-1}) \exp\left(- \ip{\bell_t}{\bbeta_t} - (\ip{\bell_t}{\bbeta_t})^2 \right) \\
& \le \phi^*_{t+1}(\btheta_t) - \phi^*_t (\btheta_{t-1}) \exp\left(- \ip{\bell_t}{\bbeta_t} - \norm{\bell_t}^2 \norm{\bbeta_t}^2 \right) \: .
\end{align*}
It remains to show that
$$
\phi^*_{t+1}(\btheta_t) - \phi^*_t (\btheta_{t-1}) \exp\left(- \ip{\bell_t}{\bbeta_t} - \norm{\bell_t}^2 \norm{\bbeta_t}^2 \right) \le 0 \: ,
$$
which is equivalent to
$$
\phi^*_{t+1}(\btheta_t) \le \phi^*_t(\btheta_{t-1}) \exp\left(- \ip{\bell_t}{\bbeta_t} - \norm{\bbeta_t}^2 \norm{\bell_t}^2 \right)  \: .
$$
We express $\phi^*_{t+1}(\btheta_t)$ and $\phi^*_t(\btheta_{t-1})$ using an
explicit formula
$$
\phi^*_t(\btheta_{t-1}) = \exp\left( \ip{\btheta_{t-1}}{\bbeta_t} - \norm{\bbeta_t}^2 S_{t-1}^2 - Q_{t-1} \right) \: ,
$$
where $\bbeta_t$ is defined by \eqref{equation:beta}. We take logarithm of both
sides and get an equivalent inequality
$$
\ip{\btheta_t}{\bbeta_{t+1}} - \norm{\bbeta_{t+1}}^2 S_t^2 - Q_t \le \ip{\btheta_{t-1}}{\bbeta_t} - \norm{\bbeta_t}^2 S_{t-1}^2 - Q_{t-1} - \ip{\bell_t}{\bbeta_t} -  \norm{\bell_t}^2 \norm{\bbeta_t}^2 \: .
$$
Using the definition $Q_t$ and $Q_{t-1}$, this is equivalent to
$$
\ip{\btheta_t}{\bbeta_{t+1}} - \norm{\bbeta_{t+1}}^2 S_t^2 \le \ip{\btheta_{t-1}}{\bbeta_t} - \norm{\bbeta_t}^2 S_{t-1}^2 + \frac{\norm{\bell_t}^2}{S_t^2} - \ip{\bell_t}{\bbeta_t} -  \norm{\bell_t}^2 \norm{\bbeta_t}^2 \: .
$$
Using the definition of $S_t$ and $S_{t-1}$, this is equivalent to
$$
\ip{\btheta_t}{\bbeta_{t+1}} - \norm{\bbeta_{t+1}}^2 S_t^2 \le \ip{\btheta_{t-1}}{\bbeta_t} - \norm{\bbeta_t}^2 S_t^2 + \frac{\norm{\bell_t}^2}{S_t^2} - \ip{\bell_t}{\bbeta_t} \: .
$$
Using the definition of $\btheta_{t-1}$ and $\btheta_t$, this is equivalent to
\begin{equation}
\label{lemma:key-inequality-proof}
\ip{\btheta_t}{\bbeta_{t+1}} - \norm{\bbeta_{t+1}}^2 S_t^2 \le \ip{\btheta_t}{\bbeta_t} - \norm{\bbeta_t}^2 S_t^2 + \frac{\norm{\bell_t}^2}{S_t^2} \: .
\end{equation}
We prove \eqref{lemma:key-inequality-proof} by considering several cases.

\textbf{Case $\norm{\btheta_{t-1}} > S_{t-1}^2$, $\norm{\btheta_t} > S_t^2$.}
In this case, $\bbeta_t = \frac{\btheta_{t-1}}{2 \norm{\btheta_{t-1}}}$, $\bbeta_{t+1} = \frac{\btheta_t}{2 \norm{\btheta_t}}$
and $\norm{\bbeta_t} = \norm{\bbeta_{t+1}}=1/2$. The inequality \eqref{lemma:key-inequality-proof} becomes
$$
\ip{\btheta_t}{\bbeta_{t+1}} - \frac{1}{4} S_t^2 \le \ip{\btheta_t}{\bbeta_t} - \frac{1}{4} S_t^2 + \frac{\norm{\bell_t}^2}{S_t^2} \: .
$$
Multiplying by $2$, cancelling common terms and rearranging terms, we get an equivalent inequality
$$
\ip{\btheta_t}{\bbeta_{t+1} - \bbeta_t} \le \frac{\norm{\bell_t}^2}{S_t^2} \: .
$$
The last inequality follows since
\begin{align*}
\ip{\btheta_t}{\bbeta_{t+1} - \bbeta_t}
& = \frac{1}{2} \ip{\btheta_t}{\frac{\btheta_t}{\norm{\btheta_t}} - \frac{\btheta_{t-1}}{\norm{\btheta_{t-1}}}} \\
& = \frac{1}{2} \ip{\btheta_{t-1} - \bell_t}{\frac{\btheta_{t-1} - \bell_t}{\norm{\btheta_{t-1} - \bell_t}} - \frac{\btheta_{t-1}}{\norm{\btheta_{t-1}}}} \\
& = \frac{1}{2} \norm{\btheta_{t-1} - \bell_t} - \frac{1}{2} \norm{\btheta_{t-1}} + \frac{1}{2} \ip{\bell_t}{\frac{\btheta_{t-1}}{\norm{\btheta_{t-1}}}}  \\
& = \frac{1}{2} \frac{\ip{\bell_t}{\btheta_{t-1}}}{\norm{\btheta_{t-1}}} + \frac{1}{2} \frac{\norm{\bell_t}^2 - 2 \ip{\bell_t}{\btheta_{t-1}}}{ \norm{\btheta_{t-1}} + \norm{\btheta_{t-1} - \bell_t}} \\
& = \frac{1}{2} \ip{\bell_t}{\btheta_{t-1}} \frac{\norm{\btheta_{t-1} - \bell_t} - \norm{\btheta_t}}{\norm{\btheta_{t-1}}(\norm{\btheta_{t-1}} + \norm{\btheta_{t-1} - \bell_t})} + \frac{1}{2} \frac{\norm{\bell_t}^2}{\norm{\btheta_{t-1}} + \norm{\btheta_{t-1} - \bell_t}} \\
& \le \frac{1}{2} \abs{\ip{\bell_t}{\btheta_{t-1}}} \frac{\abs{\norm{\btheta_{t-1} - \bell_t} - \norm{\btheta_t}}}{\norm{\btheta_{t-1}}(\norm{\btheta_{t-1}} + \norm{\btheta_{t-1} - \bell_t})} + \frac{1}{2} \frac{\norm{\bell_t}^2}{\norm{\btheta_{t-1}} + \norm{\btheta_{t-1} - \bell_t}} \\
& \le \frac{1}{2} \norm{\bell_t} \norm{\btheta_{t-1}} \frac{\norm{\bell_t}}{\norm{\btheta_{t-1}}(\norm{\btheta_{t-1}} + \norm{\btheta_{t-1} - \bell_t})} + \frac{1}{2} \frac{\norm{\bell_t}^2}{\norm{\btheta_{t-1}} + \norm{\btheta_{t-1} - \bell_t}} \\
& = \frac{ \norm{\bell_t}^2}{\norm{\btheta_{t-1}} + \norm{\btheta_{t-1} - \bell_t}} \\
& \le \frac{\norm{\bell_t}^2}{S_{t-1}^2 + S_t^2} \\
& \le \frac{\norm{\bell_t}^2}{S_t^2} \: ,
\end{align*}
where we substituted for $\bbeta_t$ and $\bbeta_{t+1}$, used that, by
definition, $\btheta_t = \btheta_{t-1} - \bell_t$, made some algebraic
manipulation, used triangle inequality in the form $\abs{\norm{\ba} -
\norm{\bb}} \le \norm{\ba - \bb}$, Cauchy-Schwarz inequality and the assumptions
$\norm{\btheta_t} > S_t^2$, $\norm{\btheta_{t-1}} > S_{t-1}^2$ and, finally,
used that $S_{t-1}^2 > 0$.

\textbf{Case $\norm{\btheta_{t-1}} \le S_{t-1}^2$.} In this case, we have $\bbeta_t=\frac{\btheta_{t-1}}{2S_{t-1}^2}$.
Let
$$
\bbeta'_{t+1}= \frac{\btheta_t}{2 S_t^2} = \argmax_{\bbeta \in \R^d} \ip{\btheta_t}{\bbeta} - \norm{\bbeta}^2 S_t^2 \: .
$$
The left-hand side of \eqref{lemma:key-inequality-proof} is upper bounded as
$$
\ip{\btheta_t}{\bbeta_{t+1}} - \norm{\bbeta_{t+1}}^2 S_t^2 \le \ip{\btheta_t}{\bbeta'_{t+1}} - \norm{\bbeta_{t+1}'}^2 S_t^2 \: ,
$$
since $\bbeta_{t+1}$ and $\bbeta'_{t+1}$ are the constrained and unconstrained
maximizers of $\bbeta \mapsto \ip{\btheta_t}{\bbeta} - \norm{\bbeta}^2 S_t^2$
respectively. Thus it suffices to prove
$$
\ip{\btheta_t}{\bbeta'_{t+1}} - \norm{\bbeta'_{t+1}}^2 S_t^2 \le \ip{\btheta_t}{\bbeta_t} - \norm{\bbeta_t}^2 S_t^2 + \frac{\norm{\bell_t}^2}{S_t^2} \: .
$$
Substituting $\btheta_t = 2S_t^2 \bbeta'_{t+1}$, we get
$$
2S_t^2 \norm{\bbeta'_{t+1}}^2 - \norm{\bbeta'_{t+1}}^2 S_t^2 \le 2S_t^2 \ip{\bbeta'_{t+1}}{\bbeta_t} - \norm{\bbeta_t}^2 S_t^2 + \frac{\norm{\bell_t}^2}{S_t^2} \: ,
$$
which is equivalent to
$$
2S_t^2 \norm{\bbeta'_{t+1} - \bbeta_t}^2 \le \frac{\norm{\bell_t}^2}{S_t^2} \: .
$$
The last inequality follows from
\begin{align*}
\norm{\bbeta'_{t+1} - \bbeta_t}^2
& = \norm{\frac{\btheta_{t-1} - \bell_t}{2S_t^2} - \frac{\btheta_{t-1}}{2S_{t-1}^2}}^2 \\
& = \frac{1}{4} \norm{\frac{\btheta_{t-1} - \bell_t}{S_t^2} - \frac{\btheta_{t-1}}{S_{t-1}^2}}^2 \\
& = \frac{1}{4} \norm{\btheta_{t-1} \left(\frac{1}{S_t^2} - \frac{1}{S_{t-1}^2}\right) - \frac{\bell_t}{S_t^2}}^2 \\
& \le \frac{1}{4} \left( \norm{\btheta_{t-1} \left(\frac{1}{S_t^2} - \frac{1}{S_{t-1}^2}\right)} + \frac{\norm{\bell_t}}{S_t^2} \right)^2 \\
& = \frac{1}{4} \left( \norm{\btheta_{t-1}} \left(\frac{1}{S_{t-1}^2} - \frac{1}{S_t^2}\right) + \frac{\norm{\bell_t}}{S_t^2} \right)^2 \\
&\le \frac{1}{2} \norm{\btheta_{t-1}}^2 \left(\frac{1}{S_{t-1}^2} - \frac{1}{S_t^2}\right)^2 + \frac{1}{2} \frac{\norm{\bell_t}^2}{S_t^4} \\
& = \frac{1}{2} \norm{\btheta_{t-1}}^2 \frac{(S_t^2 - S_{t-1}^2)^2}{S_{t-1}^4 S_t^4}  + \frac{1}{2} \frac{\norm{\bell_t}^2}{S_t^4} \\
& = \frac{1}{2} \norm{\btheta_{t-1}}^2 \frac{\norm{\bell_t}^4}{S_{t-1}^4 S_t^4}  + \frac{1}{2} \frac{\norm{\bell_t}^2}{S_t^4} \\
& \le \frac{1}{2} S_{t-1}^4 \frac{\norm{\bell_t}^4}{S_{t-1}^4 S_t^4}  + \frac{1}{2} \frac{\norm{\bell_t}^2}{S_t^4} \\
& = \frac{\norm{\bell_t}^2}{S_t^4} \: ,
\end{align*}
where we substituted for $\bbeta_t$ and $\bbeta'_{t+1}$, used that $\btheta_t =
\btheta_{t-1} - \bell_t$, used triangle inequality, the inequality $0 < S_{t-1}
\le S_t$, the inequality $(a+b)^2 \le 2a^2 + 2b^2$, the fact $S_t = S_{t-1} +
\norm{\bell_t}^2$ and, finally, the inequality $\norm{\btheta_{t-1}} \le S_{t-1}^2$.

\textbf{Case $\norm{\btheta_{t-1}} > S_{t-1}^2$ and $\norm{\btheta_t} \le
S_t^2$.} In this case, we have
$\bbeta_t=\frac{\btheta_{t-1}}{2\norm{\btheta_{t-1}}}$ and $\bbeta_{t+1} =
\frac{\btheta_t}{2S_t^2}$. Since $\btheta_{t-1} = \btheta_t + \bell_t$ , we can
upper bound $\norm{\btheta_{t-1}}$ as
$$
\norm{\btheta_{t-1}} = \norm{\btheta_t + \bell_t} \le \norm{\btheta_t} + \norm{\bell_t} \le  S_t^2 + \norm{\bell_t} \: .
$$
Similarly, since $S_t^2 = S_{t-1}^2 + \norm{\bell_t}^2$ and $\norm{\bell_2} \le 1$, we can lower bound $\norm{\btheta_{t-1}}$ as
$$
\norm{\btheta_{t-1}} > S_{t-1}^2 = S_t^2 - \norm{\bell_t}^2 \ge S_t^2 - \norm{\bell_t} \: .
$$
Therefore,
\begin{equation}
\label{equation:key-inequality-proof-2}
\abs{\norm{\btheta_{t-1}} - S_t^2} \le \norm{\bell_t} \: .
\end{equation}
In order to prove \eqref{lemma:key-inequality-proof}, we substitute $\btheta_t =
2S_t^2 \bbeta_{t+1}$ in it and get
$$
2S_t^2 \norm{\bbeta_{t+1}}^2 - \norm{\bbeta_{t+1}}^2 S_t^2 \le 2S_t^2 \ip{\bbeta_{t+1}}{\bbeta_t} - \norm{\bbeta_t}^2 S_t^2 + \frac{\norm{\bell_t}^2}{S_t^2} \: ,
$$
which is equivalent to
$$
2S_t^2 \norm{\bbeta_{t+1} - \bbeta_t}^2 \le \frac{\norm{\bell_t}^2}{S_t^2} \: .
$$
The last inequality follows from
\begin{align*}
\norm{\bbeta_{t+1} - \bbeta_t}^2
& = \norm{\frac{\btheta_{t-1} - \bell_t}{2S_t^2} - \frac{\btheta_{t-1}}{2\norm{\btheta_{t-1}}}}^2 \\
& = \frac{1}{4} \norm{\frac{\btheta_{t-1} - \bell_t}{S_t^2} - \frac{\btheta_{t-1}}{\norm{\btheta_{t-1}}}}^2 \\
& = \frac{1}{4} \norm{ \btheta_{t-1} \left( \frac{1}{S_t^2} - \frac{1}{\norm{\btheta_{t-1}}} \right) - \frac{\bell_t}{S_t^2}}^2  \\
& \le \frac{1}{4} \left( \norm{ \btheta_{t-1} \left( \frac{1}{S_t^2} - \frac{1}{\norm{\btheta_{t-1}}} \right)} + \frac{\norm{\bell_t}}{S_t^2} \right)^2  \\
& \le \frac{1}{2} \norm{\btheta_{t-1}}^2 \left(\frac{1}{S_t^2} - \frac{1}{\norm{\btheta_{t-1}}}\right)^2 + \frac{1}{2}\frac{\norm{\bell_t}^2}{S_t^4} \\
& = \frac{1}{2} \norm{\btheta_{t-1}}^2 \frac{(\norm{\btheta_{t-1}} - S_t^2)^2}{S_t^4 \norm{\btheta_{t-1}}^2} + \frac{1}{2}\frac{\norm{\bell_t}^2}{S_t^4} \\
& \le \frac{1}{2} \norm{\btheta_{t-1}}^2 \frac{\norm{\bell_t}^2}{S_t^4 \norm{\btheta_{t-1}}^2} + \frac{1}{2}\frac{\norm{\bell_t}^2}{S_t^4} \\
& = \frac{\norm{\bell_t}^2}{S_t^4} \: ,
\end{align*}
where we substituted for $\bbeta_t$ and $\bbeta_{t+1}$, used that $\btheta_t =
\btheta_{t-1} - \bell_t$, used triangle inequality, the inequality $(a+b)^2 \le
2a^2 + 2b^2$, and, finally, the inequality
\eqref{equation:key-inequality-proof-2}.
\end{proof}

\pagebreak
\section{Proofs for Section~\ref{section:proof-variationally-coherent-convergence}}
\label{section:variationally-coherent}

In this section, we prove Lemma~\ref{lemma:iterates-squeezed} and Lemma~\ref{lemma:bregman-convergence}. For its proof, we first need the following technical lemma.

\begin{lemma}[Convergence of non-negative supermartingales]
\label{lemma:supermartingale-convergence}
Let $Y_1, Y_2, \dots$ be a non-negative supermartingale with respect to a
filtration $\mathcal{F}_1, \mathcal{F}_2, \dots$, that is, $Y_t \ge 0$, $\Exp[Y_t] <
\infty$ and $\Exp[Y_{t+1} ~|~ \mathcal{F}_t] \le Y_t$. Then, there exists
a random variable $Y$ such that
$$
\lim_{t \to \infty} Y_t  = Y < \infty \qquad \text{almost surely.}
$$
\end{lemma}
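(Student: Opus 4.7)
I would invoke Doob's martingale convergence theorem for nonnegative supermartingales, which is exactly the content of this lemma. Since the paper presents the result without citation, I assume the intent is to sketch a self-contained proof based on Doob's upcrossing inequality.

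The plan is as follows. First, observe that the supermartingale property together with non-negativity gives $\Exp[Y_t] \le \Exp[Y_1] < \infty$ for all $t$, so the sequence of expectations is bounded. Next, for rationals $a < b$ and each $n$, let $U_n^{[a,b]}$ denote the number of upcrossings of the interval $[a,b]$ completed by $Y_1,\dots,Y_n$, defined via the usual stopping times that alternate between hitting below $a$ and hitting above $b$. The core step is Doob's upcrossing inequality,
\[
(b-a)\,\Exp[U_n^{[a,b]}] \le \Exp[(Y_n - a)^-] \le a + \Exp[Y_n] \le a + \Exp[Y_1],
\]
which I would derive by the standard telescoping argument: define the predictable process $C_k \in \{0,1\}$ that equals $1$ between an $a$-downcrossing and the next $b$-upcrossing, verify that the transform $(C \cdot Y)_n = \sum_{k=1}^{n} C_k (Y_k - Y_{k-1})$ is again a supermartingale with zero initial value, and observe that each completed upcrossing contributes at least $(b-a)$ to $(C \cdot Y)_n$ while the ongoing partial upcrossing contributes at least $-(Y_n - a)^-$. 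Taking expectations and rearranging yields the inequality.

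By monotone convergence $\Exp[U_\infty^{[a,b]}] \le (a+\Exp[Y_1])/(b-a) < \infty$, so $U_\infty^{[a,b]}$ is almost surely finite. Taking a union over the countable collection of rational pairs $a < b$ gives that, almost surely, no interval $[a,b]$ is upcrossed infinitely often. Since the event $\{\liminf_t Y_t < \limsup_t Y_t\}$ is contained in $\bigcup_{a<b \text{ rational}} \{U_\infty^{[a,b]} = \infty\}$, we conclude that $\lim_{t\to\infty} Y_t$ exists in $[0,\infty]$ almost surely. Denote this limit by $Y$.

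Finally, I would apply Fatou's lemma:
\[
\Exp[Y] = \Exp[\liminf_{t\to\infty} Y_t] \le \liminf_{t\to\infty} \Exp[Y_t] \le \Exp[Y_1] < \infty,
\]
so $Y < \infty$ almost surely, completing the proof. The main obstacle is establishing the upcrossing inequality cleanly; everything else is a routine assembly on top of it. Since this is entirely standard, the proof in the paper may well just cite Doob or a textbook reference rather than reproduce the argument.
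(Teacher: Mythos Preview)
Your sketch via Doob's upcrossing inequality is correct and is the standard route to this result. As you anticipated in your final sentence, the paper does not reproduce any argument at all: it simply cites \citet[Theorem~10.8.5, page~373]{Resnick99} for the proof, so your self-contained treatment goes well beyond what the paper provides.
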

The proof of Lemma~\ref{lemma:supermartingale-convergence} can be found e.g. in
\citet[Theorem~10.8.5, page~373]{Resnick99}.

We can now prove Lemma~\ref{lemma:bregman-convergence}.
\begin{proof}[Proof of Lemma~\ref{lemma:bregman-convergence}]
Lemma~\ref{lemma:key-inequality} implies that
\begin{equation}
\label{equation:bregman-convergence-proof-1}
- H_{t+1}(\bx_{t+1}) \le - H_t(\bx_t) - \ip{\bell_t}{\bx_t} \le 0 \: .
\end{equation}
By definition of $H_t(\cdot)$ and $H_{t+1}(\cdot)$,
\begin{equation}
\label{equation:bregman-convergence-proof-2}
H_{t+1}(\bx^*) - \phi_{t+1}(\bx^*) = H_{t}(\bx^*) - \phi_{t}(\bx^*) + \ip{\bell_t}{\bx^*}  \: .
\end{equation}
We sum \eqref{equation:bregman-convergence-proof-1} and \eqref{equation:bregman-convergence-proof-2} and get
\[
H_{t+1}(\bx^*) - H_{t+1}(\bx_{t+1}) - \phi_{t+1}(\bx^*)
\le H_{t}(\bx^*) - H_t(\bx_t) - \phi_{t}(\bx^*) - \ip{\bell_t}{\bx_t - \bx^*} \: .
\]
Since $\bx_{T+1}$ is the minimizer of $H_{T+1}$, $H_{t+1}(\bx^*) -
H_{t+1}(\bx_{t+1}) = B_{H_{t+1}}(\bx^*, \bx_{t+1}) = B_{\phi_{t+1}}(\bx^*,
\bx_{t+1})$. Similarly, $H_t(\bx^*) - H_t(\bx_t) = B_{\phi_t}(\bx^*, \bx_t)$.
Therefore,
\[
B_{\phi_{t+1}}(\bx^*, \bx_{t+1}) - \phi_{t+1}(\bx^*)
\le B_{\phi_{t}}(\bx^*, \bx_t) - \phi_{t}(\bx^*) - \ip{\bell_t}{\bx_t - \bx^*} \: .
\]
We add $\phi_{\infty}(\bx^*)$ to both sides and we get
\[
B_{\phi_{t+1}}(\bx^*, \bx_{t+1}) + \phi_\infty(\bx^*) - \phi_{t+1}(\bx^*)
\le B_{\phi_t}(\bx^*, \bx_t) + \phi_\infty(\bx^*) - \phi_t(\bx^*) - \ip{\bell_t}{\bx_t - \bx^*} \: .
\]
Let $Y_t = B_{\phi_t}(\bx^*, \bx_t) + \phi_\infty(\bx^*) - \phi_t(\bx^*)$. We have
\[
Y_{t+1} \le Y_t - \ip{\bell_t}{\bx_t - \bx^*} \: .
\]
Equivalently,
\[
Y_{t+1} \le Y_t - \eta_t \ip{\bg_t}{\bx_t - \bx^*} \: .
\]
Taking conditional expectation of both sides,
\[
\Exp[Y_{t+1} ~|~ \mathcal{F}_t] \le Y_t - \eta_t \ip{\grad F(\bx_t)}{\bx_t - \bx^*} \: .
\]
Since $F$ is variationally coherent,
\[
\Exp[Y_{t+1} ~|~ \mathcal{F}_t] \le Y_t \: .
\]
Note that $Y_t$ is non-negative, since $B_{\phi_t}(\cdot, \cdot)$ is
non-negative and $\phi_t(\bx^*) \le \phi_\infty(\bx^*)$. Thus $Y_1, Y_2, \dots$
is a non-negative supermartingale. By
Lemma~\ref{lemma:supermartingale-convergence}, $Y_t$ converges almost surely to
a finite limit. Since $\phi_\infty(\bx^*) - \phi_t(\bx^*) \ge 0$ and
$\{\phi_\infty(\bx^*) - \phi_t(\bx^*)\}_{t=1}^\infty$ is non-increasing,
$\lim_{t \to \infty} (\phi_\infty(\bx^*) - \phi_t(\bx^*))$ exists almost surely.
Therefore, $B_{\phi_t}(\bx^*, \bx_t) = Y_t - \phi_\infty(\bx^*) + \phi_t(\bx^*)$
has a limit almost surely.
\end{proof}

To prove Lemma~\ref{lemma:iterates-squeezed}, we need the following lemmas.
\begin{lemma}[Second derivative bounds]
\label{lemma:second-derivate-bounds}
Let $I\subseteq \R$ be an open interval. Let $f:I \to \R$ be a function such
that $f''(x)$ exists almost everywhere and $f''(x)$ is continuous almost
everywhere. Let $g,h:I \to \R$ be twice continuously differentiable. Suppose,
for almost all $x \in I$,
\begin{equation}
\label{equation:second-derivate-bounds}
g''(x) \le f''(x) \le h''(x) \: .
\end{equation}
Then, for any $u,v \in I$, there exists $z_1, z_2$ between $u$ and $v$ such that
$$
\frac{1}{2} g''(z_1)(u-v)^2 \le f(u) - f(v) - (u-v)f'(v) \le \frac{1}{2} h''(z_2)(u-v)^2 \: .
$$
\end{lemma}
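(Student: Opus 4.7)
The plan is to reduce the claim to the classical Taylor formula with Lagrange remainder, which we can apply to $g$ and $h$ directly because they are twice continuously differentiable. The bridge from $f$ to $g,h$ will be a convexity argument based on the hypothesis $g'' \le f'' \le h''$ almost everywhere.

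First I would introduce the auxiliary functions $\varphi(x) = f(x) - g(x)$ and $\chi(x) = h(x) - f(x)$ on $I$. By assumption, wherever $f''$ exists we have $\varphi''(x) = f''(x) - g''(x) \ge 0$ and $\chi''(x) = h''(x) - f''(x) \ge 0$; both hold almost everywhere. Since $f''$ lies between the two continuous functions $g''$ and $h''$ almost everywhere, $f''$ is locally bounded, so $f'$ is locally absolutely continuous and satisfies the fundamental theorem of calculus $f'(y) - f'(x) = \int_x^y f''(t)\,dt$. Consequently $\varphi'$ and $\chi'$ are non-decreasing on $I$, and hence $\varphi$ and $\chi$ are convex.

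Next I would invoke the standard one-variable subgradient inequality for convex functions: for all $u,v \in I$,
\begin{align*}
\varphi(u) - \varphi(v) - \varphi'(v)(u-v) & \ge 0, \\
\chi(u)   - \chi(v)   - \chi'(v)(u-v)   & \ge 0.
\end{align*}
Substituting the definitions of $\varphi$ and $\chi$ and rearranging yields
\begin{align*}
g(u) - g(v) - g'(v)(u-v) \,\le\, f(u) - f(v) - f'(v)(u-v) \,\le\, h(u) - h(v) - h'(v)(u-v).
\end{align*}
Because $g$ and $h$ are twice continuously differentiable, Taylor's theorem with Lagrange remainder gives the existence of points $z_1, z_2$ between $u$ and $v$ with
\begin{align*}
g(u) - g(v) - g'(v)(u-v) & = \tfrac{1}{2} g''(z_1)(u-v)^2, \\
h(u) - h(v) - h'(v)(u-v) & = \tfrac{1}{2} h''(z_2)(u-v)^2,
\end{align*}
and combining the last two displays produces the claimed two-sided bound.

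The only nontrivial point I expect is the convexity step: going from an almost-everywhere inequality on second derivatives to a genuine inequality on first derivatives (and hence convexity) requires $f'$ to be absolutely continuous. This is why I spend a sentence deducing local boundedness of $f''$ from the sandwich by two continuous functions and invoking the fundamental theorem of calculus; everything else is routine.
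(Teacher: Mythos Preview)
Your proposal is correct and follows essentially the same route as the paper. The paper integrates the inequality $g'' \le f'' \le h''$ twice (first from $v$ to $s$, then from $v$ to $u$) to obtain the sandwich $g(u)-g(v)-g'(v)(u-v) \le f(u)-f(v)-f'(v)(u-v) \le h(u)-h(v)-h'(v)(u-v)$ and then applies Taylor's theorem with Lagrange remainder to $g$ and $h$; your convexity argument for $\varphi=f-g$ and $\chi=h-f$ is an equivalent repackaging of that double integration, and the final Taylor step is identical.
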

\begin{proof}
The functions $g''(\cdot)$, $h''(\cdot)$ are continuous and therefore bounded on
any closed interval $J \subseteq I$. Similarly, $f''(\cdot)$ is continuous
almost everywhere and by assumption \eqref{equation:second-derivate-bounds}
bounded on any closed interval. Therefore, $g''(\cdot)$, $h''(\cdot)$,
$f''(\cdot)$ are Riemann integrable on any closed interval $J \subseteq I$.
Integrating \eqref{equation:second-derivate-bounds}, we get
$$
\int_v^s g''(x) dx \le \int_v^s f''(x) dx \le \int_v^s h''(x) dx \qquad \text{for any $s \in I$.}
$$
Fundamental theorem of calculus implies that
$$
g'(s) - g'(v) \le f'(s) - f'(v) \le h'(s) - h'(v) \qquad \text{for any $s \in I$.}
$$
Integrating one more time, we get
$$
\int_v^u g'(s) - g'(v) ds \le \int_v^u f'(s) - f'(v) ds \le \int_v^u h'(s) - h'(v) ds
$$
All integrals exists as Riemann integrals, since $f'(\cdot)$, $g'(\cdot)$, $h'(\cdot)$
are necessarily continuous. Fundamental theorem of calculus implies that
$$
g(u) - g(v) - g'(v)(u-v) \le f(u) - f(v) - (u-v)f'(v) \le h(u) - h(v) - h'(v)(u-v) \: .
$$
The lemma follows by applying Taylor's theorem to $g$ and $h$.
\end{proof}

\begin{lemma}[Hessian of radially symmetric functions]
\label{lemma:radially-symmetric-hessian}
Let $f:\R \to \R$ and $g:\R^d \to \R$ be defined as $g(\bx) = f(\norm{\bx})$. If
$f$ is twice differentiable at $\norm{\bx}$ and $\norm{\bx} > 0$ then
\[
\min\left\{f''(\norm{\bx}), \frac{f'(\norm{\bx})}{\norm{\bx}}\right\} I
\preceq \grad^2 g(\bx)
\preceq \max\left\{f''(\norm{\bx}), \frac{f'(\norm{\bx})}{\norm{\bx}}\right\} I \: .
\]
\end{lemma}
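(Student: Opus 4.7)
The plan is to compute the Hessian of $g$ explicitly and then read off its eigenvalues.

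First I would abbreviate $r = \norm{\bx}$ and compute the gradient using the chain rule: since $\grad_{\bx} \norm{\bx} = \bx/r$ for $r > 0$, we get $\grad g(\bx) = f'(r) \, \bx/r$. Then I would differentiate once more, using the product rule and the identity $\grad_{\bx}(\bx/r) = \tfrac{1}{r} I - \tfrac{\bx \bx^\top}{r^3}$, to obtain
\[
\grad^2 g(\bx) = f''(r) \frac{\bx \bx^\top}{r^2} + \frac{f'(r)}{r}\left(I - \frac{\bx \bx^\top}{r^2}\right).
\]

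Next I would recognize this as a spectral decomposition. Writing $P = \bx \bx^\top/r^2$ and $P^\perp = I - P$ for the orthogonal projections onto $\mathrm{span}(\bx)$ and its orthogonal complement, we have $\grad^2 g(\bx) = f''(r) \, P + \tfrac{f'(r)}{r} \, P^\perp$. The eigenvalues of $\grad^2 g(\bx)$ are therefore exactly $f''(r)$ (with eigenvector $\bx$) and $f'(r)/r$ (with multiplicity $d-1$, on the orthogonal complement).

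The stated operator inequalities then follow immediately: the minimum eigenvalue is $\min\{f''(r), f'(r)/r\}$ and the maximum eigenvalue is $\max\{f''(r), f'(r)/r\}$, so
\[
\min\{f''(r), f'(r)/r\} \, I \preceq \grad^2 g(\bx) \preceq \max\{f''(r), f'(r)/r\} \, I,
\]
which is the claim. There is no real obstacle here; the only care needed is justifying that the chain rule applies at $\bx$, which is fine because $r > 0$ makes $\bx \mapsto \norm{\bx}$ smooth at $\bx$ and $f$ is assumed twice differentiable at $r$.
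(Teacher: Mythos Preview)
Your proof is correct and follows essentially the same approach as the paper: both compute the Hessian explicitly as $f''(r)\,\bx\bx^\top/r^2 + (f'(r)/r)(I - \bx\bx^\top/r^2)$ and then identify its eigenstructure. The paper does this by decomposing an arbitrary $\bu = \alpha\bx + \bv$ with $\bv \perp \bx$ and computing the quadratic form directly, whereas you phrase it via the orthogonal projections $P$ and $P^\perp$; the arguments are equivalent.
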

\begin{proof}
We need to prove that
\[
\forall \bu \in \R^d \qquad \norm{\bu}^2 \min\left(f''(\norm{\bx}), \frac{f'(\norm{\bx})}{\norm{\bx}}\right) \le \bu^\top \grad^2 g(\bx) \bu \le \norm{\bu}^2 \max\left(f''(\norm{\bx}), \frac{f'(\norm{\bx})}{\norm{\bx}}\right) \: .
\]
The gradient of $g(\bx)$ is $\grad g(\bx) = \frac{\bx}{\norm{\bx}} f'(\norm{\bx})$.
The Hessian of $g(\bx)$ is
\[
\grad^2 g(\bx) = f''(\norm{\bx}) \frac{\bx \bx^\top}{\norm{\bx}^2} + f'(\norm{\bx}) \left(\frac{I}{\norm{\bx}} - \frac{\bx \bx^\top}{\norm{\bx}^3}\right) \: .
\]
In order to upper and lower bound $\bu^\top \grad^2 g(\bx) \bu$,
we decompose the vector $\bu \in \R^d$ as $\bu = \alpha \bx + \bv$ where
$\bv$ is orthogonal to $\bx$. For convenience, let $\beta =
\frac{f''(\norm{\bx})}{\norm{\bx}^2} - \frac{f'(\norm{\bx})}{\norm{\bx}^3}$ and
$\gamma = \frac{f'(\norm{\bx})}{\norm{\bx}}$. We can express $\bu^\top \grad^2
g(\bx) \bu$ as
\begin{align}
\bu^\top \grad^2 g(\bx) \bu
& = \bu^\top \grad^2 (\beta \bx \bx^\top + \gamma I) \bu \notag \\
& = (\alpha \bx + \bv)^\top \grad^2 (\beta \bx \bx^\top + \gamma I) (\alpha \bx + \bv) \notag \\
& = \alpha^2 \beta \norm{\bx}^4 + \alpha^2 \gamma \norm{\bx}^2 + \gamma \norm{\bv}^2 \notag \\
& = \alpha^2 \left(\frac{f''(\norm{\bx})}{\norm{\bx}^2} - \frac{f'(\norm{\bx})}{\norm{\bx}^3}\right) \norm{\bx}^4 + \frac{f'(\norm{\bx})}{\norm{\bx}} (\alpha^2 \norm{\bx}^2 + \norm{\bv}^2) \notag \\
& = \alpha^2 f''(\norm{\bx}) \norm{\bx}^2 + \frac{f'(\norm{\bx})}{\norm{\bx}} \norm{\bv}^2 \label{lemma:radially-symmetric-hessian-proof-1} \: .
\end{align}
Let $A = \min\left(f''(\norm{\bx}), \frac{f'(\norm{\bx})}{\norm{\bx}}\right)$
and $B = \max\left(f''(\norm{\bx}), \frac{f'(\norm{\bx})}{\norm{\bx}}\right)$.
We can upper and lower bound
\[
A(\alpha^2 \norm{\bx}^2 + \norm{\bv}^2) \le \alpha^2 f''(\norm{\bx}) \norm{\bx}^2 + \frac{f'(\norm{\bx})}{\norm{\bx}} \norm{\bv}^2 \le B(\alpha^2 \norm{\bx}^2 + \norm{\bv}^2) \: .
\]
Since $\bx$ and $\bv$ are orthogonal, $\alpha^2 \norm{\bx}^2 + \norm{\bv}^2 = \norm{\bu}^2$. Thus,
\[
A \norm{\bu}^2  \le \bu^\top \grad^2 g(\bx) \bu \le B \norm{\bu}^2 \: .
\]
\end{proof}

\begin{lemma}[Bounds on Bregman divergence]
\label{lemma:bregman-bounds}
For any $t \ge 1$, there exists $\widetilde{x}_t$ between $\norm{\bx^*}$ and $\norm{\bx_t}$ such that
\[
\frac{\norm{\bx^* - \bx_t}^2}{S_{t-1} + 2} \min\left\{ 1, \frac{1}{\widetilde{x}_t} \right\}
\le B_{\phi_t}(\bx^*, \bx_t)
\le 2 \norm{\bx^* - \bx_t}^2 (S_{t-1}^2 + Q_{t-1})\exp(Q_{t-1})  \: .
\]
\end{lemma}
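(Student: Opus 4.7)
The plan is to reduce this $d$-dimensional Bregman divergence to a one-dimensional calculation involving $f(x) \defeq \psi(x, S_{t-1}, Q_{t-1})$. Writing $\phi_t(\bx) = f(\norm{\bx})$ and $\grad \phi_t(\bx_t) = f'(\norm{\bx_t})\, \bx_t/\norm{\bx_t}$, and setting $\beta = \ip{\bx_t}{\bx^*}/\norm{\bx_t}$, I would first add and subtract $f'(\norm{\bx_t})(\norm{\bx^*} - \norm{\bx_t})$ to decompose
\begin{equation*}
B_{\phi_t}(\bx^*, \bx_t)
= B_f(\norm{\bx^*}, \norm{\bx_t}) + f'(\norm{\bx_t})(\norm{\bx^*} - \beta) \: .
\end{equation*}
Both summands are non-negative: the first by convexity of $f$, the second because $f'(\cdot)\ge 0$ on $[0,\infty)$ and $\beta \le \norm{\bx^*}$ by Cauchy--Schwarz.

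Next, using Lemma~\ref{lemma:second-derivate-bounds} (which accommodates the two isolated points where $f''$ fails to be continuous), I would write $B_f(\norm{\bx^*}, \norm{\bx_t}) = \tfrac{1}{2} f''(\xi) (\norm{\bx^*}-\norm{\bx_t})^2$ for some $\xi$ between $\norm{\bx^*}$ and $\norm{\bx_t}$, and rewrite the cross term as $\tfrac{1}{2}\cdot\tfrac{f'(\norm{\bx_t})}{\norm{\bx_t}} \cdot 2\norm{\bx_t}(\norm{\bx^*} - \beta)$. Combined with the Pythagorean-type identity
\begin{equation*}
\norm{\bx^* - \bx_t}^2 = (\norm{\bx^*} - \norm{\bx_t})^2 + 2\norm{\bx_t}(\norm{\bx^*} - \beta),
\end{equation*}
this represents $B_{\phi_t}(\bx^*, \bx_t)$ as a weighted sum with non-negative weights whose total is $\tfrac{1}{2}\norm{\bx^*-\bx_t}^2$, yielding the sandwich
\begin{equation*}
\tfrac{1}{2}\min\!\left\{f''(\xi),\, f'(\norm{\bx_t})/\norm{\bx_t}\right\} \norm{\bx^*-\bx_t}^2
\le B_{\phi_t}(\bx^*, \bx_t)
\le \tfrac{1}{2}\max\!\left\{f''(\xi),\, f'(\norm{\bx_t})/\norm{\bx_t}\right\} \norm{\bx^*-\bx_t}^2 \: .
\end{equation*}

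It then remains to plug in the scalar estimates from Lemma~\ref{lemma:psi-properties}. For the upper bound, the upper-bound parts of items~6 and~7 combined with $S_{t-1}\ge 2$ (so $\exp(Q-S^2/4) \le e^{-1}\exp(Q)$ and $4 \le 2S^2$) show that both $f''(\xi)$ and $f'(\norm{\bx_t})/\norm{\bx_t}$ are at most $4(S_{t-1}^2 + Q_{t-1})\exp(Q_{t-1})$, producing the stated factor $2$ after halving. For the lower bound I would take $\widetilde{x}_t = \max(\xi, \norm{\bx_t})$, which automatically lies in the closed interval between $\norm{\bx^*}$ and $\norm{\bx_t}$, and invoke the lower-bound parts of items~6 and~7 together with the elementary facts that $\sqrt{\ln(1+2x^2)}/x$ is decreasing on $(0,\infty)$ and $\sqrt{\ln(1+2x^2)}\ge\sqrt{\ln 3}>1$ for $|x|\ge 1$.

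The main obstacle is the case analysis in this last step: I need to verify separately, in each of the regimes $\widetilde{x}_t \le 1$ and $\widetilde{x}_t > 1$, that both $f''(\xi)$ (with $\xi$ possibly on the opposite side of $1$ from $\widetilde{x}_t$) and $f'(\norm{\bx_t})/\norm{\bx_t}$ exceed $\tfrac{2}{S_{t-1}+2}\min\{1,1/\widetilde{x}_t\}$; once this is done, the claimed $\tfrac{1}{S_{t-1}+2}\min\{1,1/\widetilde{x}_t\}$ follows from the sandwich above.
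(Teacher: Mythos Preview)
Your approach is correct and genuinely different from the paper's. The paper parametrizes the segment $\bz(\alpha)=\alpha\bx^*+(1-\alpha)\bx_t$, computes the Hessian of the radial function $\phi_t(\bx)=\psi_t(\norm{\bx})$ via Lemma~\ref{lemma:radially-symmetric-hessian} (giving eigenvalue bounds $\min/\max\{\psi_t''(\norm{\bz}),\psi_t'(\norm{\bz})/\norm{\bz}\}$), and then applies Lemma~\ref{lemma:second-derivate-bounds} to the one-variable function $\alpha\mapsto\phi_t(\bz(\alpha))$; the intermediate point $\widetilde{x}_t$ is $\norm{\bz(\alpha^*)}$. Your decomposition $B_{\phi_t}(\bx^*,\bx_t)=B_f(\norm{\bx^*},\norm{\bx_t})+f'(\norm{\bx_t})(\norm{\bx^*}-\beta)$ together with the identity $\norm{\bx^*-\bx_t}^2=(\norm{\bx^*}-\norm{\bx_t})^2+2\norm{\bx_t}(\norm{\bx^*}-\beta)$ achieves the same sandwich without touching the multivariate Hessian, which is pleasantly elementary. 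The price is that your two curvature quantities sit at two different points ($\xi$ and $\norm{\bx_t}$) rather than a single $\norm{\bz(\alpha^*)}$, so your final case analysis is a bit more delicate; the paper's route trades that for the extra Hessian lemma.

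One small but genuine gap: you invoke Lemma~\ref{lemma:second-derivate-bounds} to obtain the \emph{equality} $B_f(\norm{\bx^*},\norm{\bx_t})=\tfrac{1}{2}f''(\xi)(\norm{\bx^*}-\norm{\bx_t})^2$. That lemma does not give this; it yields bounds in terms of auxiliary $C^2$ functions $g,h$, and here $f=\psi(\cdot,S_{t-1},Q_{t-1})$ is not $C^2$ (its second derivative jumps at $\pm\tfrac{1}{2}\exp(\tfrac{1}{4}S_{t-1}^2-Q_{t-1})$), so Lagrange's exact remainder is unavailable. The fix is immediate and does not disturb the rest of your argument: feed the pointwise lower and upper bounds on $f''$ from Lemma~\ref{lemma:psi-properties}, item~\ref{lemma:psi-property-7}, directly into Lemma~\ref{lemma:second-derivate-bounds} as $g''$ and $h''$, obtaining $B_f\ge \tfrac{1}{2}\cdot\frac{\min\{2,\sqrt{\ln(1+2\xi_1^2)}\}}{\xi_1(\tfrac{1}{2}S_{t-1}+1)}(\norm{\bx^*}-\norm{\bx_t})^2$ for some $\xi_1$ between $\norm{\bx^*}$ and $\norm{\bx_t}$ (the upper bound is uniform in the point, so $\xi_2$ is irrelevant). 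Then set $\widetilde{x}_t=\max(\xi_1,\norm{\bx_t})$ and your case analysis goes through verbatim. You should also note the degenerate case $\bx_t=\boldsymbol{0}$ separately, since $\beta$ and $f'(\norm{\bx_t})/\norm{\bx_t}$ are undefined there; it reduces to the one-dimensional Taylor estimate alone.
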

\begin{proof}
Let us define $f:\R \to \R$, $\psi_t:\R \to \R$ and $\bz_t:\R \to \R^d$ as
\begin{align*}
f_t(\alpha) & = \phi_t(\alpha \bx^* + (1-\alpha)\bx_t) \: , \\
\psi_t(x) & = \psi(x, S_{t-1}, Q_{t-1}) \: , \\
\bz_t(\alpha) & = \alpha \bx^* + (1-\alpha) \bx_t \: .
\end{align*}
By definition of Bregman divergence,
\[
B_{\phi_t}(\bx^*, \bx_t) = f_t(1) - f_t(0) - f_t'(0) \: .
\]
We we will use Lemma~\ref{lemma:second-derivate-bounds} to lower and upper bound
the right-hand side. In order to apply the lemma we need upper and lower bounds
on $f_t''(\alpha)$. Since $\phi_t(\bx) = \psi_t(\norm{\bx})$,
\[
f_t''(\alpha) = (\bx^* - \bx_t)^\top \grad^2 \phi_t(\alpha \bx^* + (1-\alpha)\bx_t) (\bx^* - \bx_t) = (\bx^* - \bx_t)^\top \grad^2 \phi_t(\bz_t(\alpha)) (\bx^* - \bx_t)
\]
Therefore, by Lemma~\ref{lemma:radially-symmetric-hessian},
\[
\norm{\bx^* - \bx_t}^2 \min\left\{ \psi_t''(\norm{\bz_t(\alpha)}), \frac{\psi_t'(\norm{\bz_t(\alpha)})}{\norm{\bz_t(\alpha)}}\right\}
\le f(\alpha)
\le \norm{\bx^* - \bx_t}^2 \max\left\{ \psi_t''(\norm{\bz_t(\alpha)}), \frac{\psi_t'(\norm{\bz_t(\alpha)})}{\norm{\bz_t(\alpha)}}\right\} \: .
\]
It remains to lower bound $\min\left\{ \psi_t''(\norm{\bz_t(\alpha)}), \frac{\psi_t'(\norm{\bz_t(\alpha)})}{\norm{\bz_t(\alpha)}}\right\}$
and upper bound $\max\left\{ \psi_t''(\norm{\bz_t(\alpha)}), \frac{\psi_t'(\norm{\bz_t(\alpha)})}{\norm{\bz_t(\alpha)}}\right\}$.
Note that $\psi_t'(x) = \frac{\partial \psi(x,S,Q)}{\partial x}$
and $\psi_t''(x) = \frac{\partial^2 \psi(x,S,Q)}{\partial x^2}$.
We use Lemma~\ref{lemma:psi-properties} to bound these quantities.
A lower bound follows from
\begin{align*}
& \min\left\{ \psi_t''(\norm{\bz_t(\alpha)}), \frac{\psi_t'(\norm{\bz_t(\alpha)})}{\norm{\bz_t(\alpha)}}\right\} \\
& \ge \min\left\{ \frac{2}{\norm{\bz_t(\alpha)}(\frac{1}{2}S_{t-1} + 1)}, \frac{\sqrt{\ln(1 + 2 \norm{\bz_t(\alpha)}^2)}}{\norm{\bz_t(\alpha)}(\frac{1}{2}S_{t-1} + 1)}, \frac{\sqrt{\ln(1 + 2 \norm{\bz_t(\alpha)}^2)}}{\norm{\bz_t(\alpha)}} \right\} \\
& = \frac{\min\left\{2, \sqrt{\ln(1 + 2 \norm{\bz_t(\alpha)}^2)} \right\}}{\norm{\bz_t(\alpha)}(\frac{1}{2}S_{t-1} + 1)} & \text{(since $S_{t-1} \ge 1$)} \\
& \ge \frac{1}{\frac{1}{2}S_{t-1} + 1}  \min\left\{ 1, \frac{1}{\norm{\bz_t(\alpha)}} \right\} & \text{(by case analysis)} \: .
\end{align*}
An upper bound follows from
\begin{align*}
& \max\left\{ \psi_t''(\norm{\bz_t(\alpha)}), \frac{\psi_t'(\norm{\bz_t(\alpha)})}{\norm{\bz_t(\alpha)}}\right\} \\
& \le \max\left\{ 2S_{t-1}^2 \exp(Q_{t-1}),  4\exp\left(Q_{t-1} -\frac{1}{4} S_{t-1}^2 \right), 4 + \frac{S_{t-1}^2 + 4 Q_{t-1} - 4}{\exp\left(\frac{1}{4} S_{t-1}^2 - Q_{t-1}\right)} \right\} \\
& \le \max\left\{ 4S_{t-1}^2 \exp(Q_{t-1}), 4 + \frac{S_{t-1}^2 + 4 Q_{t-1} - 4}{\exp\left(\frac{1}{4} S_{t-1}^2 - Q_{t-1}\right)} \right\} \\
& \le \max\left\{ 4S_{t-1}^2 \exp(Q_{t-1}), 4 + (S_{t-1}^2 + 4 Q_{t-1} - 4)\exp(Q_{t-1}) \right\} \\
& \le \max\left\{ 4S_{t-1}^2 \exp(Q_{t-1}), (S_{t-1}^2 + 4 Q_{t-1})\exp(Q_{t-1}) \right\} \\
& \le 4(S_{t-1}^2 + Q_{t-1})\exp(Q_{t-1}) \: .
\end{align*}

The second derivative $f_t''(\alpha)$ exists almost everywhere and $f''_t(\alpha)$ is continuous almost everywhere.
Lemma~\ref{lemma:second-derivate-bounds} implies that there exists $\alpha^* \in [0,1]$ such that
\begin{align*}
\frac{1}{2} \frac{\norm{\bx^* - \bx_t}^2}{\frac{1}{2}S_{t-1} + 1} \min\left\{ 1, \frac{1}{\norm{\bz_t(\alpha^*)}} \right\} \le f_t(1) - f_t(0) - f_t'(0) \le \frac{1}{2} \norm{\bx^* - \bx_t}^2 4(S_{t-1}^2 + Q_{t-1})\exp(Q_{t-1})  \: .
\end{align*}
Let $\widetilde{x}_t = \norm{\bz_t(\alpha^*)}$. The number $\widetilde{x}_t$ lies
between $\norm{\bx^*}$ and $\norm{\bx_t}$, since $\bz(\alpha^*)$ is a convex
combination of $\alpha \bx^* + (1-\alpha) \bx_t$. Thus,
\begin{align*}
\frac{\norm{\bx^* - \bx_t}^2}{S_{t-1} + 2} \min\left\{ 1, \frac{1}{\widetilde{x}_t} \right\} \le f_t(1) - f_t(0) - f_t'(0) \le 2 \norm{\bx^* - \bx_t}^2 (S_{t-1}^2 + Q_{t-1})\exp(Q_{t-1})  \: .
\end{align*}
\end{proof}

\begin{lemma}[Bounds on iterates]
\label{lemma:iterate-bound}
For any $t \ge 1$,
\begin{equation}
\label{equation:iterate-bound}
\norm{\bx_t} \le \max\left\{\norm{\bx^*} + \sqrt{(S_{t-1} + 2)B_{\phi_t}(\bx^*, \bx_t)}, \ 2 \norm{\bx^*}, \ 4(S_{t-1} + 2) B_{\phi_t}(\bx^*, \bx_t) \right\} \: .
\end{equation}
In particular,
\[
\sup_{t=1,2,\dots} \norm{\bx_t} < \infty \qquad \text{almost surely.}
\]
\end{lemma}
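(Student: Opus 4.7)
The plan is to feed Lemma~\ref{lemma:bregman-bounds}'s lower bound into a short case analysis on whether the intermediate point $\widetilde{x}_t$ is small or large, and then on whether $\norm{\bx_t}$ itself is comparable to $\norm{\bx^*}$ or much larger. By Lemma~\ref{lemma:bregman-bounds}, there exists $\widetilde{x}_t$ between $\norm{\bx^*}$ and $\norm{\bx_t}$ with
\[
\frac{\norm{\bx^*-\bx_t}^2}{S_{t-1}+2}\min\left\{1,\frac{1}{\widetilde{x}_t}\right\}\le B_{\phi_t}(\bx^*,\bx_t).
\]

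First I would dispose of the case $\widetilde{x}_t\le 1$. There the minimum equals $1$, so $\norm{\bx^*-\bx_t}^2\le (S_{t-1}+2)B_{\phi_t}(\bx^*,\bx_t)$, and the triangle inequality yields the first term of the maximum in \eqref{equation:iterate-bound}. When $\widetilde{x}_t>1$ the minimum equals $1/\widetilde{x}_t$, so
\[
\norm{\bx^*-\bx_t}^2\le (S_{t-1}+2)\,\widetilde{x}_t\,B_{\phi_t}(\bx^*,\bx_t).
\]
I then split on the size of $\norm{\bx_t}$. If $\norm{\bx_t}\le 2\norm{\bx^*}$ the second term of the maximum already bounds $\norm{\bx_t}$ and we are done. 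Otherwise $\norm{\bx_t}>2\norm{\bx^*}$, which forces $\norm{\bx^*-\bx_t}\ge \norm{\bx_t}-\norm{\bx^*}\ge \tfrac{1}{2}\norm{\bx_t}$; moreover $\widetilde{x}_t\le\max\{\norm{\bx^*},\norm{\bx_t}\}=\norm{\bx_t}$ because $\widetilde{x}_t$ lies between the two norms and $\norm{\bx^*}<\norm{\bx_t}$. Combining these gives
\[
\tfrac{1}{4}\norm{\bx_t}^2\le (S_{t-1}+2)\norm{\bx_t}\,B_{\phi_t}(\bx^*,\bx_t),
\]
and after cancelling one factor of $\norm{\bx_t}$ (which is positive in this subcase) we obtain the third term of the maximum. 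The three subcases are mutually exhaustive, which proves \eqref{equation:iterate-bound}.

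For the final ``In particular'' statement, I would combine three almost sure boundedness facts. By Lemma~\ref{lemma:limits}, under assumption \eqref{equation:learning-rates-assumption-1} the sequence $S_{t-1}$ is monotone and bounded by $S_\infty<\infty$ almost surely. By Lemma~\ref{lemma:bregman-convergence} (applicable here since we are in the variationally coherent setting) the sequence $B_{\phi_t}(\bx^*,\bx_t)$ converges to a finite random variable $B_\infty$ and is therefore almost surely bounded over $t$. Plugging these two uniform bounds into the right-hand side of \eqref{equation:iterate-bound}, each of the three expressions inside the maximum is almost surely bounded uniformly in $t$, so $\sup_t\norm{\bx_t}<\infty$ almost surely.

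The main obstacle is really just bookkeeping in the case analysis: keeping track of the fact that $\widetilde{x}_t$ sits between $\norm{\bx^*}$ and $\norm{\bx_t}$ (so in the ``large'' subcase we can replace $\widetilde{x}_t$ by $\norm{\bx_t}$ without losing control), and ensuring the triangle inequality step in the ``large'' subcase uses the strict gap $\norm{\bx_t}>2\norm{\bx^*}$ to convert $\norm{\bx^*-\bx_t}$ into a constant fraction of $\norm{\bx_t}$. Nothing here requires a new quantitative estimate; all inputs are available from Lemmas~\ref{lemma:bregman-bounds}, \ref{lemma:limits}, and \ref{lemma:bregman-convergence}.
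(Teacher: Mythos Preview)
Your proposal is correct and follows essentially the same approach as the paper: both invoke the lower bound from Lemma~\ref{lemma:bregman-bounds} and then do a three-way case split on $\widetilde{x}_t\le 1$, $\norm{\bx_t}\le 2\norm{\bx^*}$, and the remaining case, with the same triangle-inequality and cancellation manipulations; the ``in particular'' part likewise relies on Lemmas~\ref{lemma:limits} and~\ref{lemma:bregman-convergence} exactly as the paper does. The only cosmetic difference is the order in which the cases are taken.
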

\begin{proof}
Lemma~\ref{lemma:bregman-bounds} implies
\begin{equation}
\label{equation:iterate-bound-proof}
\norm{\bx^* - \bx_t}^2 \le (S_{t-1} + 2) \max\left\{ 1, \widetilde{x}_t \right\} B_{\phi_t}(\bx^*, \bx_t) \: .
\end{equation}
for some $\widetilde{x}_t$ between $\norm{\bx^*}$ and $\norm{\bx_t}$.

In order to prove \eqref{equation:iterate-bound}, we consider three cases.
If $\norm{\bx_t} \le 2 \norm{\bx^*}$ there is nothing to prove. If $\widetilde{x}_t \le 1$ then
$$
\norm{\bx_t} \le \norm{\bx^*} + \norm{\bx^* - \bx_t} \le \norm{\bx^*} + \sqrt{(S_\infty + 2) B_{\phi_t}(\bx^*, \bx_t)},
$$
where in the last step we use \eqref{equation:iterate-bound-proof}.
It remains to verify \eqref{equation:iterate-bound} when
$\norm{\bx_t} > 2 \norm{\bx^*}$ and $\widetilde{x}_t > 1$. We have
\[
\norm{\bx_t}
\le \frac{\norm{\bx_t}^2}{\widetilde{x}_t}
\le \frac{4(\norm{\bx_t} - \norm{\bx^*})^2}{\widetilde{x}_t}
\le \frac{4\norm{\bx_t - \bx^*}^2}{\widetilde{x}_t}
\le 4(S_\infty + 2) B_{\phi_t}(\bx^*, \bx_t),
\]
where in the last step we use \eqref{equation:iterate-bound-proof}.

Lemma~\ref{lemma:bregman-convergence} implies that $\sup_{t=1,2,\dots}
B_{\phi_t}(\bx^*, \bx_t) = B < \infty$. Lemma~\ref{lemma:limits}
implies that $S_{t-1} \le S_\infty < \infty$. Therefore, inequality
\eqref{equation:iterate-bound} implies that
$$
\sup_{t=1,2,\dots} \norm{\bx_t} \le \max\left\{\norm{\bx^*} + \sqrt{(S_\infty + 2)B}, \ 2 \norm{\bx^*}, \ 4(S_\infty + 2) B \right\} \: .
$$
\end{proof}

We are now ready to prove Lemma~\ref{lemma:iterates-squeezed}.
\begin{proof}[Proof of Lemma~\ref{lemma:iterates-squeezed}]
Let $\sup_{t=1,2,\dots} \norm{\bx_t} = X$. Lemma~\ref{lemma:iterate-bound} implies $X < \infty$
almost surely. Lemma~\ref{lemma:bregman-bounds} implies that
$$
\frac{\norm{\bx^* - \bx_t}^2}{S_\infty + 2} \min\left\{ 1, \frac{1}{X} \right\}
\le B_{\phi_t}(\bx^*, \bx_t)
\le 2 \norm{\bx^* - \bx_t}^2 (S_\infty^2 + Q_\infty)\exp(Q_\infty)  \: .
$$
The lemma follows by defining
\begin{align*}
C_1 & = \frac{1}{S_\infty + 2} \min\left\{ 1, \frac{1}{X} \right\} \: , \\
C_2 & = 2(S_\infty^2 + Q_\infty)\exp(Q_\infty) \: .
\end{align*}
\end{proof}

\pagebreak
\section{Proofs for Section~\ref{section:proof-convex-average-non-adaptive}}
\label{section:convex-case}

\begin{proof}[Proof of Lemma~\ref{lemma:bound-S-Q-psi-non-adaptive}]
The bound on $S_T$ follows from
\begin{align*}
S_T^2
& = 4 + \sum_{t=1}^T \norm{\bell_t}^2 
= 4 + \sum_{t=1}^T \eta_t^2 \norm{\bg_t}^2 
= 4 + \sum_{t=1}^T \frac{\norm{\bg_t}^2}{G^2 t^{2\alpha}} 
\le 4 + \sum_{t=1}^T t^{-2\alpha} \\
& = 5 + \sum_{t=2}^T t^{-2\alpha} 
\le 5 + \int_{1}^{T} x^{-2 \alpha} dx 
= 5 + \frac{1 - T^{1-2\alpha}}{2 \alpha-1} 
\le 5 + \frac{1}{2 \alpha-1},
\end{align*}
where in the first inequality we used $\norm{\bg_t} \le G$.
The bound on $Q_T \le 2 \ln S_T$ follows from Lemma~\ref{lemma:bound-Q}.
The bound on $\phi_{T+1}(\bu)$ follows from
\begin{align*}
\phi_T(\bu)
& = \psi(\norm{\bu}, S_{T-1}, Q_{T-1}) \\
& \leq S_{T-1} \norm{\bu} \left[ 2\ln(1 + 2 \norm{\bu}) + 3Q_{T-1} + 3S_{T-1} \right] & \text{(Lemma~\ref{lemma:psi-properties})} \\
& \le S_{T-1} \norm{\bu} \left[ 2\ln(1 + 2 \norm{\bu}) + 6 \ln S_{T-1} + 3S_{T-1} \right] \\
& \le S_{T-1} \norm{\bu} \left[ 2\ln(1 + 2 \norm{\bu}) + 9 S_{T-1} \right] \\
& \le \sqrt{5 + \frac{1}{2\alpha - 1}} \norm{\bu} \left[ 2\ln(1 + 2 \norm{\bu}) + 9 \sqrt{5 + \frac{1}{2\alpha -1}} \right] \: .
\end{align*}
\end{proof}

\pagebreak
\section{Proofs for Section~\ref{section:proof-convex-last-iterate}}
\label{section:last-iterate}

%In this section we prove Theorem~\ref{theorem:convex-last-iterate}. The proof
%relies on several lemmas.

%
\begin{proof}[Proof of Lemma~\ref{lemma:last-average}]
Let $S_k=\frac{1}{k} \sum_{t=T-k+1}^T \eta_t q_t$. We have
\[
\sum_{t=T-k}^T \eta_t (q_t - q_{T-k})
\ge \sum_{t=T-k}^T (\eta_t q_t - \eta_{T-k} q_{T-k})
= (k+1) S_{k+1} - \eta_{T-k} (k+1) q_{T-k} \: .
\]
That implies
\[
S_{k+1} - \eta_{T-k} q_{T-k} \le \frac{1}{k+1} \sum_{t=T-k}^T \eta_t (q_t - q_{T-k}) \: .
\]
From the definition of $S_k$ and the above inequality, we have
\[
k S_k
= (k+1) S_{k+1} - \eta_{T-k} q_{T-k}
= k S_{k+1} + S_{k+1} - \eta_{T-k} q_{T-k}
\le k S_{k+1} + \frac{1}{k+1} \sum_{t=T-k}^T \eta_t (q_t - q_{T-k}) \: .
\]
Therefore,
\[
S_{k} \le S_{k+1} + \frac{1}{k(k+1)} \sum_{t=T-k}^T \eta_t (q_t - q_{T-k}) \: .
\]
Unrolling the inequality we get
\[
\eta_T q_T = S_1 \le S_T + \sum_{k=1}^{T-1} \frac{1}{k(k+1)} \sum_{t=T-k}^T \eta_t (q_t - q_{T-k}) \: .
\]
Using the definition of $S_T$ the lemma follows.
\end{proof}

\begin{proof}[Proof of Lemma~\ref{lemma:difference-of-regularizers}]
Let $C_t=\frac{1}{2} \exp(\frac{1}{4} S_{t-1}^2 - Q_{t-1})$.
We will first show that
\begin{equation}
\label{equation:difference-of-regularizer-proof}
\phi_{T+1} (\bx_A) - \phi_A(\bx_A)
\le \max \left\{ C_{T+1}, \norm{\bx_A} \right\} (S_T^2 - S_{A-1}^2) \: .
\end{equation}

Now, we claim that $C_1, C_2, \dots$ is a non-decreasing sequence. Indeed,
$C_t \le C_{t+1}$ is equivalent to
\[
\frac{1}{4} S_{t-1}^2 - Q_{t-1} \le \frac{1}{4} S_t^2 - Q_t \: ,
\]
which is the same as
\[
\frac{1}{4} \norm{\bell_t}^2 - \frac{\norm{\bell_t}^2}{S_t^2} \ge 0 \: ,
\]
which trivially holds since $S_t^2 \ge 4$.

Second, observe that
\begin{equation}
\label{equation:Q-difference}
Q_T - Q_{A-1}
= \sum_{t=A}^{T} \frac{\norm{\bell_t}^2}{S_t^2}
\le \frac{1}{4} \sum_{t=A}^{T} \norm{\bell_t}^2
= \frac{1}{4} (S_T^2 - S_{A-1}^2) \: .
\end{equation}

We now prove \eqref{equation:difference-of-regularizer-proof} by considering three cases.

\textbf{Case $\norm{\bx_A} \le C_A$:}
\begin{align*}
\phi_{T+1} (\bx_A) - \phi_A(\bx_A)
& = \psi(\norm{\bx_A}, S_T, Q_T) - \psi(\norm{\bx_A},S_{A-1}, Q_{A-1}) \\
& \le \psi(C_A,S_T, Q_T) - \psi(C_A, S_{A-1}, Q_{A-1}) \\
& = \psi(C_A,S_T, Q_T) - C_A (S^2_{A-1}-2) \\
& = C_A \frac{\sqrt{2 S_T^2} (W(2 \exp(2Q_T) S_T^2 C_A^2)-1)}{\sqrt{W(2 \exp(2Q_T) S_T^2 C_A^2)}} - C_A (S^2_{A-1}-2) \\
& \le C_A \frac{\sqrt{2 S_T^2} (W(2 \exp(2Q_T) S_T^2 C_{T+1}^2)-1)}{\sqrt{W(2 \exp(2Q_T) S_T^2 C_{T+1}^2)}} - C_A (S^2_{A-1}-2) \\
& = C_A \frac{\sqrt{2 S_T^2} (\frac{1}{2} S^2_T-1)}{\sqrt{\frac12 S^2_T}} - C_A (S^2_{A-1}-2) \\
& = C_A (S^2_T-2) - C_A (S^2_{A-1}-2) \\
& = C_A (S^2_T - S^2_{A-1}) \\
& \le C_{T+1} (S^2_T - S^2_{A-1}) \: .
\end{align*}

\textbf{Case $C_A \le \norm{\bx_A} \le C_{T+1}$:}
\begin{align*}
\phi_{T+1} (\bx_A) - \phi_A(\bx_A)
& = \psi(\norm{\bx_A},S_{T}, Q_T) - \psi(\norm{\bx_A},S_{A-1}, Q_{A-1}) \\
& \le \psi(C_{T+1},S_{T}, Q_T) - \psi(C_{T+1},S_{A-1}, Q_{A-1}) \\
& = C_{T+1} \left(\frac{1}{2} S_T^2 + Q_T - \frac{1}{2} S_{A-1}^2 - Q_{A-1} \right)  \\
& \le C_{T+1} (S_T^2 - S_{A-1}^2) & \text{(using \eqref{equation:Q-difference})} \: .
\end{align*}

\textbf{Case 3: $\norm{\bx_A} \ge C_{T+1}$:}
\begin{align*}
\phi_{T+1} (\bx_A) - \phi_A(\bx_A)
& = \psi(\norm{\bx_A},S_{T}, Q_T) - \psi(\norm{\bx_A},S_{A-1}, Q_{A-1}) \\
& = \norm{\bx_A} \left(\frac{1}{2} S_{T}^2 + Q_{T} - \frac{1}{2} S^2_{A-1} - Q_{A-1}\right) \\
& \le \norm{\bx_A} (S_{T}^2 - S^2_{A-1}) & \text{(using \eqref{equation:Q-difference})} \: .
\end{align*}

Using the fact that $S_{T}^2 - S^2_{A-1} = \sum_{t=A}^T \|\bell_t\|^2$, \eqref{equation:difference-of-regularizer-proof} gives
\begin{align*}
\phi_{T+1} (\bx_A) - \phi_A(\bx_A)
&\le \max \left\{ \frac{1}{2} \exp\left(\frac{1}{4} S_T^2 - Q_T \right), \norm{\bx_A} \right\} \sum_{t=A}^T \|\bell_t\|^2 \\
&\le \max \left\{ \frac{1}{2} \exp\left(\frac{1}{4} S_T^2 - Q_T \right), \norm{\bx_A} \right\} \sum_{t=A}^T t^{-2\alpha},
\end{align*}
where we used $\norm{\bell_t} = \eta_t \norm{\bg_t} \le t^{-\alpha}$.
We take expectation to both sides of the above equation and define
\begin{equation}
\label{equation:definition-K}
\kappa = \sup_{\substack{T=0,1,2,\dots\\A=1,2,\dots}} \Exp\left[ \max \left\{ \frac{1}{2} \exp\left(\frac{1}{4} S_T^2 - Q_T \right), \norm{\bx_A} \right\} \right] \: ,
\end{equation}
to obtain
\[
\Exp\left[\phi_{T+1} (\bx_A) - \phi_A(\bx_A)\right]
\le \kappa \sum_{t=A}^T t^{-2\alpha} \: .
\]
We upper bound $\kappa$ as follows
\begin{align*}
\kappa
& = \sup_{\substack{T \ge 0\\ A\ge 1}} \Exp\left[ \max \left\{ \frac{1}{2} \exp\left(\frac{1}{4} S_T^2 - Q_T \right), \norm{\bx_A} \right\} \right] \\
& \le \sup_{A \ge 1} \Exp\left[ \max \left\{ \frac{1}{2} \exp\left(\frac{1}{4} S \right), \norm{\bx_A} \right\} \right] \\
& \le \sup_{A \ge 1} \frac{1}{2} \exp\left(\frac{1}{4} S \right) + \Exp\left[ \norm{\bx_A} \right] \\
& \le \sup_{A \ge 1} \frac{1}{2} \exp\left(\frac{1}{4} S \right) + \Exp\left[ \max\left\{\norm{\bx^*} + \sqrt{(S_{A-1} + 2)B_{\phi_A}(\bx^*, \bx_A)}, \ 2 \norm{\bx^*}, \ 4(S_{A-1} + 2) B_{\phi_A}(\bx^*, \bx_A) \right\} \right] \\
& \le \sup_{A \ge 1} \frac{1}{2} \exp\left(\frac{1}{4} S \right) + \Exp\left[ 3\norm{\bx^*} + \sqrt{(S_{A-1} + 2)B_{\phi_A}(\bx^*, \bx_A)} + 4(S_{A-1} + 2) B_{\phi_A}(\bx^*, \bx_A) \right] \\
& \le \sup_{A \ge 1} \frac{1}{2} \exp\left(\frac{1}{4} S \right) + \Exp\left[ 3\norm{\bx^*} + \sqrt{(S + 2)B_{\phi_A}(\bx^*, \bx_A)} + 4(S + 2) B_{\phi_A}(\bx^*, \bx_A) \right] \\
& \le \sup_{A \ge 1} \frac{1}{2} \exp\left(\frac{1}{4} S \right) + \sqrt{\Exp\left[(S + 2)B_{\phi_A}(\bx^*, \bx_A)\right]} + 3\norm{\bx^*} + \Exp\left[ 4(S + 2) B_{\phi_A}(\bx^*, \bx_A) \right] \\
& \le \sup_{A \ge 1} \frac{1}{2} \exp\left(\frac{1}{4} S \right) + \sqrt{(S+2) \Exp[1 + \phi_A(\bx^*)]} + 3\norm{\bx^*} + 4(S + 2) \Exp[1 + \phi_A(\bx^*)] \\
& \le \frac{1}{2} \exp\left(\frac{1}{4} S \right) + \sqrt{(S+2) \Exp[1 + \phi_\infty(\bx^*)]} + 3\norm{\bx^*} + 4(S + 2) \Exp[1 + \phi_\infty(\bx^*)] \\
& \le \frac{1}{2} \exp\left(\frac{1}{4} S \right) + (S+2) \sqrt{\Exp[1 + \phi_\infty(\bx^*)]} + 3\norm{\bx^*} + 4(S + 2) \Exp[1 + \phi_\infty(\bx^*)] \\
& \le \frac{1}{2} \exp\left(\frac{1}{4} S \right) + (S+2) \Exp[2 + \phi_\infty(\bx^*)] + 3\norm{\bx^*} + 4(S + 2) \Exp[2 + \phi_\infty(\bx^*)] \\
& = \frac{1}{2} \exp\left(\frac{1}{4} S \right) + 3\norm{\bx^*} + 5(S + 2) \Exp[2 + \phi_\infty(\bx^*)],
\end{align*}
where we used inequality $Q_T \ge 0$,
Lemma~\ref{lemma:bound-S-Q-psi-non-adaptive} that states $S_t \le S$ for all $t
\ge 0$, Lemma~\ref{lemma:iterate-bound} to upper bound $\norm{\bx_A}$,
inequality $\Exp[\sqrt{\cdot}] \le \sqrt{\Exp[\cdot]}$,
Lemma~\ref{lemma:regret-bound} to upper bound $\Exp[B_{\phi_A}(\bx^*, \bx_A)]$,
and inequality $\sqrt{x} \le 1 + x$ valid for any $x \ge 0$.
\end{proof}

\begin{proof}[Proof of Lemma~\ref{lemma:bound-sum-k}]
We upper bound the sum $\sum_{k=1}^{T-1} \frac{1}{k(k+1)} \sum_{t=T-k}^T t^{-2\alpha}$ as follows.
\begin{align*}
\sum_{k=1}^{T-1} \frac{1}{k(k+1)} \sum_{t=T-k}^T t^{-2\alpha}
& = \sum_{k=1}^{T-1} \sum_{t=T-k}^T t^{-2\alpha} \frac{1}{k(k+1)} \\
& = \sum_{k=1}^{T-1} T^{-2\alpha} \frac{1}{k(k+1)} + \sum_{t=1}^{T-1} \sum_{k=T-t}^{T-1} t^{-2\alpha} \frac{1}{k(k+1)} \\
& = T^{-2\alpha} \sum_{k=1}^{T-1} \left( \frac{1}{k} - \frac{1}{k+1} \right) + \sum_{t=1}^{T-1} t^{-2\alpha} \sum_{k=T-t}^{T-1} \left( \frac{1}{k} - \frac{1}{k+1} \right)  \\
& = T^{-2\alpha} \left( 1 - \frac{1}{T} \right) + \sum_{t=1}^{T-1} t^{-2\alpha} \left( \frac{1}{T-t} - \frac{1}{T} \right)  \\
& = T^{-2\alpha} \left( 1 - \frac{1}{T} \right) + \sum_{t=1}^{T-1} t^{-2\alpha} \frac{t}{T(T-t)} \\
& \le T^{-2\alpha} + \sum_{t=1}^{T-1} \frac{t^{1-2\alpha}}{T(T-t)} \: .
\end{align*}
Since $\alpha > \frac{1}{2}$, the function $t \mapsto t^{1-2\alpha}$ is convex on the interval $(0,\infty)$.
We can upper bound $t^{1-2\alpha}$ on the interval $[1,T]$ with a linear function. That is,
\begin{align*}
t^{1-2\alpha} & \le \frac{T-t}{T-1} + \frac{t-1}{T-1} T^{1-2\alpha} & \text{for $t \in [1,T]$} \: .
\end{align*}
Hence,
\begin{align*}
\sum_{k=1}^{T-1} \frac{1}{k(k+1)} \sum_{t=T-k}^T t^{-2\alpha}
& \le T^{-2\alpha} + \sum_{t=1}^{T-1} \frac{t^{1-2\alpha}}{T(T-t)}
 \le T^{-2\alpha} + \sum_{t=1}^{T-1} \left(\frac{1}{T (T-1)} + \frac{(t-1)T^{-2\alpha}}{(T-1)(T-t)}\right) \\
& = T^{-2\alpha} + \frac{1}{T} + \frac{T^{-2\alpha}}{(T-1)} \sum_{t=1}^{T-1} \frac{t-1}{T-t}
 = T^{-2\alpha} + \frac{1}{T} + \frac{T^{-2\alpha}}{(T-1)} \sum_{t=1}^{T-1} \left(\frac{T-1}{T-t} - 1 \right) \\
& = \frac{1}{T} + \frac{T^{-2\alpha}}{(T-1)} \sum_{t=1}^{T-1} \frac{T-1}{T-t}
 = \frac{1}{T} + T^{-2\alpha} \sum_{t=1}^{T-1} \frac{1}{T-t}
 = \frac{1}{T} + T^{-2\alpha} \sum_{t=1}^{T-1} \frac{1}{t} \\
& \le \frac{1}{T} + \frac{1 + \ln(T-1)}{T^{2\alpha}}
 \le \frac{1}{T} + T^{-2\alpha} + \frac{\ln T}{T^{2\alpha}}
 \le \frac{1}{T} + T^{-2\alpha} + \frac{1}{e(2\alpha - 1)T},
\end{align*}
where in the last step we used that $\ln x \le \frac{x^p}{e p}$ for all $x > 0$ and all $p > 0$ with $p=2\alpha-1$ and $x = T$.
\end{proof}

\begin{proof}[Proof of Lemma~\ref{lemma:partial-regret}]
For any $\bu \in \R^d$ and any $A \le T$,
\begin{align*}
- \sum_{t=A}^T \ip{\bell_t}{\bu}
& = \phi_{T+1}(\bu) - H_{T+1}(\bu) + \sum_{t=1}^{A-1} \langle \bell_t, \bu\rangle \\
& = \phi_{T+1}(\bu) - H_A(\bx_A) + H_A(\bx_A) - H_{T+1}(\bx_{T+1}) + H_{T+1}(\bx_{T+1}) - H_{T+1}(\bu) + \sum_{t=1}^{A-1} \ip{\bell_t}{\bu} \\
& = \phi_{T+1}(\bu) - H_A(\bx_A) + \sum_{t=A}^T \left[ H_t(\bx_t) - H_{t+1}(\bx_{t+1})\right] + H_{T+1}(\bx_{T+1}) - H_{T+1}(\bu) + \sum_{t=1}^{A-1} \ip{\bell_t}{\bu} \: .
\end{align*}
Adding $\sum_{t=A}^T \ip{\bell_t}{\bx_t}$ to both sides, we get
\begin{multline*}
\sum_{t=A}^T \ip{\bell_t}{\bx_t - \bu}
\le \phi_{T+1} (\bu) - H_A(\bx_A) + H_{T+1}(\bx_{T+1}) - H_{T+1}(\bu) \\ + \sum_{t=A}^T \left[ H_t(\bx_t) - H_{t+1}(\bx_{t+1}) + \ip{\bell_t}{\bx_t} \right] + \sum_{t=1}^{A-1} \ip{\bell_t}{\bu} \: .
\end{multline*}
Lemma~\ref{lemma:key-inequality} implies that
\begin{align*}
\sum_{t=A}^T \ip{\bell_t}{\bx_t - \bu} \le \phi_{T+1} (\bu) - H_A(\bx_A) + H_{T+1}(\bx_{T+1}) - H_{T+1}(\bu) + \sum_{t=1}^{A-1} \langle \bell_t, \bu\rangle \: .
\end{align*}
Set $\bu=\bx_A$, to obtain
\begin{align*}
\sum_{t=A}^T \ip{\bell_t}{\bx_t - \bx_A}
& \le \phi_{T+1} (\bx_A) - H_A(\bx_A) + H_{T+1}(\bx_{T+1}) - H_{T+1}(\bx_A) + \sum_{t=1}^{A-1} \langle \bell_t, \bx_A\rangle \\
& = \phi_{T+1} (\bx_A) - \phi_A(\bx_A) + H_{T+1}(\bx_{T+1}) - H_{T+1}(\bx_A) \\
& \le \phi_{T+1} (\bx_A) - \phi_A(\bx_A) \: .
\end{align*}
\end{proof}

\pagebreak
\section{Adaptive Learning Rate}
\label{section:adaptive-learning-rate}

\begin{lemma}[Bound on $S_\infty$ for adaptive learning rate]
\label{lemma:bound-S-adaptive}
Let $\alpha \in (\frac{1}{2}, 1)$. If $\eta_t = \frac{G^{2\alpha-1}}{\left(2G^2 + \sum_{i=1}^{t-1} \norm{\bg_i}^2 \right)^\alpha}$
then
$$
S_\infty \le \sqrt{4  + \frac{1}{2\alpha-1}} \: .
$$
\end{lemma}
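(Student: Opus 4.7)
The plan is to convert $S_T^2 - 4 = \sum_{t=1}^T \eta_t^2\norm{\bg_t}^2 = G^{4\alpha-2}\sum_{t=1}^T \norm{\bg_t}^2/V_t^{2\alpha}$, where $V_t := 2G^2 + \sum_{i<t}\norm{\bg_i}^2$, into an integral and exploit the fact that consecutive values of $V_t$ differ by at most $G^2$.

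The first step will be the elementary identity $\norm{\bg_t}^2/V_t^{2\alpha} = \int_{V_t}^{V_{t+1}} V_t^{-2\alpha}\,ds$, which holds because $V_{t+1} - V_t = \norm{\bg_t}^2$. Since $V_1 \le V_2 \le \cdots$, the intervals $[V_t, V_{t+1})$ partition $[V_1, V_{T+1})$, and the full sum equals $\int_{V_1}^{V_{T+1}} \tilde V(s)^{-2\alpha}\,ds$, where $\tilde V$ is the non-decreasing step function with $\tilde V(s) = V_t$ on $[V_t, V_{t+1})$.

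The key observation will then be that, for $s \in [V_t, V_{t+1})$, the bound $\norm{\bg_t}^2 \le G^2$ gives $\tilde V(s) = V_t \ge V_{t+1} - G^2 > s - G^2$. Since $V_1 = 2G^2$ and hence $s \ge 2G^2 > G^2$ throughout, the shift $u = s - G^2$ is legitimate and produces $\int_{V_1}^{V_{T+1}} \tilde V(s)^{-2\alpha}\,ds \le \int_{G^2}^{\infty} u^{-2\alpha}\,du = G^{2-4\alpha}/(2\alpha-1)$. Multiplying by $G^{4\alpha-2}$ gives $S_T^2 - 4 \le 1/(2\alpha-1)$ uniformly in $T$, from which the claim follows by letting $T \to \infty$ and using that $S_T$ is non-decreasing.

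The main delicacy will be choosing this integral representation in the first place: the natural telescoping bound from concavity of $x \mapsto -x^{1-2\alpha}$ only controls $\sum_t \norm{\bg_t}^2/V_{t+1}^{2\alpha}$, and shifting back to $V_t^{2\alpha}$ costs an extra factor $(3/2)^{2\alpha}$ in the worst case and thus fails to produce a pure $1/(2\alpha-1)$ bound. The integral-plus-shift approach avoids this loss entirely, and the initial value $V_1 = 2G^2$ is evidently tuned so that $V_1 - G^2 = G^2$, which is exactly what makes the powers of $G$ cancel cleanly in the final answer.
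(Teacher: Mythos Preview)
Your proof is correct and follows essentially the same route as the paper's. The paper first uses $\norm{\bg_t}^2\le G^2$ to shift the denominator from $2G^2+\sum_{i<t}\norm{\bg_i}^2$ to $G^2+\sum_{i\le t}\norm{\bg_i}^2$, and then invokes a separate ``useful inequality'' lemma (the standard comparison $\sum_t a_t f(a_0+\sum_{i\le t}a_i)\le\int_{a_0}^{\sum a_t}f$ for non-increasing $f$) to pass to the integral $\int_{G^2}^{\infty}u^{-2\alpha}\,du$; you perform the same two steps in the opposite order, writing the sum as an integral of a step function first and then shifting $s\mapsto s-G^2$ inside the integral, which is exactly the inline proof of that lemma together with the same shift.
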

\begin{proof}
For any $T \ge 1$,
\begin{align*}
S_T^2
& = 4 + \sum_{t=1}^T \norm{\bell_t}^2 \\
& = 4 + \sum_{t=1}^T \eta_t^2 \norm{\bg_t}^2 \\
& = 4 + G^{4\alpha-2} \sum_{t=1}^T \frac{\norm{\bg_t}^2}{(2G^2 + \sum_{i=1}^{t-1} \norm{\bg_i}^2)^{2\alpha}} \\
& \le 4 + G^{4 \alpha-2}  \sum_{t=1}^T \frac{\norm{\bg_t}^2}{(G^2 + \sum_{i=1}^{t} \norm{\bg_i}^2)^{2\alpha}} & \text{(since $\norm{\bg_t} \le G$)} \\
& \le 4 + G^{4 \alpha-2} \int_{G^2}^{G^2 + \sum_{t=1}^T \norm{\bg_t}^2} x^{-2 \alpha} dx  & \text{(Lemma~\ref{lemma:useful-inequality} with $f(x)=x^{-2\alpha}$, $a_0 = G^2$, $a_t = \norm{\bg_t}^2$)} \\
& = 4 + G^{4 \alpha-2} \frac{G^{2 - 4\alpha} - (G^2 + \sum_{t=j}^T \norm{\bg_t}^2)^{1-2\alpha}}{2 \alpha-1} \\
& \le 4 + \frac{1}{2 \alpha-1} \: .
\end{align*}
The lemma follows by taking limit $T \to \infty$.
\end{proof}

\begin{theorem}[Convergence rate for adaptive learning rate]
\label{theorem:convex-adaptive}
Let $F:\R^d \to \R$ be an $L$-smooth convex function with a minimizer $\bx^*$.
Suppose the stochastic gradients satisfy \eqref{equation:stochastic-gradient},
\eqref{equation:gradient-bound} and  $\Exp[\norm{\nabla F(\bx_t) - \bg_t}^2 ~|~ \mathcal{F}_t]\leq \sigma^2$.
Algorithm~\ref{algorithm:ftrl-rescaled-gradients-exponetial-potential} with learning rate
\[
\eta_t = \frac{G^{2\alpha-1}}{\left(2G^2 + \sum_{i=1}^{t-1} \norm{\bg_i}^2 \right)^\alpha} \: ,
\]
where $\alpha \in (\frac{1}{2}, 1)$ satisfies for all $T \ge 1$,
\begin{align*}
\Exp\left[ (F(\overline{\bx}_T) -F(\bx^*))^{1-\alpha}\right]
\le \frac{1}{T^{1-\alpha}} \max & \left\{ 2^\alpha G^{(1-2\alpha)(1-\alpha)}\left(1 + \phi_{\infty}(\bx^*-\bx_0) \right)^{1-\alpha} (2 G^2 + 2(T-1)\sigma^2)^{\alpha (1-\alpha)},\right.\\
&\quad \left.G^{1-2\alpha} 2^\frac{\alpha}{1-\alpha}\left(1 + \phi_{\infty}(\bx^*-\bx_0) \right) (4L)^{\alpha}\right\}\: .
\end{align*}
where
\[
\overline{\bx}_T = \frac{1}{T} \sum_{t=1}^T \bx_t \: .
\]
\end{theorem}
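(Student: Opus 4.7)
The plan is to parallel the proof of Theorem~\ref{theorem:convex-average-non-adaptive}, carefully accommodating the fact that $\eta_T$ is now itself a random variable, namely $\eta_T = G^{2\alpha-1}/U^\alpha$ with $U := 2G^2 + \sum_{i=1}^{T-1} \norm{\bg_i}^2$. I would start from Lemma~\ref{lemma:regret-bound} specialized to $\bu = \bx^*$ and combined with convexity: $\sum_t \Exp[\eta_t(F(\bx_t) - F(\bx^*))] \le 1 + \Exp[\phi_\infty(\bx^*)]$. Since the sequence $\eta_t$ is pathwise non-increasing and each $F(\bx_t) - F(\bx^*) \ge 0$, I can pull out $\eta_T$, then apply Jensen's inequality $T R_T \le \sum_t (F(\bx_t) - F(\bx^*))$, with $R_T := F(\overline{\bx}_T) - F(\bx^*)$, to reach $\Exp[T \eta_T R_T] \le 1 + \Exp[\phi_\infty(\bx^*)]$, i.e., $\Exp[R_T/U^\alpha] \le (1 + \Exp[\phi_\infty(\bx^*)])/(T G^{2\alpha - 1})$.

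The appearance of $R_T^{1-\alpha}$ on the left-hand side of the theorem is an artifact of a Hölder decoupling. Writing $R_T^{1-\alpha} = (R_T/U^\alpha)^{1-\alpha}\, U^{\alpha(1-\alpha)}$ and applying Hölder with conjugate exponents $1/(1-\alpha)$ and $1/\alpha$, followed by Jensen for the concave map $x \mapsto x^{1-\alpha}$, yields
\[
\Exp[R_T^{1-\alpha}] \le \bigl(\Exp[R_T/U^\alpha]\bigr)^{1-\alpha}\, \bigl(\Exp[U]\bigr)^{\alpha(1-\alpha)}.
\]
The first factor is already under control from the previous step, so the whole problem reduces to bounding $\Exp[U]$. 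Here smoothness enters: convexity together with $L$-smoothness gives $\norm{\grad F(\bx_t)}^2 \le 2L(F(\bx_t) - F(\bx^*))$, so the variance hypothesis forces $\Exp[\norm{\bg_t}^2 \mid \mathcal{F}_t] \le 2L(F(\bx_t) - F(\bx^*)) + \sigma^2$. Summing and recycling the regret bound to control $\sum_t (F(\bx_t) - F(\bx^*)) \le (1+\phi_\infty(\bx^*))\, U^\alpha/G^{2\alpha-1}$, together with Jensen's $\Exp[U^\alpha] \le (\Exp[U])^\alpha$ and the deterministic upper bound on $\phi_\infty(\bx^*)$ supplied by Lemma~\ref{lemma:bound-S-adaptive} combined with the explicit formula in Lemma~\ref{lemma:psi-properties}, produces a self-referential inequality
\[
\Exp[U] \le A + B\, (\Exp[U])^\alpha, \qquad A := 2G^2 + (T-1)\sigma^2,\ \ B := \frac{2L\,(1+\Exp[\phi_\infty(\bx^*)])}{G^{2\alpha-1}}.
\]

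This recursion is resolved by the standard case split: either the affine term dominates, in which case $\Exp[U] \le 2A$, or the nonlinear term dominates, in which case $\Exp[U] \le (2B)^{1/(1-\alpha)}$. Plugging each of these two bounds into the Hölder estimate above and tidying up the exponents of $G$ and the numerical constants produces exactly the two expressions appearing inside the $\max$. The main obstacle I anticipate is precisely this self-referential inequality on $\Exp[U]$, whose two regimes explain the $\max$ structure in the conclusion; the need for the Hölder decoupling, which is what forces the theorem to bound $\Exp[R_T^{1-\alpha}]$ rather than $\Exp[R_T]$, is also a subtle but essential structural point without which the argument cannot be closed in the stochastic case.
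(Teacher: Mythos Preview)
Your overall architecture—H\"older decoupling to produce the $(1-\alpha)$-th moment, followed by a self-referential inequality resolved by a two-regime case split—matches the paper. The gap is in \emph{which} quantity you make self-referential and in how you justify the feedback step.

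You write that ``recycling the regret bound'' yields the pathwise estimate $\sum_t (F(\bx_t)-F(\bx^*)) \le (1+\phi_\infty(\bx^*))\,U^\alpha/G^{2\alpha-1}$. This is not available. The FTRL bound from Lemma~\ref{lemma:ftrl-equality} and Lemma~\ref{lemma:key-inequality} holds pathwise only for $\sum_t \eta_t\langle \bg_t,\bx_t-\bx^*\rangle$; replacing $\langle \bg_t,\bx_t-\bx^*\rangle$ by $F(\bx_t)-F(\bx^*)$ uses both convexity and the unbiasedness of $\bg_t$, which is valid only after conditioning and taking expectation. What the regret analysis actually delivers is $\Exp[\eta_T\Delta]=G^{2\alpha-1}\Exp[\Delta/U^\alpha]\le 1+\Exp[\phi_\infty(\bx^*)]$, and there is no general passage from a bound on $\Exp[\Delta/U^\alpha]$ to a bound on $\Exp[\Delta]$ in terms of $(\Exp[U])^\alpha$. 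Consequently your recursion $\Exp[U]\le A+B(\Exp[U])^\alpha$ is not established, and the argument does not close.

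The paper sidesteps this by making the self-referential quantity $\Exp[\Delta^{1-\alpha}]$ rather than $\Exp[U]$. After the same H\"older step (phrased as $\Exp[\eta_T\Delta]\ge \Exp[\Delta^{1-\alpha}]^{1/(1-\alpha)}/\Exp[(1/\eta_T)^{(1-\alpha)/\alpha}]^{\alpha/(1-\alpha)}$), one has to bound $\Exp[(1/\eta_T)^{(1-\alpha)/\alpha}]\propto\Exp[U^{1-\alpha}]$. The paper splits $U$ \emph{pathwise} via $\norm{\bg_t}^2\le 2\norm{\bg_t-\nabla F(\bx_t)}^2+2\norm{\nabla F(\bx_t)}^2$, uses subadditivity of $x\mapsto x^{1-\alpha}$, Jensen on the noise term, and the \emph{pathwise} smoothness inequality $\sum_t\norm{\nabla F(\bx_t)}^2\le 2L\Delta$ on the gradient term. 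This last step produces exactly $(4L)^{1-\alpha}\Exp[\Delta^{1-\alpha}]$ on the right, so the recursion is on $\Exp[\Delta^{1-\alpha}]$ and closes without ever needing $\Exp[\Delta]$. The case split is then performed on $\Exp[\Delta^{1-\alpha}]$, and Jensen $F(\overline{\bx}_T)-F(\bx^*)\le\Delta/T$ is applied at the very end. If you reorganize your argument to work with $\Exp[\Delta^{1-\alpha}]$ and to carry out the noise/gradient split inside the $(1-\alpha)$-power \emph{before} taking expectation, the rest of your plan goes through.
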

\begin{proof}
Lemma~\ref{lemma:regret-bound} states that for any $\bu \in \R^d$,
\[
\sum_{t=1}^T \Exp\left[ \eta_t \ip{\grad F(\bx_t)}{\bx_t - \bu} \right] \le \Exp\left[1 + \phi_{\infty}(\bu) \right] \: .
\]
Since $F$ is convex, $F(\bx_t) - F(\bu) \le \ip{\grad F(\bx_t)}{\bx_t - \bu}$ and therefore
\[
\sum_{t=1}^T  \Exp\left[ \eta_t (F(\bx_t) - F(\bu)) \right] \le \Exp\left[1 + \phi_{\infty}(\bu) \right] \: .
\]
Substituting $\bx^*$ for $\bu$, we have
\[
\sum_{t=1}^T  \Exp\left[ \eta_t (F(\bx_t) - F(\bx^*)) \right] \le \Exp\left[1 + \phi_{\infty}(\bx^*) \right] \: .
\]
Now observe that H\"{o}lder's inequality implies that $\Exp[B^p] \ge
\frac{\Exp[AB]^p}{\Exp[A^q]^{p/q}}$ for all $A,B$ non-negative random variables,
$p,q \ge 1$, and $\frac{1}{p} + \frac{1}{q} = 1$. Using it with $B=\left(\eta_T
\left(\sum_{t=1}^T (F(\bx_t) - F(\bx^*))\right)\right)^{1-\alpha}$ and
$A=\eta_T^{\alpha-1}$ and using the fact that the learning rates are decreasing
and $F(\bx_t) - F(\bx^*)$ are non-negative, we have
\[
\Exp\left[ 1 + \phi_{\infty}(\bx^*) \right]
\ge \Exp\left[\eta_T \sum_{t=1}^T  \left(F(\bx_t) - F(\bx^*)\right) \right]
\ge \frac{\Exp\left[\left(\sum_{t=1}^T  \left(F(\bx_t) - F(\bx^*)\right) \right)^{1-\alpha} \right]^\frac{1}{1-\alpha} }{\Exp\left[ \left(\frac{1}{\eta_T}\right)^\frac{1-\alpha}{\alpha}\right]^\frac{\alpha}{1-\alpha}}
 \: .
\]
Now, observe that
\begin{align*}
\Exp\left[ \left(\frac{1}{\eta_T}\right)^\frac{1-\alpha}{\alpha}\right]
& = G^{(1-2\alpha)(1-\alpha)/\alpha}\Exp\left[\left(2G^2+\sum_{t=1}^{T-1} \norm{\bg_t}^2 \right)^{1-\alpha}\right] \\
& \le G^{(1-2\alpha)(1-\alpha)/\alpha}\Exp\left[\left(2G^2+2\sum_{t=1}^{T-1} (\norm{\nabla F(\bx_t)-\bg_t}^2 + \norm{\nabla F(\bx_t)}^2)\right)^{1-\alpha}\right] \\
& \le G^{(1-2\alpha)(1-\alpha)/\alpha}\left(2G^2+2(T-1) \sigma^2\right)^{1-\alpha}+ G^{(1-2\alpha)(1-\alpha)/\alpha}\Exp\left[\left(2\sum_{t=1}^{T-1}\|\nabla F(\bx_t)\|^2\right)^{1-\alpha}\right] \\
& \le G^{(1-2\alpha)(1-\alpha)/\alpha}\left(2G^2+2(T-1) \sigma^2\right)^{1-\alpha}+ G^{(1-2\alpha)(1-\alpha)/\alpha}\Exp\left[\left(2\sum_{t=1}^{T}\|\nabla F(\bx_t)\|^2\right)^{1-\alpha}\right] \\
& \le G^{(2\alpha-1)(1-\alpha)/\alpha}\left(2G^2+2(T-1) \sigma^2\right)^{1-\alpha}+ G^{(1-2\alpha)(1-\alpha)/\alpha}\Exp\left[\left(4L\sum_{t=1}^{T}(F(\bx_t)-F(\bx^*))\right)^{1-\alpha}\right] \: .
\end{align*}
Putting all together and denoting by $\Delta=\sum_{t=1}^{T}(F(\bx_t)-F(\bx^*))$, we have
\[
\Exp\left[\Delta^{1-\alpha}\right]^\frac{1}{\alpha}
\le G^{(1-2\alpha)(1-\alpha)/\alpha}\left( \Exp \left[1 + \phi_{\infty}(\bu) \right] \right)^\frac{1-\alpha}{\alpha} \left((2G^2+2(T-1) \sigma^2)^{1-\alpha}+ (4L)^{1-\alpha}\Exp\left[\Delta^{1-\alpha}\right]\right)\: .
\]
With a case analysis, we have
\begin{align*}
\Exp\left[\Delta^{1-\alpha}\right]
\le \max&\left( 2^\alpha G^{(1-2\alpha)(1-\alpha)} \left(\Exp\left[1 + \phi_{\infty}(\bu) \right] \right)^{1-\alpha} (2 G^2 + 2(T-1)\sigma^2)^{\alpha (1-\alpha)},\right.\\
&\quad \left.G^{1-2\alpha} 2^\frac{\alpha}{1-\alpha} \Exp\left[1 + \phi_{\infty}(\bu) \right] (4L)^{\alpha}\right)\: .
\end{align*}
Jensen's inequality implies that $F(\overline{\bx}_T) \le \frac{1}{T} \sum_{t=1}^T  F(\bx_t)$, that gives the final bound.
\end{proof}

\end{document}